\documentclass[11pt,a4paper,reqno]{amsart}
\usepackage{aliascnt}
\usepackage{hyperref}
\usepackage{amsfonts}
\usepackage{mathrsfs}
\usepackage{amsmath}
\usepackage{amsmath,graphics}
\usepackage{amssymb}
\usepackage{indentfirst,latexsym,bm,amsmath,amssymb,amsthm,graphicx,fancyhdr,float,color}
\IfFileExists{pstricks.sty}{\usepackage{pstricks}}{}
\usepackage[all]{xy}
\usepackage{amsthm}
\usepackage{amscd}
\usepackage[all]{hypcap}

\usepackage{enumitem,array,needspace}
\usepackage{xcolor}
\newcommand{\correction}[1]{#1}

\newcommand{\mathcorrection}[1]{#1}
\usepackage{hyperref}
\numberwithin{equation}{section}

\newtheorem{theorem}{Theorem}[section]

\newaliascnt{lemma}{theorem}
\newtheorem{lemma}[lemma]{Lemma}
\aliascntresetthe{lemma}

\newaliascnt{conjecture}{theorem}

\aliascntresetthe{conjecture}

\newaliascnt{proposition}{theorem}
\newtheorem{proposition}[proposition]{Proposition}
\aliascntresetthe{proposition}

\newaliascnt{corollary}{theorem}
\newtheorem{corollary}[corollary]{Corollary}
\aliascntresetthe{corollary}

\newaliascnt{problem}{theorem}

\aliascntresetthe{problem}

\newaliascnt{question}{theorem}

\aliascntresetthe{question}

\newaliascnt{claim}{theorem}

\aliascntresetthe{claim}

\theoremstyle{definition}

\newaliascnt{definition}{theorem}

\aliascntresetthe{definition}

\newaliascnt{example}{theorem}
\newtheorem{example}[example]{Example}
\aliascntresetthe{example}

\theoremstyle{remark}

\newaliascnt{remark}{theorem}
\newtheorem{remark}[remark]{Remark}
\aliascntresetthe{remark}

\newaliascnt{remarks}{theorem}

\aliascntresetthe{remarks}

\newcommand{\F}{\mathcal F}
\newcommand{\G}{\mathcal G}
\newcommand{\OO}{\mathcal O}
\newcommand{\PP}{\mathbb P}
\newcommand{\CC}{\mathbb C}
\newcommand{\ZZ}{\mathbb Z}
\newcommand{\QQ}{\mathbb Q}
\newcommand{\vol}{\operatorname{vol}}
\newcommand{\Alb}{\operatorname{Alb}}
\newcommand{\Supp}{\operatorname{Supp}}
\newcommand{\Sing}{\operatorname{Sing}}
\newcommand{\tang}{\operatorname{tang}}
\newcommand{\CS}{\operatorname{CS}}
\newcommand{\lcm}{\operatorname{lcm}}
\newcommand{\Kod}{\operatorname{Kod}}
\setlist[enumerate]{label=\textup{(\roman*)},leftmargin=2.2em,itemsep=2pt,topsep=4pt}
\def\Alb{\mathrm{Alb}}

\begin{document}

	\title{The volume of foliated surfaces with geometric genus one}	
	
	\author{Zhiming Guo}
	
	\address{Shanghai Center for Mathematical Sciences,
  Fudan University, Shanghai 200438, China}
	
	\email{z\_guo@fudan.edu.cn}
	
		\author{Bokai Hu}
	
	\address{School of Mathematical Sciences,  Key Laboratory of MEA (Ministry of Education) \& Shanghai Key Laboratory of PMMP,  East China Normal University, Shanghai 200241, China}
	
	\email{52280155002@stu.ecnu.edu.cn}

	\author{Xin L\"u}
	
	\address{School of Mathematical Sciences,  Key Laboratory of MEA (Ministry of Education) \& Shanghai Key Laboratory of PMMP,  East China Normal University, Shanghai 200241, China}
	
	\email{xlv@math.ecnu.edu.cn}
	
	\thanks{This work is supported by the National Natural Science Foundation of China, Scientific Research
		Innovation Capability Support Project for Young Faculty (No. SRICSPYF-ZY2025161),
		Shanghai Pilot Program for Basic Research (No. TQ20240202), Science and Technology Commission
		of Shanghai Municipality (No. 22DZ2229014).}
	
	\subjclass[2020]{14J29,  14D06, 32S65, 14E05}
	\keywords{Foliation, volume, geometric genus, Albanese fibration, geography}
	
	
	
	
\begin{abstract}
We prove the sharp volume bound $\vol(\F)\ge1/6$ for complex foliated surfaces of general type with positive geometric genus. We also study the equality case and apply the bound to the algebraic integrability and finiteness of small volumes on irregular surfaces.
\end{abstract}
\maketitle

\section{Introduction}\label{sec:intro}
We work over the complex number field $\mathbb{C}$.
The main purpose of the paper is to establish a sharp lower bound on the volume of a foliation of general type with geometric genus one.

Let $(S,\F)$ be a foliated surface; namely $S$ is a smooth projective surface and $\F$ is determined by a saturated invertible subsheaf $T_{\F}\subset T_S$ closed under the Lie bracket.
The geometric genus and volume of $\F$ are
the birational invariants
\[
 p_g(\F)=\dim H^0(X,K_{\G}),\qquad
 \vol(\F)=\limsup_{m\to\infty}
 \frac{\dim H^0(X,mK_{\G})}{m^2/2},
\]
computed on any reduced model $(X,\G)$ of $(S,\F)$
\cite[Section~2.3]{LT24}. They do not depend on the choice of the
reduced model. The foliation is of general type if $\vol(\F)>0$.
We refer to \autoref{sec:chains} for the definitions and basic facts about foliated surfaces.

A basic question in the classification of foliated surfaces of general type is whether the volumes of foliations of general type
admit a uniform positive lower bound;
see \cite[Question~4]{HL21} and \cite[\S\,3]{cas21}.
For algebraically integrable foliations, the index
bound of Pereira--Svaldi \cite[Proposition~5.3]{PS19} gives a positive
lower bound depending on the genus of a general leaf. The existence of
a lower bound independent of the leaf genus, and of a uniform lower
bound for arbitrary foliations of general type, remains open.

L\"u--Tan \cite[Theorem~1.4]{LT24} established the Noether-type
inequalities for foliated surfaces of general type, based on the canonical maps.
As an application, they proved the following sharp lower bound \cite[Corollary~1.5 and Remark~1.6]{LT24}:
\begin{equation}\label{eq:LT-intro}
 \vol(\F)\ge\frac12,\qquad\qquad\text{if~}p_g(\F)\ge2.
\end{equation}

The case $p_g(\F)\le1$ requires a different argument, since there is no nonconstant canonical map. As in the study of volumes of varieties \cite{CC08,CC15,CHP21}, different methods are needed.
Our main result settles the problem in the case where $p_g(\F)=1$, 
and as a consequence gives a uniform positive lower bound for foliated surfaces of general type with positive geometric genus:
 
\begin{theorem}\label{thm:lower-bound}
Let $(S,\F)$ be a foliated surface of general type with $p_g(\F)>0$. Then
\begin{equation*}
 \vol(\F)\ge\frac16.
\end{equation*}
\end{theorem}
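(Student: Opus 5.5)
By \eqref{eq:LT-intro} it suffices to treat $p_g(\F)=1$. I will pass to a relatively minimal reduced model and then to the canonical model, and work with the Zariski decomposition there. Concretely, let $(X,\G)$ be a minimal reduced model, meaning a reduced model with $K_\G$ nef. By McQuillan's theory, $K_\G=P+N$, where $P$ is nef with $P^2=\vol(\F)$ and $N$ is an effective $\QQ$-divisor supported on maximal $\G$-chains. The support of $N$ is a disjoint union of Hirzebruch--Jung strings, and each coefficient of $N$ has denominator dividing the determinant of its chain. Contracting these chains gives the canonical model $(Y,\mathcal H)$, which has only cyclic quotient singularities of type $\frac1n(1,q)$ along the contracted chains. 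Then $K_{\mathcal H}$ is an ample $\QQ$-Cartier divisor with $K_{\mathcal H}^2=\vol(\F)$.

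\textbf{Step 1: using the canonical section.} Since $p_g=1$, there is a unique effective divisor $D\in|K_\G|$, and it is nonzero because $\G$ is of general type. Because $N$ is the negative part of the Zariski decomposition, every section of $K_\G$ vanishes on $N$. Hence $D=N+D'$ with $D'\equiv P$ effective, so
\[
\vol(\F)=P^2=P\cdot D'=K_{\mathcal H}\cdot \pi_*D.
\]
The right-hand side is a sum of local intersection numbers of the ample $\QQ$-divisor $K_{\mathcal H}$ with the integral curves $C$ in $\pi_*D$.

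\textbf{Step 2: bounding $K_{\mathcal H}\cdot C$ for a single component.} I will compute $K_{\mathcal H}\cdot C$ via adjunction-type formulas for foliations. If $C$ is $\mathcal H$-invariant, use $K_{\mathcal H}\cdot C=-\chi(C)+Z(\mathcal H,C)$. If $C$ is not invariant, use $K_{\mathcal H}\cdot C=-C^2+\tang(\mathcal H,C)$, corrected by the singular contributions at the cyclic quotient points. The key point is that every fractional contribution comes from a quotient singularity lying on a $\G$-chain. Since $K_{\mathcal H}\cdot C>0$, the question becomes: what is the smallest positive rational number that can arise?

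My expectation is that the denominators that occur are controlled by $2$ and $3$. The orbifold $Z$-correction at a quotient point of a chain through $C$ contributes a term of the form $1-1/n$ or $1/n$. Combined with $\chi(C)\le 2$ and integrality away from the singularities, a small positive total should be forced to be at least $1/6$, as in the orbifold bound $1-\frac12-\frac13$. I will also check that the remaining pieces of $D'$ contribute nonnegatively, so that a single component already gives $\vol(\F)\ge 1/6$.

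\textbf{Main obstacle.} The hardest part is Step 2: ruling out positive intersection numbers smaller than $1/6$ when $C$ passes through several chain singularities, or through a chain of large determinant. Here I would first try to exploit two facts together: $D$ is integral on $X$, and $N$ is the full negative part. This constrains the possible configurations, because the coefficients of $N$ along a chain are forced by $K_\G\cdot E_i=0$ minus $1$ at the first curve. Consequently $D$ can meet a chain only at its extremal curves, and with multiplicity governed by the chain data. A case analysis on the number of chains met (one, two, or at least three) should then reduce to the inequality $1-\sum(1/n_i)$-type bounds, with the minimum $1/6$ attained at the configuration of types $(2,3)$.
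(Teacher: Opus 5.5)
\textbf{There is a genuine gap in Step~2, which is the whole content of the theorem.} Your reduction to $P^2=\sum_A m_A(P\cdot A)$, over the components $A$ of $D$ outside $\Supp(N)$, agrees with the paper's. But Step~2 is stated only as an expectation, and the tools you list do not give $1/6$.

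\textbf{Invariant components.} The index formula gives $P\cdot A=2p_a(A)-2+\nu_A+\sum_i(1-1/\mu_{i,r_i+1})$. With $p_a(A)=\nu_A=0$ and three chains this is $1-\sum_i 1/\mu_{i,r_i+1}$. That equals $1/42$ for $(2,3,7)$, and $1/6-1/n$ for $(2,3,n)$ with $n\ge7$. So the orbifold bound you invoke has minimum $1/42$ (Pereira--Svaldi's constant), not $1/6$.

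Integrality of $D$ does not rule these out. Take the numerical configuration: a smooth rational invariant $A$ with $A^2=-2$, meeting the last curves of three chains of $(-2)$-curves of lengths $1,2,6$, and $D=A+T_1+T_2+T_3$ with all coefficients one. It satisfies every chain equation $M_T\mathbf a=\mathbf e_1+\mathbf s$ and $D\cdot A=K_{\F}\cdot A=1$, yet $P^2=P\cdot A=1/42$.

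The missing ingredient is the Camacho--Sad formula. When $p_a(A)=\nu_A=0$, the only singularities on $A$ are the chain ends. Hence $A^2=-\sum_i\mu_{i,r_i}/\mu_{i,r_i+1}$ must be an integer, namely a sum of reduced fractions in $(0,1)$; in the example above this sum is $-85/42$. The paper uses exactly this congruence to eliminate every case with $\nu_A=0$ and $t=3$ or $t=4$.

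\textbf{Non-invariant components.} An orbifold correction of size $1/n$ at chain points cannot give a uniform bound, since positive values of $P\cdot C$ for non-invariant curves have no positive lower bound (\autoref{rem:noninvariant-no-gap}). The paper instead uses that $A$ is itself a component of $D=mA+R$. Combining $K_{\F}\cdot A=mA^2+R\cdot A$ with $\tang(\F,A)=K_{\F}\cdot A+A^2$ gives $K_{\F}\cdot A=(m\tang(\F,A)+R\cdot A)/(m+1)$. Thus $m(P\cdot A)$ splits into non-negative terms, including $m(A\cdot C_j)\bigl(a_j/(m+1)-b_j\bigr)$. These are summed chain by chain. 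Integrality of $a_r=(1+\sum_k s_k\mu_k)/\mu_{r+1}$ then gives $b_J\ge 1/(1+\sum_{k\le J}s_k\mu_k)$, and hence $\Psi(T)=0$ or $\Psi(T)\ge1/6$.

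Your assertion that $D$ meets a chain only at its extremal curves is unjustified: a transverse component may cross any $C_j$ at a regular point. Ruling out $J>1$ is precisely the estimate above. You would also need to handle chains met by both an invariant and a non-invariant component of $D$; the paper does this with McQuillan's classification of $P$-trivial curves.

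\textbf{Minor points.} A reduced model with $K$ nef has $N=0$, so you mean a relatively minimal model. Also, contracting only the chains makes the canonical class nef and big, not ample.
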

The lower bound in the above theorem is sharp. In fact,
we construct in \autoref{sec:examples} two foliated surfaces of general type with $p_g(\F)=1$ and volume $1/6$; see \autoref{thm:plane-family} with $n=2$ and \autoref{ex:invariant-canonical}.


The main idea of the proof of \autoref{thm:lower-bound}, motivated by Pereira--Svaldi \cite{PS19},
is to study $P\cdot A$ for the components $A$ of an effective canonical divisor, where $P$ is the nef part of the Zariski decomposition of $K_{\F}$. Invariant components satisfy a uniform intersection bound; non-invariant components are controlled by the tangency formula and the integrality of the coefficients along the negative chains.
For a foliation induced by a fibration of genus at least two,
Pereira--Svaldi \cite[Lemma~5.2]{PS19} proved the bound
$P\cdot A\ge1/42$ when $P\cdot A>0$ and $A$ is a tail, namely an
invariant curve meeting $\Supp(N)$ but not contained in it.
Extending their result, we obtain the following.


\begin{proposition}\label{thm:invariant-gap}
Let $(S,\F)$ be a reduced foliated surface with $K_{\F}$ pseudo-effective, and let $K_{\F}=P+N$ be its Zariski
decomposition. For every $\F$-invariant irreducible curve $A\subset S$,
\[
 \text{either~}~ P\cdot A=0\qquad\text{or~}~ P\cdot A\ge\frac16.
\]
Moreover, the constant $1/6$ is optimal.
\end{proposition}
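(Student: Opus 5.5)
The plan is to compute $P\cdot A$ directly from the structure of the negative part $N$ (McQuillan's description for reduced foliations), reduce to an orbifold-type count, and then use the Camacho--Sad formula to cut that count down to the asserted gap.

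\textbf{Reduction.} If $A\subset\Supp(N)$ then $P\cdot A=0$ by the properties of the Zariski decomposition, so assume $A\not\subset\Supp(N)$. By McQuillan's theorem, since $(S,\F)$ is reduced and $K_\F$ is pseudo-effective, $\Supp(N)$ is a disjoint union of maximal $\F$-chains $\Theta=\Gamma_1+\dots+\Gamma_r$. These are Hirzebruch--Jung strings of invariant smooth rational curves with $\Gamma_i^2\le-2$, $Z(\F,\Gamma_1)=1$ and $Z(\F,\Gamma_i)=2$ for $i\ge2$.

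\textbf{Step 1: the coefficients of $N$.} Solve $N\cdot\Gamma_i=K_\F\cdot\Gamma_i$. This gives $K_\F\cdot\Gamma_1=-1$ and $K_\F\cdot\Gamma_i=0$ for $i\ge2$. A continued-fraction computation (already visible for $r=1,2$) then shows that the coefficient of the last curve $\Gamma_r$ equals $1/n$, where $n=\det$ of the chain is at least $2$.

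\textbf{Step 2: where $A$ can meet $N$.} Since $A$ is invariant and the singularities are reduced, $A$ can meet a chain only at a singular point of $\F$. All singular points interior to the chain are already used by two invariant branches, so $A$ meets $\Theta$ only at the free singular point of $\Gamma_r$, transversally. Hence $N\cdot A=\sum_j 1/n_j$, summing over the chains meeting $A$.

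\textbf{Step 3: the orbifold count.} By the adjunction-type formula $K_\F\cdot A=Z(\F,A)-\chi(A)$ for invariant curves (with the correction for nodes of $A$), we get
\[
P\cdot A=2p_a(A)-2+Z'+\sum_j\Bigl(1-\frac1{n_j}\Bigr),
\]
where $Z'\ge0$ counts the remaining singular contributions. If $p_a(A)\ge1$ or $Z'\ge1$, every nonzero value is at least $1/2$. So the only critical case is $A\cong\PP^1$ all of whose singular points lie on chains. Then $P\cdot A=-2+\sum_j(1-1/n_j)$, which is exactly the orbifold Euler characteristic count. A priori this allows the values $1/42,1/24,\dots$ coming from triples $(2,3,n)$, $(2,4,n)$, $(3,3,n)$.

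\textbf{Step 4 (main obstacle): excluding values below $1/6$.} This step must use foliation-specific constraints beyond the orbifold count. I would try the Camacho--Sad formula. At the point $p=A\cap\Gamma_r$ the indices satisfy $\CS(\F,A,p)\cdot\CS(\F,\Gamma_r,p)=1$. The indices along the chain are determined recursively by the self-intersections, giving $\CS(\F,A,p)=-q_j/n_j$ with $0<q_j<n_j$ coprime to $n_j$. Therefore
\[
A^2=-\sum_j q_j/n_j,
\]
and since $A^2$ is an integer, this forces $\sum_j q_j/n_j\in\ZZ$.

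Combining this integrality condition with the list of hyperbolic triples and quadruples should eliminate every configuration with $0<-2+\sum(1-1/n_j)<1/6$. For instance, $(2,3,n)$ requires $1/2+q_2/3+q_3/n\in\ZZ$, which is impossible unless $n$ is divisible by $6$; similar divisibility conditions apply to the other families. If integrality alone does not suffice, the fallback is to contract the chains to cyclic quotient points and apply the Hodge index theorem to the image of $A$ against $P$.

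\textbf{Step 5: optimality.} Finally, exhibit a configuration with $P\cdot A=1/6$, for example the invariant curve in the example of \autoref{ex:invariant-canonical}, to show that the constant $1/6$ cannot be improved.
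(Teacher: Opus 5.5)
Your overall route matches the paper's: first an orbifold formula for $P\cdot A$, then Camacho--Sad integrality $A^2=-\sum_j q_j/n_j\in\ZZ$. As written, however, there are three genuine gaps.

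First, McQuillan's description of $\Supp(N)$ as a disjoint union of maximal $\F$-chains is a statement about \emph{relatively minimal} models, not arbitrary reduced ones. Blowing up a regular point gives $K_{\F'}=\pi^*K_{\F}+E$ and $N'=\pi^*N+E$, so $\Supp(N')$ contains a $(-1)$-curve and Steps 1--2 no longer apply. You need the preliminary reduction. Take $\pi\colon S\to S_{\min}$ onto a relatively minimal model. Discrepancies of blow-ups between reduced models are $0$ or $1$, so $K_{\F}=\pi^*K_{\F_{\min}}+E$ with $E\ge0$ exceptional. Uniqueness of the Zariski decomposition then gives $P=\pi^*P_{\min}$, and hence $P\cdot A=P_{\min}\cdot\pi_*A$, or $0$ if $A$ is exceptional.

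Second, your claim that $Z'\ge1$ forces every nonzero value to be at least $1/2$ is false. With $p_a(A)=0$, $Z'=1$ and two chains, $P\cdot A=1-1/n_1-1/n_2$, which takes the values $\tfrac16,\tfrac14,\tfrac3{10},\tfrac13,\dots$. The bound $1/6$ still holds there. But this is exactly where the optimal constant is attained, with determinants $(2,3)$, as in the example you cite in Step 5. In your ``critical'' case $Z'=0$ the value $1/6$ never occurs, so your Step 3 misplaces the extremal configuration.

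Third, Step 4 is the heart of the proof, and you leave it at ``should eliminate''. Moreover, the condition you extract is too weak. ``$n$ divisible by $6$'' still admits $(2,3,12)$ with $P\cdot A=1/12$, and a divisibility test likewise admits $(2,4,8)$ with $P\cdot A=1/8$.

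The ingredient that works is that each $q_j/n_j$ is a \emph{reduced} fraction in $(0,1)$. Integrality then determines the third fraction modulo $\ZZ$ from the first two, and hence determines its denominator exactly; for $(2,3,n)$ this forces $n=6$, so $P\cdot A=0$. Order the determinants $n_1\le n_2\le n_3$. The window $5/6\le\sum_j1/n_j<1$ gives $n_1\le3$, and then $3\le n_2\le6$ if $n_1=2$, or $n_2\in\{3,4\}$ if $n_1=3$. For the pairs $(2,3),(2,4),(2,5),(2,6),(3,3),(3,4)$, the forced third denominators are $6,4,10,3,3,12$. None of these is compatible with both the window and the ordering. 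For $(3,3)$, an integral partial sum leaves no room for a third fraction. With four chains, only $(2,2,2,3)$ lies in the window, and it is excluded because $3/2+q/3\notin\ZZ$. So integrality alone suffices and the Hodge-index fallback is unnecessary, but this case check must actually be carried out for the proof to be complete.
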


The equality $P\cdot A=\frac16$ is realized in \autoref{ex:invariant-canonical}.
We remark that in the above proposition, there is no assumption on the geometric genus of the foliation.
It gives the sharp bound $1/6$ and applies
to all invariant curves of a reduced foliation with pseudo-effective
canonical divisor.
On the relatively minimal model of a foliated surface of general type, McQuillan's theorem classifies the curves with $P\cdot A=0$ and shows
that they are invariant \cite[III.3.2 and Remark~III.2.2]{McQ08}; see
also \cite[Proposition~2.15 and Theorem~2.16]{CF18}.
The equality $P\cdot A=1/6$ is characterized in \autoref{prop:invariant-intersection}.
The invariance hypothesis is essential: \autoref{rem:noninvariant-no-gap} gives non-invariant curves with positive intersections tending to zero.


Our method applies also to the case when one of the pluri-geometric genus $p_n(\F):=\dim H^0(S,nK_{\F})>0$ for some $n\geq 2$.
We will deal with this in a forthcoming paper.
It is natural to ask what happens when the equality holds in
\autoref{thm:lower-bound}.

\begin{theorem}\label{thm:equality-intro}
Let $(S,\F)$ be relatively minimal with $p_g(\F)>0$ and
$\vol(\F)=1/6$. Then $p_g(\F)=1$. Let $D$ be its unique effective
canonical divisor and let $D=P+N$ be its Zariski decomposition with $P$ nef.
Then exactly one of the following holds:
\begin{enumerate}
\item every component of $D$ is $\F$-invariant whose normalization is rational, and the surface $S$ is regular, i.e., $q(S)=0$;
\item the part of $D$ outside $\mathrm{Supp}(N)$ is a single smooth curve $C$, everywhere
transverse to $\F$. Moreover, the coefficient of $C$ in $D$ is exactly one.
\end{enumerate}
Both cases occur on rational surfaces.
\end{theorem}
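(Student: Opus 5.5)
Since $\vol(\F)=P^2$ on the relatively minimal model and $D\equiv K_{\F}$, I would start from
\[
 \tfrac16=P^2=P\cdot D=\sum_i a_i\,P\cdot A_i ,
\]
where $D=\sum a_iA_i$ and the sum runs over the components not in $\Supp(N)$. On the relatively minimal model, $P\cdot A=0$ forces $A\subset\Supp(N)$ by McQuillan's classification, so every such $A_i$ has $P\cdot A_i>0$. First I would show $p_g(\F)=1$. If $p_g\ge2$, then \eqref{eq:LT-intro} gives $\vol\ge1/2$, a contradiction. So $D$ is the unique effective canonical divisor.

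The argument then splits on whether some $A_i$ is invariant. Suppose some $A_i$ outside $\Supp(N)$ is invariant. By \autoref{thm:invariant-gap}, $P\cdot A_i\ge1/6$, hence $a_i=1$, $P\cdot A_i=1/6$, and it is the only component outside $\Supp(N)$. Components of $N$ are invariant on the relatively minimal model, since they come from the negative chains and McQuillan's list. So every component of $D$ is invariant. For rationality of the normalizations I would use the characterization of equality $P\cdot A=1/6$ in \autoref{prop:invariant-intersection}. I expect it to say that $A$ is a rational invariant curve meeting chains of types $(2,3,\dots)$. The negative chains and elliptic Gorenstein leaves in $N$ also have rational components, except possibly elliptic cycles, which I would exclude by the same equality analysis.

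For $q(S)=0$, I would argue by contradiction. If $q>0$, pull back a holomorphic $1$-form and consider the Albanese map. Rational curves are contracted, so the whole of $D$ lies in fibres of the Albanese map. One then needs a contradiction with $\vol=1/6$, for instance via the tangency/pairing of $K_{\F}$ with a nonzero form, or via the paper's finiteness results for irregular surfaces. I expect this regularity step to be the main obstacle. My first attempt would be to show that an invariant effective canonical divisor supported in Albanese fibres forces $\F$ to be the Albanese fibration, and then to compute that the volume is then at least $1/2$.

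Now suppose no $A_i$ is invariant. For a non-invariant component $C$, the tangency formula gives
\[
 K_{\F}\cdot C=\tang(\F,C)-C^2\ge -C^2 .
\]
Combined with the integrality of the coefficients of $N$ along the negative chains, this yields lower bounds for $P\cdot C$; the paper's main proof presumably gives $\sum a_iP\cdot A_i\ge1/6$ with equality only in one configuration. I would track equality through that argument. It should force a single component $C$ with $a=1$, $\tang(\F,C)=0$ (so $C$ is everywhere transverse), and $C$ smooth, since a singular point of $C$ contributes to the tangency. The two alternatives are exclusive because in the second case $C$ is not invariant, while in the first every component is invariant. Finally, for the statement that both cases occur on rational surfaces, I would cite \autoref{ex:invariant-canonical} for the first case and \autoref{thm:plane-family} with $n=2$ for the second.
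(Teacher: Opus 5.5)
Your proposal has two genuine gaps.

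\textbf{First gap: curves with $P\cdot A=0$.} Your claim that on a relatively minimal model $P\cdot A=0$ forces $A\subset\Supp(N)$ is false. McQuillan's list (\autoref{lem:zero-types}) contains chains, cycles and nodal rational curves outside $\Supp(N)$. For example, a smooth rational invariant curve with $\nu_A=1$ meeting two single $(-2)$-curves of $N$ has $P\cdot A=0$ by \eqref{eq:orbifold}, and such curves may be components of $D'$.

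Your case split therefore fails as stated. An invariant component of $D'$ need not satisfy $P\cdot A\ge1/6$, so it can coexist with a transverse component carrying all of $P^2$. In the transverse case you never exclude extra $P$-trivial invariant components of $D'$, which is exactly what ``the part of $D$ outside $\Supp(N)$ is a single curve'' requires.

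Uniqueness of the transverse curve also cannot be read off component by component from $\sum a_i\,P\cdot A_i=1/6$, because non-invariant curves have no gap (\autoref{rem:noninvariant-no-gap}). The equality analysis has to be done chain by chain via $\Psi(T)$ and \autoref{prop:pure-chain}, which your sketch only gestures at.

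The paper writes $P^2$ as the sum of the invariant terms $m_A(P\cdot A)$ and the terms $\Psi(T)$. Each term is $0$ or at least $1/6$, so exactly one is positive. In the transverse case, any other component of $D'$, together with the chains meeting it, would form a connected component of $\Supp(P)=\Supp(D)$ with which $P$ has zero intersection. This is impossible for an effective nef and big divisor, by the Hodge index theorem. In the invariant case the extra $P$-trivial components are harmless, since \autoref{lem:zero-types} makes them invariant with rational normalization. There are no ``elliptic cycles'' to exclude, and $\Supp(N)$ consists only of the maximal chains, not of elliptic Gorenstein leaves.

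\textbf{Second gap: regularity.} You leave $q(S)=0$ open, and the route you suggest is unsubstantiated and unnecessary. That route was to show $\F$ must be the Albanese fibration and then that $\vol\ge1/2$. You already have the key observation: every component of $D$ has rational normalization and is therefore contracted by the Albanese map. The missing idea is simply that $D\equiv K_{\F}$ is big.

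If the Albanese map were nonconstant, the pullback $H$ of an ample divisor on its image would be a nonzero nef class with $D\cdot H=0$. On the other hand, writing $D\equiv H_0+E$ with $H_0$ ample and $E$ effective gives $D\cdot H\ge H_0\cdot H>0$. This contradiction gives $q(S)=0$, whether the Albanese image is a curve or a surface.
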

The precise configurations of $D$ are given in \autoref{thm:equality-core}. The first case is realized in
\autoref{ex:invariant-canonical}, and the second case in \autoref{thm:plane-family} with $n=2$.

As an application, we study the Poincar\'e problem concerning the algebraic
integrability. A foliation is called algebraically integrable if it admits
a rational first integral.
The Poincar\'e problem asks for criteria to determine whether a foliation is algebraically integrable.
This problem has been studied extensively,
see, for instance, \cite{Dar78,Jou78,Ghy00,Per02,PS19,LT24}.
Thanks to \autoref{thm:lower-bound},
we are able to give a positive answer to the Poincar\'e problem for
foliations on irregular surfaces with small volume.
 
\begin{corollary}\label{cor:alb-intro}
Let $(S,\F)$ be a foliated surface of general type with $q(S)>0$. If
\[
 \vol(\F)<\frac16,
\]
then $p_g(\F)=0$ and $\F$ is induced by the Albanese fibration of $S$.
In particular, $\F$ is algebraically integrable.
\end{corollary}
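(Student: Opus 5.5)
By \autoref{thm:lower-bound}, the hypothesis $\vol(\F)<1/6$ forces $p_g(\F)=0$; otherwise $p_g(\F)>0$ would give $\vol(\F)\ge 1/6$. The remaining task is to show that $\F$ is induced by the Albanese fibration. The plan is to exploit $q(S)>0$ through holomorphic $1$-forms.

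Pass to a reduced model $(X,\G)$, which is birational to $(S,\F)$ and has $q(X)=q(S)>0$. Take a nonzero holomorphic $1$-form $\omega\in H^0(X,\Omega^1_X)$ and restrict it to the foliation: the natural map $\Omega^1_X\to K_{\G}=T_{\G}^{*}$ sends $\omega$ to $\omega|_{T_{\G}}\in H^0(X,K_{\G})$. Since $p_g(\G)=p_g(\F)=0$, this section vanishes. Hence every holomorphic $1$-form on $X$ vanishes on $T_{\G}$, so $\G$ is defined by $\omega$ wherever $\omega\neq 0$.

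We split into two cases according to the wedge products of $1$-forms. Suppose first that there exist $\omega_1,\omega_2$ with $\omega_1\wedge\omega_2\not\equiv0$. Both forms vanish on the line field $T_{\G}$, and at a general point two independent covectors annihilating a line is impossible. So this case cannot occur, and all $1$-forms are pairwise wedge-dependent. By the Castelnuovo--de Franchis theorem (applied when $q\ge2$), or directly when $q=1$, the Albanese image is then a curve $B$. Concretely: if $q\ge2$ and all wedges vanish, Castelnuovo--de Franchis gives a fibration $f\colon X\to B$ with $g(B)\ge2$ such that every $\omega$ is a pullback from $B$; if $q=1$, the Albanese map lands in an elliptic curve and its image is a curve. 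In either case the Albanese map is a fibration $f\colon X\to B$ onto a curve, after Stein factorization, which is the Albanese fibration, and every $1$-form is of the form $f^*\eta$.

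Now take $\omega=f^*\eta\neq0$. The foliation $\G$ is the kernel of $\omega$ at general points, and the kernel of $f^*\eta$ is the tangent space to the fibres of $f$. Therefore $\G$ coincides with the foliation induced by $f$, and so $\F$ is induced by the Albanese fibration of $S$. In particular it admits a rational first integral, so it is algebraically integrable.

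The main obstacle is the birational bookkeeping in the reduction. One must check that $p_g$ computed on the reduced model equals $h^0(K_{\G})$, so that the restriction map genuinely lands in the space that vanishes. One must also check that the Albanese fibration is defined birationally, so the conclusion transfers back to $S$. Both are standard, since $q$ and the fibration structure are birational invariants.
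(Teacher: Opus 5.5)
Your proof is correct and follows the paper's route. \autoref{thm:lower-bound} gives $p_g(\F)=0$. Then, on a reduced model, every holomorphic one-form annihilates $T_{\F}$ because its image in $H^0(K_{\F})$ vanishes, so the Albanese image is a curve whose fibration induces $\F$; this is \autoref{prop:alb}.

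The differences are minor. You reach ``the Albanese image is a curve'' through vanishing wedges and Castelnuovo--de Franchis, whereas the paper simply notes that $da_X(T_{\F})=0$ forces generic rank at most one. The paper also makes your descent step explicit: the rational exceptional curves of $X\to S$ are contracted by the fibration because $g(B)=q(S)>0$.
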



Finally, we consider the possible volumes in this range.
Han--Jiao--Li--Liu \cite[Theorem~1.2]{HJLL24} proved discreteness of
the volumes of algebraically integrable foliations with canonical
singularities when the rank and the volume of a general leaf are
fixed.
For an algebraically integrable foliation of general type on a surface, the leaf volume is $2g-2$, where $g\ge2$ is the genus of a general leaf. In our setting, the general leaf is an Albanese fiber. Its genus is bounded when the surface volume $\vol(S)$ is bounded, by boundedness of surfaces of general type and deformation invariance of the Albanese fiber genus; see \cite{Cat15} and \autoref{lem:albanese-genus}. Together with the common denominator established in \autoref{prop:denominator}, this gives the following result.

\begin{theorem}\label{thm:finite-intro}
Fix a positive integer $c$. There exists a number $\varepsilon_c>0$ such
that every foliation $\F$ of general type on a smooth projective irregular
surface $S$ whose volume $\vol(S)\le c$ satisfies
\[
 \vol(\F)\ge\varepsilon_c.
\]
Moreover, the set of possible values $\vol(\F)\in(0,1/6)$ for these
foliations is finite.
\end{theorem}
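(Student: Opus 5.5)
Both assertions will come from splitting according to the size of $\vol(\F)$ and invoking the earlier results of the introduction. Let $(S,\F)$ be a foliation of general type on a smooth projective irregular surface with $\vol(S)\le c$. If $\vol(\F)\ge 1/6$ there is nothing to show, since $\varepsilon_c\le 1/6$ will be arranged. So assume $\vol(\F)<1/6$. Then \autoref{cor:alb-intro} gives $p_g(\F)=0$ and says that $\F$ is induced by the Albanese fibration $f\colon S\to B$. In particular $\F$ is algebraically integrable and its general leaf is an Albanese fiber $F$ of genus $g\ge 2$; this genus is at least two because $\F$ is of general type.

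The key step is to bound $g$ in terms of $c$ alone. Pass to the minimal model $S_{\min}$. Irregularity and the Albanese map are birational invariants, and $\vol(S)=K_{S_{\min}}^2\le c$. Since $\F$ is of general type and is induced by a fibration of genus $g\ge 2$ over the curve $B$, the surface $S$ is itself of general type. By boundedness of minimal surfaces of general type with $K^2\le c$, such surfaces form finitely many families \cite{Cat15}. When the Albanese map is a fibration, the genus of its fiber is deformation invariant on each connected component of moduli; this is the content of \autoref{lem:albanese-genus}. Hence $g\le g_c$ for a constant $g_c$ depending only on $c$.

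With the genus bounded, the remaining steps are quick. For a foliation induced by a fibration of genus $g$, \autoref{prop:denominator} provides a common denominator $M_g$ with $M_g\vol(\F)\in\ZZ$. Alternatively, one can use the Pereira--Svaldi index bound \cite[Proposition~5.3]{PS19}, which says that $r_g K_\F$ is Cartier on the relatively minimal model with $r_g$ depending only on $g$. Put $M=\lcm\{M_g : 2\le g\le g_c\}$. Then every $\vol(\F)$ in the range $(0,1/6)$ lies in $\frac1M\ZZ\cap(0,1/6)$. This set is finite, and it gives $\vol(\F)\ge 1/M$. Taking $\varepsilon_c=\min\{1/6,1/M\}$ proves both the lower bound and the finiteness.

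The main obstacle is the genus bound. One must check that the leaf genus really equals the Albanese fiber genus. One must also check that this genus is constant in families: the Albanese map has to stay a fibration, rather than becoming generically finite, under deformation within a component. Finally, one must handle the passage from $S$ to $S_{\min}$. I would settle these first by combining \autoref{lem:albanese-genus} with the finiteness of components of the moduli space with bounded $K^2$.
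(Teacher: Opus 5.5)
Your proposal is correct and is essentially the paper's proof: restrict to $0<\vol(\F)<1/6$, use \autoref{cor:alb-intro} to identify $\F$ with the foliation of the Albanese fibration, pass to the minimal model and bound the fiber genus by \autoref{lem:albanese-genus} (the paper first turns $\vol(S)\le c$ into $\chi(\OO_S)\le\lfloor (c+6)/2\rfloor$ via Noether's inequality), and conclude with the common denominator $\ell_{G_c}$ of \autoref{prop:denominator}, which gives $\varepsilon_c=1/\ell_{G_c}\le 1/6$.

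Two small remarks. First, your worry that the Albanese map might stop being a fibration under deformation is unnecessary. The lemma only compares members of a deformation type whose Albanese image is a curve, and it reads off $g$ from the cup-product image $\ZZ[F]$ together with $K_S\cdot F$. Second, in your optional Pereira--Svaldi aside, $K_\F$ is already Cartier on the smooth relatively minimal model; the bounded index there concerns the denominators of $N$, equivalently the Cartier index on the canonical model.
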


\vspace{2mm}
The paper is organized as follows. In \autoref{sec:chains}, we
recall some basic facts about the foliated surfaces.
In \autoref{sec:invariant}, we prove \autoref{thm:invariant-gap} and characterize equality for invariant intersections.
The lower bound in \autoref{thm:lower-bound} is proved in \autoref{sec:lower-bounds}.
In \autoref{sec:equality}, we study the equality case.
\autoref{sec:albanese-geography} is devoted to studying the foliations on
irregular surfaces, the proofs of \autoref{cor:alb-intro} and \autoref{thm:finite-intro}, and the remaining uniform lower bound problems. Finally, in
\autoref{sec:examples}, we construct the examples mentioned above.

\section{Preliminaries}\label{sec:chains}

We recall some basic facts about foliated surfaces, and refer to \cite{Bru04,McQ08} and \cite[Section~2]{LT24} for more details.

Let $S$ be a smooth projective surface over $\CC$. A foliation $\F$ on
$S$ is a saturated invertible subsheaf $T_{\F}\subset T_S$, so that
$T_S/T_{\F}$ is torsion-free. The line bundle $T_{\F}$ is the tangent
bundle of $\F$, and its dual $K_{\F}=T_{\F}^{*}$ is the canonical
bundle. Locally, $\F$ is generated by a holomorphic vector field
\[
 v=a(x,y)\frac{\partial}{\partial x}
   +b(x,y)\frac{\partial}{\partial y}
\]
with isolated zeros. Its zero set is the singular locus $\Sing(\F)$;
local generators differ by multiplication by \correction{a nowhere-vanishing
holomorphic function}.

An irreducible curve $A\subset S$ is $\F$-invariant if $\F$ is tangent
to $A$ at a general smooth point. The foliation is algebraically
integrable if it admits a rational first integral, and transcendental
otherwise. After resolving a first integral and taking its Stein
factorization, an algebraically integrable foliation is induced by a
fibration $f:X\to B$. Here a fibration means a surjective morphism to
a smooth projective curve with connected fibers; its genus is that of
a general fiber.

Let $p\in\Sing(\F)$, and let $\lambda_1,\lambda_2$ be the eigenvalues
of the linear part of a local generator at $p$. The singularity is
reduced if, after reordering,
$\lambda_2\ne0$ and $\lambda_1/\lambda_2\notin\QQ_{>0}$.
It is non-degenerate if both eigenvalues are non-zero, and a saddle-node
if exactly one is non-zero. A foliation is reduced if all its
singularities are reduced. By Seidenberg's theorem \cite{Sei68},
every foliated surface admits a reduced model \correction{obtained by a finite
sequence of blow-ups}.

The Kodaira dimension of $\F$ is $\Kod(\F)=\Kod(K_{\G})$, computed
on a reduced model $(X,\G)$ of $(S,\F)$. It is independent of the
choice of reduced model \cite[Section~2.3]{LT24}. The foliation is of
general type if $\Kod(\F)=2$, equivalently if $\vol(\F)>0$.

For a reduced foliation, a smooth rational curve $E$ with $E^2=-1$
is $\F$-exceptional if its contraction produces a reduced foliation.
A foliated surface is relatively minimal if its foliation is reduced
and it contains no $\F$-exceptional curve. Every foliated surface
admits a relatively minimal model \cite[Chapter~5]{Bru04}.

Suppose that $K_{\F}$ is pseudo-effective on a relatively minimal model, and write
\begin{equation*}
 K_{\F}=P+N.
\end{equation*}
The support of $N$ is a disjoint union of maximal $\F$-chains, and $\lfloor N\rfloor=0$ \cite[Chapter~8]{Bru04}; see also
\cite{McQ08} and \cite[Theorem~2.9]{CF18}. Such a chain has the form
\[
 T=C_1+\cdots+C_r,\qquad C_j^2=-e_j,\quad e_j\ge2,
\]
where the $C_j$ are smooth $\F$-invariant rational curves, consecutive curves meet transversely at one point, and there are no other intersections. In particular, $T$ is a Hirzebruch--Jung string. We write $[-e_1,\ldots,-e_r]$ for this ordered chain; the entries record self-intersection numbers. Its canonical intersections and indices are
\begin{equation}\label{eq:chain-int}
 K_{\F}\cdot C_1=-1,\quad K_{\F}\cdot C_j=0\ (j\ge2),\qquad
 Z(\F,C_1)=1,\quad Z(\F,C_j)=2\ (j\ge2).
\end{equation}
All singularities on the chain are reduced and non-degenerate.

Fix a maximal chain $T=C_1+\cdots+C_r$.  Put
\begin{equation*}
  M_T=(-C_i\cdot C_j)_{i,j}
  =
  \begin{pmatrix}
    e_1&-1&0&\cdots&0\\
    -1&e_2&-1&\ddots&\vdots\\
    0&-1&e_3&\ddots&0\\
    \vdots&\ddots&\ddots&\ddots&-1\\
    0&\cdots&0&-1&e_r
  \end{pmatrix},
  \qquad \det M_T>0.
\end{equation*}
Define
\begin{align}
  \mu_0=0,\quad\mu_1=1,
  \qquad \mu_{j+1}&=e_j\mu_j-\mu_{j-1},
  \label{eq:mu}\\
  \xi_{r+1}=0,\quad\xi_r=1,
  \qquad \xi_{j-1}&=e_j\xi_j-\xi_{j+1}.\notag
\end{align}
Thus $\mu_j$ is the determinant of the part of the chain strictly before
$C_j$, and $\xi_j$ is the determinant of the part strictly after $C_j$.
In particular,
\[
  \mu_{r+1}=\xi_0=\det M_T.
\]
The recurrence and $e_j\ge2$ imply
\begin{equation}\label{eq:mu-increasing}
  0=\mu_0<\mu_1<\cdots<\mu_{r+1},
  \qquad
  \gcd(\mu_r,\mu_{r+1})=1.
\end{equation}
Indeed, $\mu_{j+1}\ge2\mu_j-\mu_{j-1}>\mu_j$ whenever
$\mu_j>\mu_{j-1}$, and
$\gcd(\mu_{j+1},\mu_j)=\gcd(\mu_j,\mu_{j-1})$.
The analogous strict inequalities hold for the $\xi_j$ in reverse order.
In particular, $\mu_{r+1}\ge r+1\ge2$.

\begin{lemma}[Inverse matrix]\label{lem:inverse}
For $i\le j$,
\begin{equation*}
  (M_T^{-1})_{ij}=\frac{\mu_i\xi_j}{\mu_{r+1}},
\end{equation*}
and $M_T^{-1}$ is symmetric.  Equivalently,
\begin{equation}\label{eq:inverse-matrix}
  M_T^{-1}=\frac1{\mu_{r+1}}
  \begin{pmatrix}
    \mu_1\xi_1&\mu_1\xi_2&\cdots&\mu_1\xi_r\\
    \mu_1\xi_2&\mu_2\xi_2&\cdots&\mu_2\xi_r\\
    \vdots&\vdots&\ddots&\vdots\\
    \mu_1\xi_r&\mu_2\xi_r&\cdots&\mu_r\xi_r
  \end{pmatrix}.
\end{equation}
All its entries are positive.
\end{lemma}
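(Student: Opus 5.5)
The plan is to verify directly that the matrix $B$ defined by $B_{ij}=\mu_{\min(i,j)}\xi_{\max(i,j)}/\mu_{r+1}$ satisfies $M_T B=I$. Since $\det M_T>0$, $M_T$ is invertible, so this identity forces $B=M_T^{-1}$. Symmetry of $B$ is built into its definition, and it is also consistent with $M_T$ being symmetric. Positivity of the entries follows from \eqref{eq:mu-increasing} and its analogue for the $\xi_j$: these give $\mu_i\ge1$ for $i\ge1$, $\xi_j\ge1$ for $j\le r$, and $\mu_{r+1}>0$.

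To check $M_TB=I$, fix a column index $k$ and consider the vector $x_i=B_{ik}$. The $i$-th entry of $M_Tx$ is $-x_{i-1}+e_ix_i-x_{i+1}$, with the conventions $x_0=x_{r+1}=0$. These conventions are compatible with the formula because $\mu_0=0$ and $\xi_{r+1}=0$.

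For $i<k$ we have $x_i=\mu_i\xi_k/\mu_{r+1}$, and likewise for $i-1$ and $i+1$, since $i+1\le k$. The recurrence \eqref{eq:mu} then gives $-\mu_{i-1}+e_i\mu_i-\mu_{i+1}=0$. For $i>k$ we have $x_i=\mu_k\xi_i/\mu_{r+1}$, and the $\xi$-recurrence kills the entry in the same way.

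The only real computation is the diagonal entry $i=k$. Here it equals
\[
\frac{-\mu_{k-1}\xi_k+e_k\mu_k\xi_k-\mu_k\xi_{k+1}}{\mu_{r+1}}
=\frac{\mu_{k+1}\xi_k-\mu_k\xi_{k+1}}{\mu_{r+1}},
\]
using $e_k\mu_k-\mu_{k-1}=\mu_{k+1}$. So the task is to show that the Wronskian-type quantity $W_k:=\mu_{k+1}\xi_k-\mu_k\xi_{k+1}$ equals $\mu_{r+1}$ for every $k$.

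I would prove $W_k$ is independent of $k$ by comparing $W_k$ and $W_{k-1}$. Substituting $\mu_{k+1}=e_k\mu_k-\mu_{k-1}$ and $\xi_{k-1}=e_k\xi_k-\xi_{k+1}$ into both shows $W_k=W_{k-1}$, and the check is routine. Evaluating at $k=r$ with $\xi_r=1$ and $\xi_{r+1}=0$ gives $W_r=\mu_{r+1}$. Hence every diagonal entry is $1$.

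This constancy of $W_k$ is the main (mild) obstacle; everything else is bookkeeping with the boundary conventions. As a byproduct, evaluating $W$ at $k=0$ recovers $\xi_0=\mu_{r+1}$, consistent with the stated identity $\mu_{r+1}=\xi_0=\det M_T$.
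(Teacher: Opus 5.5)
Your proposal is correct and follows the paper's proof: both show that $\mu_{k+1}\xi_k-\mu_k\xi_{k+1}$ is independent of $k$, and then check column by column that the proposed matrix is $M_T^{-1}$. The only difference is cosmetic. You evaluate this invariant at $k=r$, whereas the paper evaluates it at $k=0$; your choice has the small advantage of not needing the identity $\xi_0=\mu_{r+1}$.
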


\begin{proof}
The recurrences give
\begin{equation}\label{eq:det-identity}
  \mu_{j+1}\xi_j-\mu_j\xi_{j+1}=\mu_{r+1}.
\end{equation}
Indeed, the left-hand side is independent of $j$, and its value at
$j=0$ is
$\mu_1\xi_0-\mu_0\xi_1=\mu_{r+1}$.

Fix $j$ and define
\[
  v_i=
  \begin{cases}
    \mu_i\xi_j/\mu_{r+1},&i\le j,\\
    \mu_j\xi_i/\mu_{r+1},&i\ge j.
  \end{cases}
\]
The recurrences show that $(M_T\mathbf v)_i=0$ for $i\ne j$, while
\eqref{eq:det-identity} gives $(M_T\mathbf v)_j=1$.  Thus $\mathbf v$ is the
$j$-th column of $M_T^{-1}$.
\end{proof}

\begin{corollary}\label{cor:negative-coefficients}
If $N_T=\sum_j b_jC_j$ is the part of $N$ supported on $T$, then
\begin{equation}\label{eq:b-formulas}
 b_j=\frac{\xi_j}{\mu_{r+1}},\qquad
 b_r=\frac1{\mu_{r+1}},\qquad
 (M_T^{-1})_{jj}=\mu_jb_j,\qquad
 (M_T^{-1})_{rj}=\frac{\mu_j}{\mu_{r+1}}.
\end{equation}
\end{corollary}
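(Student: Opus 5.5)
The four identities in \autoref{cor:negative-coefficients} will all follow from \autoref{lem:inverse}. The plan is to determine $N_T$ from the orthogonality conditions of the Zariski decomposition and then read off the formulas from the explicit inverse \eqref{eq:inverse-matrix}.

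\emph{Setting up the linear system.} By the defining property of the Zariski decomposition, $P\cdot C_i=0$ for every component $C_i$ of $\Supp(N)$. Distinct maximal chains are disjoint, so for $C_i\subset T$ we have $N\cdot C_i=N_T\cdot C_i$. Hence
\[
 K_{\F}\cdot C_i = N_T\cdot C_i=\sum_j b_j\, C_j\cdot C_i = -(M_T\mathbf b)_i .
\]
By \eqref{eq:chain-int}, the left-hand side equals $-1$ for $i=1$ and $0$ for $i\ge2$. Therefore $M_T\mathbf b=\mathbf e_1$, the first standard basis vector. Since $\det M_T>0$, this gives $\mathbf b=M_T^{-1}\mathbf e_1$, the first column of $M_T^{-1}$.

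\emph{Reading off the formulas.} By \eqref{eq:inverse-matrix} and symmetry, the first column of $M_T^{-1}$ has entries $\mu_1\xi_j/\mu_{r+1}$. Since $\mu_1=1$, we get $b_j=\xi_j/\mu_{r+1}$. Because $\xi_r=1$, this specializes to $b_r=1/\mu_{r+1}$. The diagonal entry of $M_T^{-1}$ is $(M_T^{-1})_{jj}=\mu_j\xi_j/\mu_{r+1}=\mu_j b_j$. Finally, for $j\le r$ the lemma gives $(M_T^{-1})_{rj}=(M_T^{-1})_{jr}=\mu_j\xi_r/\mu_{r+1}=\mu_j/\mu_{r+1}$.

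There is no real obstacle here. The only point needing care is the orthogonality step: it uses that $P$ is orthogonal to every component of $N$, together with the disjointness of the maximal chains, so that only $N_T$ contributes to $N\cdot C_i$.
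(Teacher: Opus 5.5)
Your proposal is correct and matches the paper's proof: you use $P\cdot C_j=0$ together with \eqref{eq:chain-int} to get $M_T\mathbf b=\mathbf e_1$, so $\mathbf b$ is the first column of $M_T^{-1}$, and then read off the four formulas from \autoref{lem:inverse}. You also spell out the sign bookkeeping and the disjointness of the maximal chains, which the paper leaves implicit.
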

\begin{proof}
The equalities $P\cdot C_j=0$ and \eqref{eq:chain-int} give
$M_T\mathbf b=\mathbf e_1$, where $\mathbf b=(b_j)^t$ and
$\mathbf e_1$ is the first standard basis vector.
Thus $\mathbf b$ is the first column of $M_T^{-1}$, and the assertions
follow from \autoref{lem:inverse}.
\end{proof}

For an $\F$-invariant curve $A$, we use $Z(\F,A,p)$ and $\CS(\F,A,p)$ for the local $Z$-index and Camacho--Sad index, and write $Z(\F,A)$ and $\CS(\F,A)$ for their sums. We write $p_a(A)$ for the arithmetic genus. \correction{For a curve $A$ that is not $\F$-invariant}, $\tang(\F,A)$ is the total tangency number. The index and tangency formulas are
\begin{align}\label{eq:indices}
 K_{\F}\cdot A&=2p_a(A)-2+Z(\F,A),\qquad \CS(\F,A)=A^2,\\
 \label{eq:tangency}
 \tang(\F,A)&=K_{\F}\cdot A+A^2\ge0.
\end{align}
\correction{For a reduced foliation, the local $Z$-indices} of an invariant curve are non-negative. On a smooth invariant branch through a singularity they are positive. We use the conventions of \cite{Bru04,LT24}.

\begin{lemma}\label{lem:terminal}
Let $T=C_1+\cdots+C_r$ be a maximal chain, and let $A$ be an invariant curve
not contained in $T$. If $A$ meets $T$, it meets only $C_r$, transversely
at a single point $p_T$. The curve $A$ is smooth at $p_T$, and
\begin{equation*}
  Z(\F,A,p_T)=1,
  \qquad
  \CS(\F,A,p_T)=-\frac{\mu_r}{\mu_{r+1}},
  \qquad
  N_T\cdot A=\frac1{\mu_{r+1}},
\end{equation*}
where $N_T=\sum_jb_jC_j$. In particular, the Camacho--Sad index is a
reduced fraction, with $0<\mu_r<\mu_{r+1}$.
\end{lemma}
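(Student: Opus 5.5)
The plan is to read everything off the index identities \eqref{eq:chain-int} along the chain. The tool is the positivity of local $Z$-indices of smooth invariant branches through singular points of a reduced foliation. I also use that two distinct invariant curves can only meet at singular points of $\F$. Finally, at a reduced non-degenerate singularity there are exactly two separatrices, both smooth and crossing transversely; the $Z$-index of each is $1$, and their Camacho--Sad indices are $\lambda_1/\lambda_2$ and $\lambda_2/\lambda_1$, hence reciprocal.

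\textbf{Locating the intersection.} Each point $q_{j,j+1}=C_j\cap C_{j+1}$ is a singular point on both curves, so it contributes at least $1$ to $Z(\F,C_j)$ and to $Z(\F,C_{j+1})$. For $2\le j\le r-1$ the curve $C_j$ passes through two such points. Since $Z(\F,C_j)=2$, these are its only singularities, each with local index $1$. For $C_1$ with $r\ge2$, the point $q_{12}$ already exhausts $Z(\F,C_1)=1$. Hence $C_1,\dots,C_{r-1}$ carry no singularities other than the $q$'s. For $C_r$ with $r\ge2$, the index $Z(\F,C_r)=2$ leaves exactly one further singular point $p_T$, with $Z(\F,C_r,p_T)=1$. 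When $r=1$, $C_1=C_r$ has a single singular point $p_T$.

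Now suppose $A\ne C_j$ is invariant and meets $T$ at a point $p$. Then $p$ is singular. It cannot be some $q_{j,j+1}$, because the two separatrices there are already $C_j$ and $C_{j+1}$. So $p=p_T\in C_r$, and $A$ meets only $C_r$, at the single point $p_T$. Since $p_T$ is reduced and non-degenerate, its two separatrices are $C_r$ and the unique local branch of $A$. Therefore $A$ is smooth at $p_T$, meets $C_r$ transversely there, and $Z(\F,A,p_T)=1$.

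\textbf{Camacho--Sad index.} I will prove by induction on $j$ that
\[
\CS(\F,C_j,q_{j,j+1})=-\frac{\mu_{j+1}}{\mu_j}.
\]
For $j=1$, the Camacho--Sad formula \eqref{eq:indices} gives $\CS(\F,C_1,q_{12})=C_1^2=-e_1=-\mu_2/\mu_1$. For the inductive step, reciprocity gives
\[
\CS(\F,C_{j},q_{j-1,j})=-\frac{\mu_{j-1}}{\mu_j}.
\]
The formula $\CS(\F,C_j)=C_j^2=-e_j$ then yields
\[
\CS(\F,C_j,q_{j,j+1})=-e_j+\frac{\mu_{j-1}}{\mu_j}=-\frac{\mu_{j+1}}{\mu_j},
\]
by \eqref{eq:mu}. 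The same computation for $C_r$ gives $\CS(\F,C_r,p_T)=-\mu_{r+1}/\mu_r$; this also covers $r=1$, where $\mu_1=1$. Reciprocity then gives
\[
\CS(\F,A,p_T)=-\frac{\mu_r}{\mu_{r+1}}.
\]
This fraction is reduced with $0<\mu_r<\mu_{r+1}$ by \eqref{eq:mu-increasing}.

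\textbf{Intersection with $N_T$.} Since $A$ meets only $C_r$, transversely at one point, we get $N_T\cdot A=b_r=1/\mu_{r+1}$ by \autoref{cor:negative-coefficients}.

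The main obstacle is justifying the local facts at a reduced non-degenerate singularity: that there are exactly two separatrices, both smooth and transverse; that each has $Z$-index $1$; and that their Camacho--Sad indices are reciprocal. I would cite these from \cite{Bru04}, using the local normal form $\lambda_1x\partial_x+\lambda_2y\partial_y+\cdots$ with $\lambda_1/\lambda_2\notin\QQ_{>0}$.
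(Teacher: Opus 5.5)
Your proposal is correct and follows essentially the same route as the paper. You use the $Z$-index count from \eqref{eq:chain-int} to locate the unique extra singularity $p_T$ on $C_r$, and the two-separatrix structure at reduced non-degenerate points to force $A$ through $p_T$ smoothly and transversely. You then run the same Camacho--Sad induction $-e_j+\mu_{j-1}/\mu_j=-\mu_{j+1}/\mu_j$ with reciprocity, and finish with $N_T\cdot A=b_r=1/\mu_{r+1}$.
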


\begin{proof}
Distinct invariant curves can intersect only at a foliation singularity.
At a reduced non-degenerate singularity there are exactly two smooth
transverse separatrices, each with local $Z$-index one. At
$C_j\cap C_{j+1}$, these are $C_j$ and $C_{j+1}$.
By \eqref{eq:chain-int}, for $r\ge2$, there are no other singularities on
$C_1,\ldots,C_{r-1}$, and $C_r$ has exactly one further singularity $p_T$.
For $r=1$, let $p_T$ be the unique singularity on $C_1$.
Thus the curve $A$ meets $T$ only at $p_T$, where it coincides locally
with the other separatrix and has local $Z$-index one.

Put $p_j=C_j\cap C_{j+1}$ for $1\le j<r$, and $p_r=p_T$.
The Camacho--Sad formula on $C_1$ gives the index $-e_1=-\mu_2/\mu_1$
at $p_1$. Since the indices on the two separatrices are reciprocal,
induction along the chain gives
\[
 \CS(\F,C_j,p_j)
 =-e_j+\frac{\mu_{j-1}}{\mu_j}
 =-\frac{\mu_{j+1}}{\mu_j}.
\]
Taking the reciprocal at $p_T$ gives the stated index on $A$.
Finally, $N_T\cdot A=b_r=1/\mu_{r+1}$, and
\eqref{eq:mu-increasing} gives the coprimality assertion.
\end{proof}

\section{Intersections with invariant curves}\label{sec:invariant}

We first work with a relatively minimal foliation $\F$ on a
smooth projective surface $S$. Assume that $K_{\F}$ is pseudo-effective,
and write $K_{\F}=P+N$ for its Zariski decomposition. No assumption is made on
$p_g(\F)$ or on the Kodaira dimension of $\F$.
We prove \autoref{thm:invariant-gap} and determine when equality holds.

Let $A\not\subset\Supp(N)$ be an invariant irreducible curve, and let
\[
  T_i=C_{i,1}+\cdots+C_{i,r_i},
  \qquad 1\le i\le t,
\]
be the maximal $\F$-chains meeting $A$.
For each $T_i$, denote the sequence \eqref{eq:mu} by $\mu_{i,j}$; thus
$\det M_{T_i}=\mu_{i,r_i+1}$. Set
\[
  \nu_A=Z(\F,A)-t\in\ZZ_{\ge0}.
\]

\begin{lemma}\label{lem:invariant-formula}
One has
\begin{equation}\label{eq:orbifold}
  P\cdot A
  =2p_a(A)-2+\nu_A
   +\sum_{i=1}^t\left(1-\frac1{\mu_{i,r_i+1}}\right).
\end{equation}
If $p_a(A)=\nu_A=0$, \correction{then $A$ is a smooth rational curve,} the points $A\cap T_i$
are the only singularities of $\F$ on $A$, and
\begin{equation}\label{eq:cs-integrality}
  A^2=-\sum_{i=1}^t\frac{\mu_{i,r_i}}{\mu_{i,r_i+1}}\in\ZZ.
\end{equation}
Each fraction in this sum is reduced and lies strictly between zero and one.
\end{lemma}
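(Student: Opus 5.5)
Write $P\cdot A=K_{\F}\cdot A-N\cdot A$ and compute each term separately. For the first term, the index formula \eqref{eq:indices} gives
\[
K_{\F}\cdot A=2p_a(A)-2+Z(\F,A)=2p_a(A)-2+\nu_A+t,
\]
using the definition $\nu_A=Z(\F,A)-t$. For the second term, $N$ is supported on the disjoint maximal chains, and $A\not\subset\Supp(N)$. So only the chains $T_1,\dots,T_t$ meeting $A$ contribute to $N\cdot A$. \autoref{lem:terminal} gives $N_{T_i}\cdot A=1/\mu_{i,r_i+1}$ for each such chain. Subtracting yields \eqref{eq:orbifold}.

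One point needs care: that $\nu_A\ge0$ is an integer. By \autoref{lem:terminal}, each $T_i$ meets $A$ at a single point $p_{T_i}$, where $Z(\F,A,p_{T_i})=1$. These points are distinct, because the chains are disjoint. The local $Z$-indices are non-negative integers for a reduced foliation, so
\[
\nu_A=\sum_{p\notin\{p_{T_i}\}}Z(\F,A,p)\ge0 .
\]

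Now assume $p_a(A)=\nu_A=0$. Then $p_a(A)=0$ forces $A$ to be a smooth rational curve. Since $A$ is smooth, it is a smooth invariant branch through each singularity of $\F$ on it, and at such points the local $Z$-index is positive. Hence $\nu_A=0$ means $A$ contains no singularities other than the $p_{T_i}$.

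For \eqref{eq:cs-integrality}, apply the Camacho--Sad formula $\CS(\F,A)=A^2$ from \eqref{eq:indices}, summing only over the points $p_{T_i}$. \autoref{lem:terminal} gives the local index $-\mu_{i,r_i}/\mu_{i,r_i+1}$ at each, so
\[
A^2=-\sum_{i=1}^t\frac{\mu_{i,r_i}}{\mu_{i,r_i+1}}.
\]
This number is an integer because $A^2$ is a self-intersection number. By \eqref{eq:mu-increasing}, each fraction is reduced and satisfies $0<\mu_{i,r_i}<\mu_{i,r_i+1}$, so it lies strictly between $0$ and $1$.

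The only delicate step is the positivity of the local $Z$-index at every singular point of a smooth invariant curve. This is what lets $\nu_A=0$ exclude additional singularities, and the paper records it as a standard fact.
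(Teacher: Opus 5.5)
Your proposal is correct and follows essentially the same route as the paper: you combine the index formula \eqref{eq:indices} with $N\cdot A=\sum_i 1/\mu_{i,r_i+1}$ from \autoref{lem:terminal}, and in the case $p_a(A)=\nu_A=0$ you use positivity of local $Z$-indices on a smooth invariant branch and the Camacho--Sad formula. Your explicit check that $\nu_A\in\ZZ_{\ge0}$, via distinct contact points each of local $Z$-index one, is a detail the paper leaves implicit.
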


\begin{proof}
Combine \eqref{eq:indices} with
$N\cdot A=\sum_i1/\mu_{i,r_i+1}$. If $p_a(A)=0$, \correction{then $A$ is a smooth
rational curve.} If also $\nu_A=0$, positivity of the local $Z$-index on a smooth
invariant branch excludes any further foliation singularity. Formula
\eqref{eq:cs-integrality} and the assertion about its fractions follow from
\eqref{eq:indices} and \autoref{lem:terminal}.
\end{proof}

\begin{proposition}\label{prop:invariant-intersection}
Let $\F$ be a relatively minimal foliation on a smooth projective surface
$S$, with $K_{\F}$ pseudo-effective and Zariski decomposition
$K_{\F}=P+N$. Every invariant irreducible curve $A\subset S$ satisfies
\[
 P\cdot A=0\qquad\text{or}\qquad P\cdot A\ge\frac16.
\]
Equality $P\cdot A=1/6$ holds precisely when
$A\not\subset\Supp(N)$ and
\[
 \mathcorrection{t=2},\qquad p_a(A)=0,\qquad \nu_A=1,\qquad
 (\mu_{1,r_1+1},\mu_{2,r_2+1})=(2,3),
\]
after reordering the two chains.
\end{proposition}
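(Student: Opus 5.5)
First I dispose of $A\subset\Supp(N)$: then $A$ is a component of a maximal chain, and $P\cdot A=0$ because $P$ is orthogonal to every component of $\Supp(N)$ (this is how the coefficients were determined in \autoref{cor:negative-coefficients}). So from now on $A\not\subset\Supp(N)$. Then \autoref{lem:invariant-formula} gives
\[
P\cdot A=2p_a(A)-2+\nu_A+\sum_{i=1}^t\Bigl(1-\frac1{m_i}\Bigr),\qquad m_i:=\mu_{i,r_i+1}\ge2 .
\]
This has the shape of an orbifold Euler characteristic. The proof becomes a case analysis on $p_a(A)$, $\nu_A$ and $t$, using $1-1/m_i\ge 1/2$ and that every $m_i$ is an integer at least $2$.

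If $2p_a(A)-2+\nu_A\ge0$, I argue as follows. Either $t\ge1$, and then $P\cdot A\ge 1/2$. Or $t=0$, and then $P\cdot A=2p_a(A)-2+\nu_A$ is a non-negative integer, hence $0$ or at least $1$. So the only interesting case is $p_a(A)=0$ and $\nu_A\in\{0,1\}$, where $P\cdot A=\nu_A-2+\sum_i(1-1/m_i)$.

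For $\nu_A=1$ the quantity is $-1+\sum_i(1-1/m_i)$, and I split on $t$:
\begin{enumerate}
\item For $t\le1$ the value is negative, which contradicts nefness of $P$ unless it vanishes. It never vanishes, so these cases do not occur.
\item For $t\ge3$ the value is at least $1/2$.
\item For $t=2$ it equals $1-1/m_1-1/m_2$. This is negative for $(2,2)$, hence excluded. It equals $1/6$ exactly for $(2,3)$. Otherwise it is at least $1-1/2-1/4=1/4$ or at least $1-2/3=1/3$, hence bigger than $1/6$.
\end{enumerate}

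For $\nu_A=0$ we are in the classical spherical/Euclidean/hyperbolic triangle-group situation, with value $-2+\sum_i(1-1/m_i)$. Nonnegativity forces $t\ge3$. The minimal positive value of this expression for integers $m_i\ge2$ is $1/42$, attained at $(2,3,7)$, so the bound $1/6$ must come from the extra integrality \eqref{eq:cs-integrality} of \autoref{lem:invariant-formula}. Namely, $\sum_i\mu_{i,r_i}/m_i\in\ZZ$ with each fraction reduced in $(0,1)$.

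This is the main obstacle. I would argue with denominators: for a sum of reduced fractions to be an integer, each prime power dividing some $m_i$ must divide some other $m_j$ too. For $t=3$ I would enumerate the hyperbolic triples with $\sum 1/m_i>1-1/6$ (for instance $(2,3,m)$ with $m\ge7$, $(2,4,m)$ with $m\ge5$, $(2,5,5)$, $(2,5,6)$, $(3,3,m)$ with $m\ge4$, $(3,4,4)$, and so on). I would check that the integrality fails in each: for example, in $(2,3,m)$ the primes $2$ and $3$ each occur only once unless $6\mid m$, and then $1/2+a/3+b/m$ integral still forces a contradiction by a size argument. In general I expect to show that any triple passing the integrality test gives $\sum 1/m_i\le 5/6$. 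For $t\ge4$ the value is at least $-2+4\cdot\tfrac12=0$, and it is positive unless all $m_i=2$. Once some $m_i\ge3$, the value is at least $1/6$, with equality only for $(2,2,2,3)$; that case is excluded by integrality because $1/3$ appears alone.

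The equality characterization then reads off: only $t=2$, $p_a=0$, $\nu_A=1$, $(m_1,m_2)=(2,3)$ gives $1/6$. It is realized since $1/2+1/3$ is not an integer, so no CS obstruction arises in that case. Optimality is deferred to \autoref{ex:invariant-canonical}.
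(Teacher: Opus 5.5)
\textbf{Verdict: there is a genuine gap, in the equality characterization for $t=3$, $\nu_A=0$.} Your reduction matches the paper's. Formula \eqref{eq:orbifold} leaves only $p_a(A)=0$, $\nu_A\le1$. The case $\nu_A=1$ is a short check, and for $\nu_A=0$ the extra input is \eqref{eq:cs-integrality}. For the inequality itself your plan works. The triples with $5/6<\sum1/m_i<1$ are $(2,3,m)$ with $m\ge7$, $(2,4,m)$ with $5\le m\le11$, $(2,5,m)$ with $5\le m\le7$ (your list omits $(2,5,7)$), and $(3,3,m)$ with $m\in\{4,5\}$. Your prime-power criterion kills all of them.

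The problem is what comes next. You only aim to show $\sum1/m_i\le5/6$ for triples satisfying integrality, i.e.\ $P\cdot A\ge1/6$, and then say that equality ``reads off''. It does not. You must also exclude $\sum1/m_i=5/6$, i.e.\ the triples $(2,4,12)$, $(2,6,6)$, $(3,3,6)$, $(3,4,4)$. Any of these, if it passed \eqref{eq:cs-integrality}, would give a curve with $t=3$ and $P\cdot A=1/6$, outside the claimed list. Your prime-power test disposes of three of them, but \emph{not} of $(2,6,6)$: the maximal $2$-adic and $3$-adic valuations are each attained twice. That triple is excluded only by looking at numerators. The reduced fractions with denominator $6$ are $1/6$ and $5/6$, and $\tfrac12+\tfrac a6+\tfrac b6\in\{\tfrac56,\tfrac32,\tfrac{13}6\}$ for $a,b\in\{1,5\}$, never an integer. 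So the divisibility heuristic is necessary but not sufficient, and as written the $t=3$ equality case is left open.

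\textbf{Fix.} Use the paper's device. Order $m_1\le m_2\le m_3$ and work in the \emph{closed} window $5/6\le\sum1/m_i<1$. This forces $m_1\le3$, with $m_2\le6$ if $m_1=2$ and $m_2\le4$ if $m_1=3$. By \eqref{eq:cs-integrality}, the third fraction is determined modulo $\ZZ$ by the first two, hence so is its reduced denominator. This gives a finite table: $6,4,10,3,3,12$ for $(2,3),(2,4),(2,5),(2,6),(3,3),(3,4)$. Add the subcase $\tfrac13+\tfrac23\in\ZZ$, where no third fraction in $(0,1)$ fits. No entry satisfies both the ordering and the window. This settles the inequality and the equality at once, and replaces your open-ended enumeration.

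\textbf{Two smaller points.}
\begin{enumerate}
\item For $\nu_A=1$, $t=2$, the pair $(2,2)$ gives $P\cdot A=0$, not a negative value. It is allowed rather than excluded; it is McQuillan's type~(2) configuration.
\item The converse direction needs no realizability or Camacho--Sad remark. The data plug directly into \eqref{eq:orbifold} to give $-1+\tfrac12+\tfrac23=\tfrac16$. In any case \eqref{eq:cs-integrality} is not available when $\nu_A=1$.
\end{enumerate}
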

\begin{proof}
If $A\subset\Supp(N)$, then $P\cdot A=0$ by the Zariski decomposition.
Otherwise $P\cdot A\ge0$ by nefness. Assume $0<P\cdot A\le1/6$. Formula~\eqref{eq:orbifold} excludes
$p_a(A)\ge1$, since every positive value in that case is at least $1/2$.
\correction{Thus $A$ is a smooth rational curve.} If $\nu_A\ge2$, again every positive value
is at least $1/2$. For $\nu_A=1$, only $t=2$ is possible, and
\[
 0<1-\frac1{\mu_{1,r_1+1}}-\frac1{\mu_{2,r_2+1}}\le\frac16
\]
forces the two determinants to be $2,3$, giving $P\cdot A=1/6$.

Suppose $\nu_A=0$. If $t\le2$, then $P\cdot A<0$, whereas
$t\ge5$ implies $P\cdot A\ge1/2$. If $t=4$, the only tuple giving a value in $(0,1/6]$
is $(2,2,2,3)$. It contradicts \eqref{eq:cs-integrality}, since the first
three fractions sum to $3/2$ and the last is $1/3$ or $2/3$.

For $t=3$, order the chains by increasing determinant. Then
\begin{equation}\label{eq:reciprocal-window}
 \frac56\le
 \frac1{\mu_{1,r_1+1}}+\frac1{\mu_{2,r_2+1}}+\frac1{\mu_{3,r_3+1}}
 <1.
\end{equation}
The first determinant is at most three. If it is two, the second lies
between three and six; if it is three, the second is three or four.
By \eqref{eq:cs-integrality}, the third fraction is congruent modulo
$\ZZ$ to the negative of the sum of the first two. Since all three
fractions are reduced, this determines the third denominator from the
first two fractions. The possibilities are
\[
\begin{array}{c|cccccc}
(\mu_{1,r_1+1},\mu_{2,r_2+1})&(2,3)&(2,4)&(2,5)&(2,6)&(3,3)&(3,4)\\
\mu_{3,r_3+1}&6&4&10&3&3&12
\end{array}
\]
and none satisfies both the ordering and \eqref{eq:reciprocal-window}.
For $(3,3)$, the first two fractions may also sum to an integer.
Adding a third fraction strictly between zero and one would then
contradict \eqref{eq:cs-integrality}.
This excludes $\nu_A=0$ and proves both assertions.
\end{proof}

\begin{proof}[Proof of \autoref{thm:invariant-gap}]
Let $\pi:(S,\F)\to(S_{\min},\F_{\min})$ be a birational morphism
to a relatively minimal model.
\correction{A blow-up between reduced models} has discrepancy zero at a
singular point and one at a regular point. Hence
\[
 K_{\F}=\pi^*K_{\F_{\min}}+E,
\]
where $E$ is effective and $\pi$-exceptional. The divisor
$K_{\F_{\min}}=\pi_*K_{\F}$ is pseudo-effective. Write its Zariski
decomposition as $P_{\min}+N_{\min}$. The divisor $\pi^*P_{\min}$ is nef
and has intersection zero with every component of $\pi^*N_{\min}+E$,
whose support is negative definite.
Uniqueness of the Zariski decomposition therefore gives
\[
 P=\pi^*P_{\min}.
\]
If $A$ is $\pi$-exceptional, then $P\cdot A=0$. Otherwise $\pi_*A$
is an invariant irreducible curve and the projection formula gives
$P\cdot A=P_{\min}\cdot\pi_*A$. The inequality now follows from
\autoref{prop:invariant-intersection}.
The invariant curve in \autoref{ex:invariant-canonical} has $P\cdot A=1/6$,
so the constant is optimal.
\end{proof}

\begin{remark}\label{rem:noninvariant-no-gap}
There is no uniform positive lower bound for the positive intersection
numbers of non-invariant curves with $P$, even when
$p_g(\F)=1$ and $\F$ is relatively minimal of general type.
In \autoref{thm:plane-family}, the last exceptional curve $f_n$
over $U$ is non-invariant and satisfies
\[
 P\cdot f_n=\frac1{n+1}>0,\qquad
 \lim_{n\to\infty}P\cdot f_n=0.
\]
As shown there, this curve survives on a relatively minimal model,
and the projection formula preserves its intersection with the nef part.
\end{remark}

\Needspace{8\baselineskip}
\section{Proof of the lower bound}\label{sec:lower-bounds}

We work on a relatively minimal model of a foliation of general type.
The hypothesis $p_g(\F)>0$ gives an effective divisor $D\in|K_{\F}|$.
We choose $K_{\F}=D$ as the representative in the Zariski decomposition.
When $p_g(\F)=1$, this effective canonical divisor is unique.

\subsection{The effective canonical divisor}\label{sec:core}

\begin{lemma}\label{lem:effective-nef}
One has $\Supp(N)\subset\Supp(D)$ and $P=D-N\ge0$.
\end{lemma}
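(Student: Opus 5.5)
The plan is to compare $D$ with the negative part of the Zariski decomposition of $K_{\F}$ through its restriction to the chain supports. We are on a relatively minimal model, $D\in|K_{\F}|$ is effective, and $K_{\F}=D=P+N$ with $P$ nef and $\Supp(N)$ a disjoint union of maximal $\F$-chains $T$. Write $D=D_T+D'$ for each chain $T$, where $D_T=\sum_j d_jC_j$ is the part of $D$ supported on $T$ and $D'$ has no component in $T$. Then $D'$ is effective and meets each $C_j$ non-negatively, so $D'\cdot C_j\ge0$ for all $j$.

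First I will show that the coefficients $d_j$ dominate the coefficients $b_j$ of $N_T$. Since $P\cdot C_j=0$, we have $D\cdot C_j=K_{\F}\cdot C_j=N_T\cdot C_j$. Hence $(D_T-N_T)\cdot C_j=-D'\cdot C_j\le0$ for every $j$. In matrix form, with $\mathbf d-\mathbf b$ the coefficient vector of $D_T-N_T$, this reads $M_T(\mathbf d-\mathbf b)=\mathbf w$ with $\mathbf w\ge0$ componentwise. \autoref{lem:inverse} says every entry of $M_T^{-1}$ is positive, so $\mathbf d-\mathbf b=M_T^{-1}\mathbf w\ge0$. Thus $d_j\ge b_j>0$ for all $j$, since $b_j=\xi_j/\mu_{r+1}>0$ by \autoref{cor:negative-coefficients}.

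This gives both claims together. Every $C_j$ occurs in $D$ with positive coefficient, so $\Supp(N)\subset\Supp(D)$. Moreover $P=D-N$ has coefficient $d_j-b_j\ge0$ along each chain component, and the coefficients of $D$ elsewhere are untouched, so $P=D-N\ge0$. Here $P$ is read as the $\QQ$-divisor $D-N$ representing the nef part.

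The only delicate point is the sign bookkeeping in $D'\cdot C_j\ge0$, which requires that $D'$ share no component with $T$. That holds by construction, because $D'$ is defined as the part of $D$ off $T$. Disjointness of distinct chains ensures that the parts of $D$ on other chains also contribute non-negatively, and they do, since those chains do not meet $T$. So no real obstacle is expected beyond invoking positivity of $M_T^{-1}$.
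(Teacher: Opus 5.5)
Your proof is correct, but it follows a different route from the paper's. The paper argues along each chain directly from the intersection numbers \eqref{eq:chain-int}. First, $D\cdot C_1=-1$ forces $C_1$ into $D$. Next, if $C_{j-1}$ occurs in $D$ while $C_j$ does not, then $D\cdot C_j>0$, contradicting $D\cdot C_j=0$. Once every $C_j$ occurs, $D-N\ge0$ follows because the coefficient of $C_j$ in $D$ is a positive integer while its coefficient in $N$ is less than one. You instead write $M_T(\mathbf d-\mathbf b)=(D'\cdot C_j)_j\ge0$ and apply the positivity of $M_T^{-1}$ from \autoref{lem:inverse}. The paper's argument is more elementary, since it needs no inverse matrix, but it relies on integrality of $D$ and on $\lfloor N\rfloor=0$. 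Yours needs neither, so it applies verbatim to any effective $\QQ$-divisor numerically equivalent to $K_{\F}$; it is the chain case of the general fact that the negative part of an effective divisor is bounded by it. It also already yields the identity $\mathbf a-\mathbf b=M_T^{-1}\mathbf s$, which the paper derives only afterwards in \autoref{lem:chain-equations} and then uses in \autoref{lem:boundary}. Your $D'$, the part of $D$ off the single chain $T$, differs from the paper's $D'$ in \eqref{eq:core}, which removes all of $\Supp(N)$. Since the chains are disjoint, the two give the same vector $\mathbf w=\mathbf s$.
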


\begin{proof}
Fix a maximal chain.  Since $D\cdot C_1=-1$, the curve $C_1$ must occur in
$D$.  If $C_{j-1}$ occurs in $D$ but $C_j$ does not, then the contribution
of $C_{j-1}$ gives $D\cdot C_j>0$, contradicting
$D\cdot C_j=0$.  Hence every $C_j$ occurs in $D$.  Its coefficient in $D$
is a positive integer, while its coefficient in $N$ is smaller than one;
therefore $D-N\ge0$.
\end{proof}

Write
\begin{equation}\label{eq:core}
  D=D_N+D',
  \qquad
  D'=\sum_A m_AA,
\end{equation}
where $D_N$ is supported on $N$, $D'$ has no component in $\Supp(N)$, and
$m_A\in\ZZ_{>0}$.

The Zariski decomposition gives $P\cdot N=P\cdot D_N=0$, hence
\begin{equation}\label{eq:core-volume}
  P^2=P\cdot D'
     =\sum_{A\subset\Supp(D')}m_A(P\cdot A).
\end{equation}
Every summand is non-negative by nefness.

For a maximal chain $T=C_1+\cdots+C_r$, we use the matrix and sequences
of \autoref{sec:chains}.
Let $a_j$ and $b_j$ be the coefficients of $D$ and $N$ along $C_j$, and set
\[
  s_j=D'\cdot C_j.
\]
Write $\mathbf a=(a_j)^t$, $\mathbf b=(b_j)^t$, and
$\mathbf s=(s_j)^t$.

\begin{lemma}\label{lem:chain-equations}
Let $\mathbf e_j$ denote the $j$-th standard basis vector of $\mathbb R^r$.
Then
\begin{equation}\label{eq:linear-chain}
  M_T\mathbf a=\mathbf e_1+\mathbf s,
  \qquad
  M_T\mathbf b=\mathbf e_1,
  \qquad
  \mathbf a-\mathbf b=M_T^{-1}\mathbf s.
\end{equation}
The coefficient $a_r$ of $C_r$ in $D$ satisfies
\begin{equation}\label{eq:integrality}
 a_r=\frac{1+\sum_{j=1}^r s_j\mu_j}{\mu_{r+1}}\in\ZZ_{>0}.
\end{equation}
Consequently,
\begin{equation}\label{eq:det-bound}
 \mu_{r+1}\mid\left(1+\sum_j s_j\mu_j\right),\qquad
 \mu_{r+1}\le1+\sum_j s_j\mu_j.
\end{equation}
\end{lemma}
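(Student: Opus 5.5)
The plan is to compute $D\cdot C_j$ in two ways. Write $D=D_T+D_{\mathrm{rest}}$, where $D_T=\sum_j a_jC_j$ is the part of $D_N$ supported on $T$. The remaining part of $D_N$ lives on other maximal chains, which are disjoint from $T$, so it meets each $C_j$ trivially. Hence $D\cdot C_j=\sum_k a_k\,C_k\cdot C_j+s_j$. On the other hand $D\equiv K_{\F}$, so \eqref{eq:chain-int} gives $D\cdot C_j=-\delta_{1j}$. Rewriting with $M_T=(-C_i\cdot C_j)$ yields $-(M_T\mathbf a)_j+s_j=-\delta_{1j}$, which is the first equation $M_T\mathbf a=\mathbf e_1+\mathbf s$. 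The second equation $M_T\mathbf b=\mathbf e_1$ was already obtained in the proof of \autoref{cor:negative-coefficients}. Subtracting the two and inverting $M_T$, which is legitimate since $\det M_T>0$, gives $\mathbf a-\mathbf b=M_T^{-1}\mathbf s$.

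For \eqref{eq:integrality}, take the $r$-th coordinate: $a_r=b_r+\sum_j (M_T^{-1})_{rj}s_j$. By \eqref{eq:b-formulas}, $b_r=1/\mu_{r+1}$ and $(M_T^{-1})_{rj}=\mu_j/\mu_{r+1}$, which is exactly the claimed formula. The number $a_r$ is an integer because $D$ is an integral divisor. It is positive because \autoref{lem:effective-nef} shows every $C_j$ occurs in $D$; alternatively, $a_r\ge b_r>0$, since $s_j\ge0$ and $M_T^{-1}$ has positive entries.

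Finally, the divisibility in \eqref{eq:det-bound} is a restatement of integrality. For the inequality, note that $1+\sum_j s_j\mu_j$ is a positive integer divisible by $\mu_{r+1}$, so it is at least $\mu_{r+1}$; equivalently, $a_r\ge1$. The only point needing care is $s_j\ge0$: this holds because $D'$ is effective and has no component in $\Supp(N)$, so $D'$ and $C_j$ share no common component. I do not expect any genuine obstacle; the argument is bookkeeping with the inverse-matrix formulas already established.
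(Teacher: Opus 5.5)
Your proposal is correct and follows the paper's proof essentially step for step: you read off $M_T\mathbf a=\mathbf e_1+\mathbf s$ from $D\cdot C_j=K_{\F}\cdot C_j=-\delta_{j1}$, take $M_T\mathbf b=\mathbf e_1$ from $P\cdot C_j=0$, and extract $a_r$ from the last row of $M_T^{-1}$ via \eqref{eq:inverse-matrix}. Your alternative positivity argument, $a_r\ge b_r>0$ because $s_j\ge0$, is a harmless variant of the paper's appeal to $D$ being an effective integral divisor containing $T$.
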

\begin{proof}
With $a_0=a_{r+1}=0$, equation \eqref{eq:chain-int} gives
\[
  -e_ja_j+a_{j-1}+a_{j+1}+s_j=-\delta_{j1}.
\]
These equalities give $M_T\mathbf a=\mathbf e_1+\mathbf s$. Since
$P\cdot C_j=0$, one also has
$N\cdot C_j=D\cdot C_j=-\delta_{j1}$, giving
$M_T\mathbf b=\mathbf e_1$. Subtraction proves the last equality
in \eqref{eq:linear-chain}.

The last row of $\mathbf a=M_T^{-1}(\mathbf e_1+\mathbf s)$,
given by \eqref{eq:inverse-matrix}, yields \eqref{eq:integrality}.
The coefficient $a_r$ is a positive integer because $D$ is an effective
integral divisor containing $T$. This proves \eqref{eq:det-bound}.
\end{proof}

\subsection{Non-invariant components}

Let $A\subset\Supp(D')$ be non-invariant, of multiplicity $m$ in $D$, and
write $D=mA+R$. From
\[
  K_{\F}\cdot A=mA^2+R\cdot A,
  \qquad
  \tang(\F,A)=K_{\F}\cdot A+A^2,
\]
we obtain
\begin{equation}\label{eq:transverse-K}
  K_{\F}\cdot A=\frac{m\,\tang(\F,A)+R\cdot A}{m+1}.
\end{equation}

\begin{lemma}\label{lem:boundary}
If $A\cdot C_j=k>0$, then
\begin{equation}\label{eq:boundary-positive}
  \frac{a_j}{m+1}-b_j\ge0.
\end{equation}
Equality implies $j=1$, $k=1$, and $\mathbf s=m\mathbf e_1$.
\end{lemma}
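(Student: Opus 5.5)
The plan is to compare $\mathbf a$ and $\mathbf b$ through the identity $\mathbf a-\mathbf b=M_T^{-1}\mathbf s$ from \autoref{lem:chain-equations}, using the explicit inverse from \autoref{lem:inverse} and the formula $(M_T^{-1})_{jj}=\mu_jb_j$ from \autoref{cor:negative-coefficients}. The inequality \eqref{eq:boundary-positive} is equivalent to $a_j-b_j\ge m\,b_j$, so it suffices to bound the $j$-th entry of $M_T^{-1}\mathbf s$ from below by $m\,b_j$.

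First I record the signs. Since $D'$ has no component in $\Supp(N)\supset T$, every $s_l=D'\cdot C_l$ is a non-negative integer. Moreover $D'=mA+R'$ with $R'$ effective and having no component in $T$. Hence
\[
 s_j\ge m\,A\cdot C_j=mk\ge m .
\]
By \autoref{lem:inverse} all entries of $M_T^{-1}$ are positive, and by \autoref{cor:negative-coefficients} we have $b_j=\xi_j/\mu_{r+1}>0$.

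Next, expanding the $j$-th row and then discarding all terms except $l=j$ gives
\[
 a_j-b_j=\sum_{l}(M_T^{-1})_{jl}s_l\;\ge\;(M_T^{-1})_{jj}s_j\;=\;\mu_jb_j\,s_j\;\ge\;\mu_j\,k\,m\,b_j\;\ge\;m\,b_j ,
\]
where the last step uses $\mu_j\ge\mu_1=1$ from \eqref{eq:mu-increasing} and $k\ge1$. Rearranging gives $a_j\ge(m+1)b_j$, which is \eqref{eq:boundary-positive}.

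For the equality case, every inequality in the chain above must be an equality. Since the entries $(M_T^{-1})_{jl}$ are strictly positive, the first equality forces $s_l=0$ for all $l\ne j$. The second forces $s_j=mk$. The third forces $\mu_jk=1$, because $m\,b_j>0$. Thus $\mu_j=1$, and strict monotonicity of the $\mu$'s gives $j=1$; also $k=1$. Then $s_1=m$ and $\mathbf s=m\mathbf e_1$.

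There is no real obstacle here. The only point needing care is that the argument requires strict positivity of $m\,b_j$ and of the off-diagonal entries of $M_T^{-1}$ to extract the equality conditions, and \autoref{lem:inverse} provides exactly that.
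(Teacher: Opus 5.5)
Your proposal is correct and follows essentially the same route as the paper: drop the off-diagonal terms of $\mathbf a-\mathbf b=M_T^{-1}\mathbf s$ using positivity of $M_T^{-1}$, then apply $(M_T^{-1})_{jj}=\mu_jb_j$, $s_j\ge mk$ and $\mu_j\ge1$. The equality conditions come from the same chain of inequalities, and you justify the strictness at each step a bit more explicitly than the paper does.
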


\begin{proof}
Since $mA$ is part of $D'$, one has $s_j\ge mk$. By
\eqref{eq:linear-chain}, positivity of $M_T^{-1}$, and
\eqref{eq:b-formulas},
\[
  a_j-b_j
  \ge(M_T^{-1})_{jj}s_j
  \ge mk\mu_jb_j
  \ge mb_j.
\]
This proves \eqref{eq:boundary-positive}. Equality requires $k=1$,
$\mu_j=1$, $s_j=m$, and $s_i=0$ for $i\ne j$. Hence $j=1$ and
$\mathbf s=m\mathbf e_1$.
\end{proof}

\begin{proposition}\label{prop:tangency-decomposition}
For every non-invariant component $A$ of multiplicity $m$ in $D'$,
\begin{equation}\label{eq:transverse-decomposition}
\begin{split}
  m(P\cdot A)={}&\frac{m^2}{m+1}\tang(\F,A)
  +\sum_{A'\ne A}\frac{mm_{A'}}{m+1}(A'\cdot A)\\
  &+\sum_T\sum_{j=1}^{r(T)}
  m(A\cdot C_{T,j})
  \left(\frac{a_{T,j}}{m+1}-b_{T,j}\right).
\end{split}
\end{equation}
Every term on the right is non-negative.
Here the first sum ranges over all components $A'\ne A$ of $D'$, and the
second over all maximal chains in $\Supp(N)$.
\end{proposition}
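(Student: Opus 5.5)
We compute $P\cdot A = K_{\F}\cdot A - N\cdot A$ and expand each piece using the decomposition of $D$. By \eqref{eq:transverse-K} with $R=D-mA=\sum_{A'\ne A}m_{A'}A'+D_N$, we get
\[
 K_{\F}\cdot A=\frac{m\,\tang(\F,A)}{m+1}+\frac{1}{m+1}\sum_{A'\ne A}m_{A'}(A'\cdot A)+\frac{1}{m+1}D_N\cdot A .
\]
Here $D_N=\sum_T\sum_j a_{T,j}C_{T,j}$, since by \autoref{lem:effective-nef} every curve of every maximal chain occurs in $D$ with coefficient $a_{T,j}$. Hence $D_N\cdot A=\sum_T\sum_j a_{T,j}(A\cdot C_{T,j})$. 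On the other hand, $N=\sum_T\sum_j b_{T,j}C_{T,j}$, so $N\cdot A=\sum_T\sum_j b_{T,j}(A\cdot C_{T,j})$.

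Subtracting and multiplying by $m$ gives exactly \eqref{eq:transverse-decomposition}. The chain contributions combine into $m(A\cdot C_{T,j})\bigl(\tfrac{a_{T,j}}{m+1}-b_{T,j}\bigr)$, the other components contribute $\tfrac{mm_{A'}}{m+1}(A'\cdot A)$, and the tangency term is $\tfrac{m^2}{m+1}\tang(\F,A)$. This is pure bookkeeping. The only point to watch is that $A\not\subset\Supp(N)$ (it lies in $D'$) and $A'\ne A$, so no self-intersection terms sneak into $R\cdot A$.

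For non-negativity we treat the three kinds of terms separately:
\begin{enumerate}
\item The tangency term is non-negative by \eqref{eq:tangency}, since $A$ is not invariant.
\item For distinct irreducible curves $A'\ne A$ we have $A'\cdot A\ge0$, and $m,m_{A'}>0$.
\item For the chain terms, either $A\cdot C_{T,j}=0$, in which case the term vanishes, or $A\cdot C_{T,j}>0$. In the latter case \autoref{lem:boundary} gives $\tfrac{a_{T,j}}{m+1}-b_{T,j}\ge0$.
\end{enumerate}

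The only substantive input is therefore \autoref{lem:boundary}, which is already established, so there is no real obstacle. The one step to double-check is the derivation of \eqref{eq:transverse-K} itself. Substituting $A^2=\tang(\F,A)-K_{\F}\cdot A$ into $K_{\F}\cdot A=mA^2+R\cdot A$ gives $(m+1)K_{\F}\cdot A=m\tang(\F,A)+R\cdot A$, which is what the expansion above uses.
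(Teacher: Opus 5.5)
Your proposal is correct and follows the paper's proof essentially step for step. You substitute $P=K_{\F}-N$ into \eqref{eq:transverse-K}, expand $R=\sum_{A'\ne A}m_{A'}A'+D_N$ and $N$ chain by chain, and collect the two terms involving each $C_{T,j}$. Non-negativity then comes from \eqref{eq:tangency}, the effectivity of $D'$ (so that $A'\cdot A\ge0$), and \autoref{lem:boundary}.
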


\begin{proof}
Since $P=K_{\F}-N$, equation \eqref{eq:transverse-K} gives
\[
  m(P\cdot A)
  =\frac{m^2}{m+1}\tang(\F,A)
   +\frac{m}{m+1}(R\cdot A)-m(N\cdot A).
\]
Now expand
\[
  R=\sum_{A'\ne A}m_{A'}A'+\sum_T\sum_ja_{T,j}C_{T,j},
  \qquad
  N=\sum_T\sum_jb_{T,j}C_{T,j}.
\]
Collecting the two terms involving each $C_{T,j}$ gives
\eqref{eq:transverse-decomposition}.  Its last terms are non-negative by
\autoref{lem:boundary}; the remaining terms are non-negative by
\eqref{eq:tangency} and the effectivity of $D'$.
\end{proof}

\Needspace{12\baselineskip}
\begin{corollary}\label{cor:small-components}
Suppose that
\begin{equation}\label{eq:small-assumption}
  0<P^2\le\frac16.
\end{equation}
Then an invariant component $A$ of $D'$ has $P\cdot A=0$ or $1/6$.
In the latter case, $P^2=1/6$ and its coefficient in $D$ is one.
Every non-invariant component $A$ is smooth, avoids $\Sing(\F)$, and is
everywhere transverse to $\F$. Moreover,
\begin{equation}\label{eq:disjoint-core}
  \tang(\F,A)=0,
  \qquad
  A\cdot A'=0
  \quad(A'\ne A,\ A'\subset\Supp(D')).
\end{equation}
\end{corollary}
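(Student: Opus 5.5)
The plan is to use the volume identity \eqref{eq:core-volume}, $P^2=\sum_A m_A(P\cdot A)$, in which every summand is non-negative, and to compare the total with the hypothesis $P^2\le 1/6$. Each component $A$ of $D'$ then satisfies $m_A(P\cdot A)\le P^2\le 1/6$.

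\emph{Invariant components.} For an invariant component $A$ of $D'$, \autoref{prop:invariant-intersection} gives either $P\cdot A=0$ or $P\cdot A\ge 1/6$. In the second case we get $1/6\le m_A(P\cdot A)\le P^2\le 1/6$. This forces $m_A=1$ and $P\cdot A=1/6=P^2$, and it also forces every other summand to vanish.

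\emph{Non-invariant components.} Here the plan is to turn \autoref{prop:tangency-decomposition} into a lower bound of $1/6$ unless all its terms vanish. Let $A$ have multiplicity $m$. The first term is $\frac{m^2}{m+1}\tang(\F,A)$, which is at least $1/2$ if $\tang(\F,A)\ge1$. The second sum involves other components $A'$ of $D'$. If $A\cdot A'\ge 1$, then since $m,m_{A'}\ge1$ the corresponding term is at least $\frac{m}{m+1}\ge\frac12$. In both cases $P^2\ge m(P\cdot A)\ge 1/2$, which is a contradiction. Hence $\tang(\F,A)=0$ and $A\cdot A'=0$ for $A'\ne A$.

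The chain terms are the step I expect to be the main obstacle, since $\frac{a_j}{m+1}-b_j$ can be a small fraction. I would first try to argue that $A$ is disjoint from $\Supp(N)$ anyway. Suppose $A\cdot C_j>0$. By \autoref{lem:terminal}, each maximal chain carries exactly one singularity not lying on another chain curve, namely the point $p_T$ on $C_r$, or all of its singularities lie on chain curves. The curve $A$, being everywhere transverse with $\tang=0$, must avoid $\Sing(\F)$: tangency vanishes only if $A$ passes through no singular point, because at a singular point the tangency order is positive. I would check this against the definition of $\tang$ in \cite{Bru04}, where $\tang(\F,A,p)>0$ if $p\in\Sing(\F)\cap A$. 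Then $A$ meets $C_j$ at regular points of $\F$. Since $C_j$ is invariant, this gives a tangency there, because the leaf through a regular point of $C_j$ is $C_j$ itself, and $A$ crossing it transversely is allowed. So this route alone does not exclude the intersection.

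In that case I fall back on numerics. The bound from the proof of \autoref{lem:boundary} gives
\[
  \frac{a_j}{m+1}-b_j\ge \frac{(mk\mu_j-m)b_j+\text{(rest)}}{m+1}.
\]
I would combine this with the integrality \eqref{eq:integrality} of $a_j$. Since $a_j\ge1$ is an integer while $b_j=\xi_j/\mu_{r+1}<1$, I would show that $a_j/(m+1)-b_j$ is either $0$ or at least $1/6$ after multiplication by $m$. Equality is excluded unless $j=1$, $k=1$ and $\mathbf s=m\mathbf e_1$, and in that case $a_1=(m+1)b_1$, so the term is $0$ and harmless. Once this is done, every non-invariant component has all right-hand terms zero except possibly these equality cases.

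Finally, smoothness follows from $\tang(\F,A)=0$. At a singular point of the curve $A$, the foliation cannot be transverse to all branches with zero contact, so $\tang$ would be positive. At a point of $\Sing(\F)$ the local tangency is positive as well. Hence $A$ is smooth, avoids $\Sing(\F)$, and is everywhere transverse to $\F$, which gives \eqref{eq:disjoint-core}.
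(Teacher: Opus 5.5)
Your treatment of the invariant components, your $\ge\frac12$ bounds forcing $\tang(\F,A)=0$ and $A\cdot A'=0$ (valid because all other terms in \autoref{prop:tangency-decomposition} and in \eqref{eq:core-volume} are non-negative), and your deduction of smoothness, avoidance of $\Sing(\F)$ and transversality from positivity of the local tangency numbers are correct, and this is exactly the paper's argument. You should simply drop the paragraph on chain terms: the corollary asserts nothing about them, their non-negativity from \autoref{lem:boundary} is all that is used, and transverse crossings of chain curves at regular points genuinely occur (case (ii) of \autoref{thm:equality-core}); moreover your proposed per-term dichotomy ``$0$ or $\ge\frac16$'' is unproved and fails as a general numerical statement, e.g.\ a single $(-(M+2))$-curve met transversely by components of multiplicities $1$ and $M$ has $a_1=1$ and a term $M\bigl(\frac1{M+1}-\frac1{M+2}\bigr)\to0$.
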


\begin{proof}
By \eqref{eq:core-volume}, $m_A(P\cdot A)\le1/6$ for every component
$A$ of $D'$. The assertion for invariant components follows from
\autoref{prop:invariant-intersection}. For a non-invariant component $A$ of multiplicity $m$, if
$\tang(\F,A)>0$, its contribution to
\eqref{eq:transverse-decomposition} is at least $m^2/(m+1)\ge1/2$.
If $A\cdot A'>0$ for some other component $A'$ of $D'$, the corresponding
term is at least $mm_{A'}/(m+1)\ge1/2$.
Thus \eqref{eq:disjoint-core} holds.

If $f=0$ is a reduced local equation of $A$ and $v$ a local saturated
vector field defining $\F$, the local tangency number at $x\in A$ is
\[
 \dim_{\CC}\OO_{S,x}/(f,v(f)).
\]
It is positive whenever $A$ is singular at $x$, the vector field vanishes
at $x$, or $A$ is tangent to $\F$ at $x$. Since its sum is zero, none of
these occurs. The chain components are invariant, so their intersections
with $A$ are transverse as well.
\end{proof}

\subsection{Chains meeting invariant and non-invariant components}

For the remainder of the proof, assume \eqref{eq:small-assumption}.
We use McQuillan's classification of curves $C$ with $P\cdot C=0$
and their invariance \cite[III.3.2 and Remark~III.2.2]{McQ08}; see also
\cite[Proposition~2.15 and Theorem~2.16]{CF18}.

\begin{lemma}[McQuillan]\label{lem:zero-types}
Let $\F$ be a relatively minimal foliation of general type on a smooth
surface $S$, and let $K_{\F}=P+N$ be its Zariski decomposition.
Let $Z$ be the union of all curves $A$ such that $P\cdot A=0$.
Then $Z$ is the union of
\begin{enumerate}[label=\textup{(\arabic*)}]
  \item the support of $N$;
  \item disjoint chains of rational curves, none of which is contained
    in $\Supp(N)$;
  \item cycles $\Gamma$ of rational curves such that
    $\Sing(\F)\cap\Gamma=\Sing(\Gamma)$;
  \item single rational nodal curves $A$ such that
    $\Sing(\F)\cap A=\Sing(A)$.
\end{enumerate}
Moreover, a chain $\mathcal C$ of type~\textup{(2)} is either disjoint from
$\Supp(N)$ or meets exactly two connected components of $\Supp(N)$.
Each of these consists of a smooth rational curve $E_i$ with
$E_i^2=-2$, $i=1,2$, and both $E_1$ and $E_2$ meet $\mathcal C$
transversely along the same end component $A$, at distinct points
$p_1,p_2$, with
\begin{equation*}
  N|_A=\frac12p_1+\frac12p_2.
\end{equation*}
Every irreducible component of $Z$ is $\F$-invariant. The configurations
of types~\textup{(3)} and~\textup{(4)} meet no invariant curve outside
themselves; in particular, they are disjoint from $\Supp(N)$.
\end{lemma}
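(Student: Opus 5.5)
This lemma is attributed to McQuillan, so the plan is to deduce it from \cite[III.3.2 and Remark~III.2.2]{McQ08} and \cite[Proposition~2.15 and Theorem~2.16]{CF18}, checking only the finer statements against the chain calculus of \autoref{sec:chains}.

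\emph{Step 1: invariance of $Z$.} Let $A$ be an irreducible curve with $P\cdot A=0$. Components of $\Supp(N)$ are invariant by definition of the maximal chains. Suppose $A\not\subset\Supp(N)$ were non-invariant. Then $K_{\F}\cdot A=P\cdot A+N\cdot A=N\cdot A\ge0$, and $\tang(\F,A)=K_{\F}\cdot A+A^2\ge0$. Since $\F$ is of general type, $P^2>0$, so the Hodge index theorem forces $A^2<0$. On a relatively minimal model this is exactly the situation McQuillan excludes: he shows curves with $K_{\F}\cdot A\le N\cdot A$ and $A^2<0$ are invariant. I will cite this rather than reprove it.

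\emph{Step 2: the configurations.} For an invariant $A\not\subset\Supp(N)$ with $P\cdot A=0$, \eqref{eq:orbifold} gives
\[
 0=2p_a(A)-2+\nu_A+\sum_i\Bigl(1-\tfrac1{\mu_{i,r_i+1}}\Bigr).
\]
Each summand in the sum is at least $1/2$, so only a few cases survive:
\begin{enumerate}
\item $p_a(A)=1$ with $\nu_A=t=0$: a nodal rational curve or a curve on a cycle with singularities only at the nodes, giving types (3) and (4). Here elliptic curves are excluded by McQuillan's theorem, since an elliptic leaf with $K_{\F}\cdot A=0$ would have to lie on a chain or cycle.
\item $p_a(A)=0$ with $\nu_A+\sum_i(1-1/\mu_i)=2$: the possibilities are $\nu_A=2,t=0$; $\nu_A=1$ with $t=2$ and $\mu=(2,2)$; and $\nu_A=0$ with $t=4$, $\mu=(2,2,2,2)$. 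The case $t=4$ is impossible for chains of type (2) by \eqref{eq:cs-integrality} or by McQuillan's analysis.
\end{enumerate}
In the case $\nu_A=1$, $t=2$, $\mu=(2,2)$, each chain has determinant $2$, so it is a single $(-2)$-curve. Lemma~\ref{lem:terminal} then gives $N\cdot A=\tfrac12+\tfrac12$ with transverse intersections at distinct points, and $N|_A=\tfrac12p_1+\tfrac12p_2$.

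\emph{Step 3: remaining assertions.} The disjointness statements (chains of type (2) are disjoint from each other; types (3) and (4) meet no outside invariant curve) follow because every singular point of $\F$ on such a curve is already used by the configuration. That is, $Z(\F,A)$ is fully accounted for, and distinct invariant curves meet only at singularities of $\F$. In particular, for an end curve the count $\nu_A=1$ leaves exactly one free singularity, which connects it to the next curve of the chain.

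The main obstacle is Step 1 together with the exclusion of elliptic and $t=4$ configurations. Both rely on McQuillan's global argument, namely contractibility and the properties of the resulting canonical model, so I will cite them rather than re-derive them.
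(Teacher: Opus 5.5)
The paper does not prove this lemma; it quotes it from \cite[III.3.2 and Remark~III.2.2]{McQ08} and \cite[Proposition~2.15 and Theorem~2.16]{CF18}. Where you also cite these, you agree with the paper. The verification you carry out in Step~2, however, has a genuine gap.

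\textbf{What goes wrong in Step~2.}
\begin{enumerate}
\item For $p_a(A)=0$ and $\nu_A=0$, formula \eqref{eq:orbifold} becomes $t-\sum_i1/\mu_{i,r_i+1}=2$. Its solutions are $t=4$ with $(2,2,2,2)$ \emph{and} $t=3$ with $(3,3,3)$, $(2,4,4)$, $(2,3,6)$. You omit the $t=3$ cases entirely.
\item Formula \eqref{eq:cs-integrality} rules out none of these. With chains $[-2]$, $[-3]$, $[-4]$, $[-6]$, the fractions sum to $4\cdot\frac12$, $3\cdot\frac13$, $\frac12+\frac14+\frac14$ and $\frac12+\frac13+\frac16$, all integers. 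So your stated reason for excluding $t=4$ fails.
\item Components of a type~(3) cycle are smooth rational curves with $p_a=0$, $\nu_A=2$, $t=0$. They are not curves with $p_a=1$.
\item Nothing in your argument prevents both ends of a type~(2) chain from meeting a pair of $(-2)$-curves. Excluding this is needed for ``exactly two connected components''.
\end{enumerate}

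\textbf{The missing ingredient} is the Hodge index theorem. Since $P^2>0$, the intersection matrix of any finite set of curves $C$ with $P\cdot C=0$ is negative definite. If $\nu_A=0$, set
\[
 X=A+\sum_{i=1}^t\sum_{j=1}^{r_i}(M_{T_i}^{-1})_{j\,r_i}\,C_{i,j}.
\]
Then $X\cdot C_{i,j}=0$ for all $i,j$. By \autoref{lem:inverse} and \eqref{eq:cs-integrality}, $X^2=A^2+\sum_i\mu_{i,r_i}/\mu_{i,r_i+1}=0$. This contradiction disposes of all $t=3$ and $t=4$ cases at once.

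A smooth elliptic $A$ with $Z(\F,A)=0$ carries no singularity, so $A^2=\CS(\F,A)=0$. This is excluded the same way, with no appeal to McQuillan.

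For a chain $E_1,E_2-A_1-\cdots-A_k-E_3,E_4$ with $k\ge2$:
\begin{itemize}
\item Every junction $A_l\cap A_{l+1}$ has $Z$-index one on both curves. Since a saddle-node has $Z$-index at least two on its weak separatrix, every junction is non-degenerate.
\item Let $\lambda_l\neq0$ be the Camacho--Sad index of $A_l$ at $A_l\cap A_{l+1}$. Then $A_1^2=\lambda_1-1$, $A_l^2=\lambda_{l-1}^{-1}+\lambda_l$ for $1<l<k$, and $A_k^2=\lambda_{k-1}^{-1}-1$.
\item Setting $x_1=1$ and $x_{l+1}=-\lambda_lx_l$ gives the nonzero kernel vector $\frac12(E_1+E_2)+\sum_lx_lA_l+\frac{x_k}{2}(E_3+E_4)$ of the intersection matrix. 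This again contradicts negative definiteness.
\end{itemize}

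The same device also proves Step~1 without citation. For a non-invariant $A$ with $P\cdot A=0$, put $\mathbf s_T=(A\cdot C_{T,j})_j$. The divisor $A+\sum_T\sum_j(M_T^{-1}\mathbf s_T)_jC_{T,j}$ is nonzero, effective and $P$-trivial, so it has negative square. This gives $-A^2>\sum_T\mathbf s_T^tM_T^{-1}\mathbf s_T$. By \autoref{lem:inverse}, the right-hand side is at least $N\cdot A$. Hence $P\cdot A=\tang(\F,A)-A^2-N\cdot A>0$, a contradiction.
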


\begin{lemma}\label{lem:mixed-cost}
Suppose an invariant component $A$ of $D'$, with coefficient $c_A$,
meets a maximal chain $T=C_1+\cdots+C_r$. If a non-invariant component of
coefficient $m$ meets $C_j$, its contribution at each such point to
\eqref{eq:transverse-decomposition} is at least
\begin{equation*}
  \frac{c_Am\mu_j}{\mu_{r+1}(m+1)}
  \ge\frac{c_A}{2\mu_{r+1}}.
\end{equation*}
\end{lemma}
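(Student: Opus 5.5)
The plan is to bound the chain term $m(A_0\cdot C_j)\bigl(\tfrac{a_j}{m+1}-b_j\bigr)$ of \eqref{eq:transverse-decomposition} by estimating $a_j-b_j$ from below with the formula $\mathbf a-\mathbf b=M_T^{-1}\mathbf s$ of \autoref{lem:chain-equations}, feeding in the contributions of both $A$ and the non-invariant component. Write $A_0$ for the non-invariant component of coefficient $m$ that meets $C_j$. It is distinct from $A$, since one is invariant and the other is not. The term is a sum over the points of $A_0\cap C_j$, each counted with positive local intersection multiplicity. So it suffices to show $\tfrac{a_j}{m+1}-b_j\ge \tfrac{c_A\mu_j}{\mu_{r+1}(m+1)}$ and then multiply by $m$ times the local multiplicity, which is at least one.

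First I record the entries of $\mathbf s=(D'\cdot C_i)_i$. By \autoref{lem:terminal}, the invariant curve $A$ meets $T$ only along $C_r$, so $s_r\ge c_A$. Since $mA_0\le D'$ and $A_0\cdot C_j\ge1$, we get $s_j\ge m$, and every other $s_i$ is non-negative. When $j=r$ these two contributions add, giving $s_r\ge c_A+m$.

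Because all entries of $M_T^{-1}$ are positive (\autoref{lem:inverse}), in either case
\[
a_j-b_j=(M_T^{-1}\mathbf s)_j\ge (M_T^{-1})_{jr}\,c_A+(M_T^{-1})_{jj}\,m .
\]
Corollary \autoref{cor:negative-coefficients} supplies the entries: $(M_T^{-1})_{jr}=(M_T^{-1})_{rj}=\mu_j/\mu_{r+1}$ by symmetry, and $(M_T^{-1})_{jj}=\mu_jb_j\ge b_j$ since $\mu_j\ge1$. Hence
\[
a_j-b_j\ge \frac{c_A\mu_j}{\mu_{r+1}}+m b_j .
\]

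Finally I rewrite the quantity of interest as
\[
\frac{a_j}{m+1}-b_j=\frac{(a_j-b_j)-m b_j}{m+1},
\]
and the $mb_j$ terms cancel, leaving $\tfrac{a_j}{m+1}-b_j\ge \tfrac{c_A\mu_j}{\mu_{r+1}(m+1)}$. Multiplying by $m$ and by the local multiplicity gives the first inequality. The second inequality follows from $m/(m+1)\ge 1/2$ and $\mu_j\ge1$.

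The only delicate point is to use both sources in $\mathbf s$ at once, including when $j=r$. In that case the two contributions land in the same coordinate, and adding them is legitimate because $A\ne A_0$. The rest is bookkeeping with \autoref{lem:inverse}.
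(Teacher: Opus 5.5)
Your proof is correct and follows the paper's argument. You bound $\mathbf s\ge c_A\mathbf e_r+m\mathbf e_j$ via \autoref{lem:terminal} and apply positivity of $M_T^{-1}$ with the entries from \eqref{eq:b-formulas} to get $a_j-b_j\ge c_A\mu_j/\mu_{r+1}+mb_j$. You then conclude from $\frac{a_j}{m+1}-b_j=\frac{(a_j-b_j)-mb_j}{m+1}$; the only cosmetic difference is that the paper carries the slack term $m(\mu_j-1)b_j$ one line longer before discarding it.
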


\begin{proof}
By \autoref{lem:terminal}, $A$ meets $T$ only at $C_r$, so
$\mathbf s\ge c_A\mathbf e_r+m\mathbf e_j$. Using
\eqref{eq:linear-chain} and \eqref{eq:b-formulas}, we obtain
\[
  a_j-b_j\ge\frac{c_A\mu_j}{\mu_{r+1}}+m\mu_jb_j.
\]
Since $\mu_j\ge1$ and $m/(m+1)\ge1/2$, we obtain
\[
  m\left(\frac{a_j}{m+1}-b_j\right)
  \ge\frac{m}{m+1}
     \left(\frac{c_A\mu_j}{\mu_{r+1}}+m(\mu_j-1)b_j\right)
  \ge\frac{c_A}{2\mu_{r+1}}.\qedhere
\]
\end{proof}

\begin{lemma}\label{lem:no-mixed}
Under \eqref{eq:small-assumption}, no maximal chain meets both an
invariant and a non-invariant component of $D'$.
\end{lemma}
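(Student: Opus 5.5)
We argue by contradiction. Suppose a maximal chain $T=C_1+\cdots+C_r$ meets an invariant component $A$ of $D'$, with coefficient $c_A$, and a non-invariant component $B$ of $D'$, with coefficient $m$, at a point of $C_j$. The plan is to bound $P^2$ from below by the sum of two contributions in \eqref{eq:core-volume}: one from $A$, one from $B$. We then show that this sum exceeds $1/6$, contradicting \eqref{eq:small-assumption}.

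\emph{Contribution of $B$.} By \autoref{lem:mixed-cost}, the chain term in \eqref{eq:transverse-decomposition} for $B$ is at least $c_A/(2\mu_{r+1})$. Every term there is non-negative, so $m_B(P\cdot B)\ge c_A/(2\mu_{r+1})$.

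\emph{Contribution of $A$.} By \autoref{cor:small-components}, either $P\cdot A=1/6$ or $P\cdot A=0$.
\begin{itemize}
\item If $P\cdot A=1/6$, then $c_A(P\cdot A)=1/6$ already exhausts $P^2\le 1/6$. Adding the strictly positive contribution of $B$ gives $P^2>1/6$, a contradiction.
\item If $P\cdot A=0$, then $A$ is a curve of $Z$ not contained in $\Supp(N)$ that meets $\Supp(N)$. By \autoref{lem:zero-types}, $A$ lies in a type~(2) chain $\mathcal C$. It meets exactly two components of $\Supp(N)$, each a single $(-2)$-curve, and both meet $A$ itself. So $T$ is a single $(-2)$-curve $E$ with $r=1$ and $\mu_{2}=2$, and $B$ meets $E$.
\end{itemize}

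\emph{The case $P\cdot A=0$.} Here the bound from $B$ becomes $m(P\cdot B)\ge c_A/4$. This is weaker than what we need, so I would compute the coefficient of $E$ directly. Let $a$ be the coefficient of $E$ in $D$. \autoref{lem:chain-equations} with $r=1$ gives $2a=1+s$, where $s=D'\cdot E\ge c_A+m$. Hence $a\ge (1+c_A+m)/2$. Integrality of $a$ forces $c_A+m$ odd, or $s$ larger.

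The chain term for $B$ at $E$ is $m\bigl(a/(m+1)-1/2\bigr)$. With $a\ge(1+c_A+m)/2$ this is at least $mc_A/(2(m+1))\ge 1/4$. Thus $P^2\ge 1/4>1/6$, again a contradiction.

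Alternatively, one can use the second $(-2)$-curve $E'$ on $A$. Since $P\cdot A=0$ forces $A^2$ and the structure of $\mathcal C$, one might squeeze $c_A$ and $m$ further, but the direct estimate above already suffices.

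\emph{Main obstacle.} The delicate point is the case analysis via McQuillan's \autoref{lem:zero-types}. I must make sure that an invariant component with $P\cdot A=0$ meeting $N$ must be an end of a type~(2) chain. That means excluding types (3) and (4), which are disjoint from $\Supp(N)$, and type (1), since $A\not\subset\Supp(N)$. It also means identifying $T$ as a single $(-2)$-curve. Once that is done, both cases close with the numerical estimates above. If instead $P\cdot A=1/6$ were attained with $c_A=1$, the generic bound of \autoref{lem:mixed-cost} alone already gives strict excess over $1/6$, so no further refinement is needed.
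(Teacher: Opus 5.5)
Your proof is correct and follows the paper's argument. You split into the cases $P\cdot A>0$ (which by \autoref{cor:small-components} means $P\cdot A=1/6$) and $P\cdot A=0$. In the first case, \autoref{lem:mixed-cost} supplies the extra positive contribution. In the second, \autoref{lem:zero-types} reduces $T$ to a single $(-2)$-curve, and one correction is needed there: the bound $c_A/4$ is not too weak, since $c_A\ge1$ already gives $1/4>1/6$, which is exactly how the paper concludes, and your direct computation of the coefficient of $E$ only re-derives the same bound $mc_A/(2(m+1))$.
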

\begin{proof}
Suppose a chain $T$ meets an invariant component $A$ and a non-invariant
component of $D'$. If $P\cdot A>0$, \autoref{prop:invariant-intersection}
gives an invariant contribution of at least $1/6$, while
\autoref{lem:mixed-cost} gives an additional positive contribution.
This contradicts $P^2\le1/6$.

If $P\cdot A=0$, \autoref{lem:zero-types} implies that $A$ is contained
in a chain of
type~\textup{(2)}, since it lies outside $\Supp(N)$ and meets $T$.
Thus $T$ is a single $(-2)$-curve. \autoref{lem:mixed-cost}, with
$\mu_2=2$, gives a contribution of at least $c_A/4>1/6$.
Again this contradicts \eqref{eq:small-assumption}.
\end{proof}

\subsection{The contribution of a maximal chain}

Continue to assume \eqref{eq:small-assumption}. Let the maximal chain
$T=C_1+\cdots+C_r$ meet non-invariant components of $D'$. By
\autoref{lem:no-mixed}, it meets no invariant component of $D'$.
Index its intersection points with $D'$ by $\alpha$.  Let
$p_\alpha\in C_{j_\alpha}$ and let $m_\alpha$ be the coefficient in $D$ of
the component passing through $p_\alpha$. Distinct components of $D'$
are disjoint at these points, and repeated intersections of the same
component are counted separately. Define
\begin{equation*}
  \Psi(T)=\sum_\alpha
  m_\alpha\left(
    \frac{a_{j_\alpha}}{m_\alpha+1}-b_{j_\alpha}
  \right).
\end{equation*}
Every summand is non-negative.  Put
\begin{equation*}
  s_j=\sum_{j_\alpha=j}m_\alpha.
\end{equation*}
Transversality and the absence of invariant components meeting $T$ imply
$s_j=D'\cdot C_j$. Thus \eqref{eq:linear-chain} applies with these $s_j$.

\begin{proposition}\label{prop:pure-chain}
Suppose every non-invariant component of $D'$ is smooth, transverse
to $\F$, and disjoint from every other component of $D'$. Let $T$
be a maximal chain meeting $D'$ but disjoint from its invariant
components. Then
\[
  \Psi(T)=0
  \qquad\text{or}\qquad
  \Psi(T)\ge\frac16.
\]
The equality $\Psi(T)=0$ holds precisely when $T\cap\Supp(D')$
is one point on $C_1$. The equality $\Psi(T)=1/6$ holds precisely
when $T=[-2,-2]$ and $T\cap\Supp(D')$ is one point on $C_2$,
lying on a component of coefficient one in $D$.
\end{proposition}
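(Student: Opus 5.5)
The plan is to write each summand of $\Psi(T)$ through the chain equations and then combine positivity of $M_T^{-1}$ with the integrality of $a_r$. For a point $p_\alpha\in C_j$ with coefficient $m=m_\alpha$ we have
\[
 m\Bigl(\frac{a_j}{m+1}-b_j\Bigr)=\frac{m}{m+1}\bigl((a_j-b_j)-mb_j\bigr).
\]
By \eqref{eq:linear-chain}, $a_j-b_j=\sum_i (M_T^{-1})_{ji}s_i$, and \autoref{lem:inverse} together with \eqref{eq:b-formulas} gives $(M_T^{-1})_{jj}=\mu_jb_j$. Therefore
\[
 m\Bigl(\frac{a_j}{m+1}-b_j\Bigr)\ \ge\ \frac{m}{m+1}\Bigl(m(\mu_j-1)b_j+\sum_{\beta\ne\alpha}m_\beta\,(M_T^{-1})_{j\,j_\beta}\Bigr),
\]
and this is an equality when the sum runs over all other points. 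Every term on the right is non-negative. Hence $\Psi(T)=0$ forces a single point ($T$ meets $D'$) with $\mu_j=1$, i.e.\ $j=1$. Conversely, a single point on $C_1$ gives $\mathbf s=m\mathbf e_1$ and $\Psi(T)=0$. This settles the characterization of $\Psi(T)=0$.

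Next I treat the case of exactly one point, lying on $C_j$ with $j\ge2$. Then $\Psi(T)=\frac{m^2}{m+1}(\mu_j-1)\xi_j/\mu_{r+1}$. The integrality statement \eqref{eq:det-bound} gives $\mu_{r+1}\le 1+m\mu_j$. Using $\xi_j\ge1$ and $\mu_j\ge2$, we get
\[
 \Psi(T)\ge \frac{m^2(\mu_j-1)}{(m+1)(1+m\mu_j)} .
\]
For $m=1$ this equals $(\mu_j-1)/(2(\mu_j+1))\ge1/6$, with equality only if $\mu_j=2$, $\xi_j=1$ and $\mu_{r+1}=1+\mu_j=3$. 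Here $\xi_j=1$ forces $j=r$, and $\mu_2=2$ with $j=r\ge2$ forces $r=2$ and $e_1=2$. Then $\mu_3=3$ gives $e_2=2$, i.e.\ $T=[-2,-2]$, and this case indeed achieves $1/6$. For $m\ge2$ the bound is strictly larger than $1/6$; for instance, $m=2$ already gives at least $4/15$. This produces the stated equality case.

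The remaining case, with at least two intersection points, is the main obstacle. Each point then picks up the cross terms $m_\alpha m_\beta (M_T^{-1})_{j_\alpha j_\beta}=m_\alpha m_\beta\,\mu_{\min}\xi_{\max}/\mu_{r+1}$, where the indices $\min$ and $\max$ refer to $j_\alpha,j_\beta$. Summing over ordered pairs gives
\[
 \Psi(T)\ \ge\ \sum_{\alpha\ne\beta}\frac{m_\alpha m_\beta}{m_\alpha+1}\cdot\frac{\mu_{\min(j_\alpha,j_\beta)}\,\xi_{\max(j_\alpha,j_\beta)}}{\mu_{r+1}},
\]
plus the diagonal terms $\frac{m^2}{m+1}(\mu_j-1)b_j$. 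I will bound the denominator using the divisibility $\mu_{r+1}\mid 1+\sum_i s_i\mu_i$ together with $\mu_{r+1}=\mu_{j+1}\xi_j-\mu_j\xi_{j+1}\le\mu_{j+1}\xi_j$, aiming to show the total is strictly above $1/6$.

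I would first try the worst configuration, where all points lie on $C_1$. There $\Psi(T)\ge(\sum_{\alpha\ne\beta}m_\alpha m_\beta/(m_\alpha+1))\,\xi_1/\mu_{r+1}$, and $\mu_{r+1}\le e_1\xi_1$ while $\mu_{r+1}\mid 1+\sum m_\alpha$. These should give, for example, at least $1/3$ when $m_1=m_2=1$, and larger values otherwise. Points sitting further along the chain only increase the $\mu$-weights, and I expect to compare them with the $C_1$ case by monotonicity of $\mu_i$ and $\xi_i$ from \eqref{eq:mu-increasing}. Getting this comparison clean, so that it gives a strict inequality with no exceptional configurations, is where I expect most of the work to lie.
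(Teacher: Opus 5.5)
Your argument for the characterization of $\Psi(T)=0$ is correct. It rests on the exact per-point identity
\[
 m_\alpha\Bigl(\frac{a_j}{m_\alpha+1}-b_j\Bigr)=\frac{m_\alpha}{m_\alpha+1}\Bigl(m_\alpha(\mu_j-1)b_j+\sum_{\beta\ne\alpha}m_\beta(M_T^{-1})_{j\,j_\beta}\Bigr)
\]
together with strict positivity of $M_T^{-1}$. Your single-point analysis is also correct: it isolates $T=[-2,-2]$ with one point of coefficient one on $C_2$.

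\textbf{The gap} is the case of two or more intersection points, which you leave at ``should give'' and ``I expect''. This case cannot be skipped. It is needed for the dichotomy, and also for the ``precisely when'' clause, because you must show that every multi-point configuration has $\Psi(T)>1/6$.

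Your proposed reduction also fails term by term. You suggest that moving points further along the chain only helps, so that all points on $C_1$ is the worst case. But the only control on the denominator, $\mu_{r+1}\le1+\sum_k s_k\mu_k$, gets weaker as points move along the chain. Take $T=[-3,-2,\ldots,-2]$ with $r\ge2$ and one point of coefficient one on each of $C_1$ and $C_r$. Then $\mu_r=2r-1$, $\mu_{r+1}=2r+1$ and $a_r=1$. The cross terms total only $1/(2r+1)\to0$, whereas with both points on $C_1$ they alone give at least $1/3$. The bound survives here only because the diagonal term $\frac12(\mu_r-1)b_r=\frac{r-1}{2r+1}$ of the point on $C_r$ compensates.

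\textbf{The missing idea} is to look only at the farthest contacted component.
\begin{enumerate}
\item Put $J=\max\{j:s_j>0\}$ and $x=\sum_{k\le J}s_k\mu_k$.
\item Since $(M_T^{-1})_{Jk}=\mu_kb_J$ for $k\le J$, your per-point formula for a point $\alpha$ on $C_J$ collapses to $\frac{m_\alpha}{m_\alpha+1}b_J(x-m_\alpha)$.
\item Discard the non-negative contributions of points off $C_J$. Using $m_\alpha\le s_J$ gives $\sum_{j_\alpha=J}\frac{m_\alpha(x-m_\alpha)}{m_\alpha+1}\ge\frac{s_J(x-s_J)}{s_J+1}$.
\item Integrality of $a_r=(1+x)/\mu_{r+1}$ gives $b_J=\xi_J/\mu_{r+1}\ge1/(1+x)$.
\end{enumerate}
Hence
\[
 \Psi(T)\ge\frac{s_J(x-s_J)}{(s_J+1)(1+x)}.
\]
If $J\ge2$, then $x\ge s_J\mu_J\ge2s_J$. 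Since $(x-s_J)/(x+1)$ increases in $x$, this gives $\Psi(T)\ge s_J^2/((s_J+1)(2s_J+1))\ge1/6$. Equality holds only if $s_J=1$ and $x=2$, that is, a single point of coefficient one on $C_2$, and your single-point analysis then finishes.

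If $J=1$, all points lie on $C_1$. Then $a_1=(s+1)b_1$ with $s=s_1$ and $b_1\ge1/(s+1)$, so
\[
 \Psi(T)=b_1\sum_\alpha\frac{m_\alpha(s-m_\alpha)}{m_\alpha+1}\ge\frac{(q-1)s}{2(s+1)}\ge\frac13
\]
when there are $q\ge2$ points. This is exactly how the paper completes the proof.
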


\begin{proof}
For every $j$ with $s_j>0$, the inequality
$m_\alpha\le s_j$ for $j_\alpha=j$ gives
\[
 \sum_{j_\alpha=j}\frac{m_\alpha}{m_\alpha+1}
 \ge\frac{\sum_{j_\alpha=j}m_\alpha}{s_j+1}
 =\frac{s_j}{s_j+1}.
\]
Multiplying by $a_j$ and subtracting $s_jb_j$, we obtain
\begin{equation}\label{eq:grouping}
 \sum_{j_\alpha=j}
 m_\alpha\left(\frac{a_j}{m_\alpha+1}-b_j\right)
 \ge\frac{s_j}{s_j+1}\bigl(a_j-(s_j+1)b_j\bigr).
\end{equation}

Let
\[
  J=\max\{j:s_j>0\}.
\]
Thus $s_k=0$ for $k>J$.  Taking the $J$-th row of
\eqref{eq:linear-chain} and using \eqref{eq:inverse-matrix}, we obtain
\begin{equation}\label{eq:last-contact}
  a_J-b_J
  =\frac{\xi_J}{\mu_{r+1}}\sum_{k=1}^Js_k\mu_k
  =b_J\sum_{k=1}^Js_k\mu_k.
\end{equation}
Taking the last row of
$\mathbf a=M_T^{-1}(\mathbf e_1+\mathbf s)$ gives
\begin{equation}\label{eq:ar-final}
  a_r=\frac{1+\sum_{k=1}^Js_k\mu_k}{\mu_{r+1}}
  \in\ZZ_{>0}.
\end{equation}
Hence
\begin{equation}\label{eq:bJ-final}
  b_J=\frac{\xi_J}{\mu_{r+1}}
  \ge\frac1{1+\sum_{k=1}^Js_k\mu_k}.
\end{equation}

Assume first that $J>1$.  Keeping only the non-negative terms supported on
$C_J$, and using \eqref{eq:grouping}, we find
\begin{align*}
  \Psi(T)
  &\ge\frac{s_J}{s_J+1}
    \bigl(a_J-(s_J+1)b_J\bigr)\\
  &=\frac{s_Jb_J}{s_J+1}
    \left(\sum_{k=1}^Js_k\mu_k-s_J\right)\\
  &\ge
    \frac{s_J\left(\sum_{k=1}^Js_k\mu_k-s_J\right)}
    {(s_J+1)\left(1+\sum_{k=1}^Js_k\mu_k\right)}.
\end{align*}
Since $J>1$, equation \eqref{eq:mu-increasing} gives $\mu_J\ge2$, and
therefore
\[
  \sum_{k=1}^Js_k\mu_k\ge2s_J.
\]
Since $(x-s_J)/(x+1)$ is increasing for $x\ge2s_J$, it follows that
\[
  \Psi(T)
  \ge\frac{s_J^2}{(s_J+1)(2s_J+1)}
  \ge\frac16,
\]
because
\[
  \frac{s_J^2}{(s_J+1)(2s_J+1)}-\frac16
  =\frac{(s_J-1)(4s_J+1)}
  {6(s_J+1)(2s_J+1)}\ge0.
\]

Equality $\Psi(T)=1/6$ forces $s_J=1$ and
$\sum_{k=1}^J s_k\mu_k=2$, since the preceding inequalities are strict
otherwise. Hence $\mu_J=2$, so $J=2$, and $s_k=0$ for $k<J$.
By \eqref{eq:ar-final}, $\mu_{r+1}$ divides three. Since
$\mu_{r+1}>\mu_J=2$, it equals three. Thus $r=2$, $e_1=e_2=2$,
and the unique intersection point lies on a component of coefficient
one in $D$. Conversely, the inverse
matrix of $[-2,-2]$ gives $\Psi(T)=1/6$ for this configuration.

Assume next that $J=1$.  Put
$s=s_1=\sum_\alpha m_\alpha$.  Equations
\eqref{eq:last-contact} and \eqref{eq:bJ-final} give
\[
  a_1=(s+1)b_1,
  \qquad
  b_1\ge\frac1{s+1}.
\]
Consequently,
\begin{equation*}
  \Psi(T)=b_1\sum_\alpha
  \frac{m_\alpha(s-m_\alpha)}{m_\alpha+1}.
\end{equation*}
If there is one intersection point, then $s=m_\alpha$ and $\Psi(T)=0$.
If there are $q\ge2$ points, then $s\ge q\ge2$ and
$m_\alpha/(m_\alpha+1)\ge1/2$.  Thus
\[
  \Psi(T)
  \ge\frac1{2(s+1)}\sum_\alpha(s-m_\alpha)
  =\frac{(q-1)s}{2(s+1)}
  \ge\frac13.
\]
\end{proof}

\begin{proof}[Proof of \autoref{thm:lower-bound}]
Suppose $0<P^2<1/6$. By \autoref{cor:small-components}, all
invariant components $A$ of $D'$ satisfy $P\cdot A=0$, while the
non-invariant components have zero tangency and are disjoint from the
other components of $D'$. By \autoref{lem:no-mixed}, every maximal
chain meeting a non-invariant component of $D'$ is disjoint from its
invariant components.
Equations~\eqref{eq:core-volume} and \eqref{eq:transverse-decomposition}
therefore give
\[
 P^2=\sum_T\Psi(T),
\]
where $T$ runs over the maximal $\F$-chains meeting a non-invariant
component of $D'$. By \autoref{prop:pure-chain}, every summand
is zero or at least
$1/6$. Their sum is smaller than $1/6$, so all vanish, contradicting
$P^2>0$. Hence $\vol(\F)\ge1/6$. Sharpness follows from
\autoref{thm:plane-family} and \autoref{ex:invariant-canonical}.
\end{proof}

\section{The equality case}\label{sec:equality}

Assume throughout this section that $p_g(\F)>0$ and $P^2=1/6$ on a relatively minimal model. By \eqref{eq:LT-intro}, $p_g(\F)=1$, so $D\in|K_{\F}|$ is unique.

We first make two observations. Every maximal $\F$-chain meets $D'$.
Otherwise \eqref{eq:linear-chain} would give $a_r=1/\mu_{r+1}<1$,
contrary to $a_r\in\ZZ_{>0}$. Also, an effective nef and big divisor has
positive intersection with each connected component of its support.
Indeed, write $P=P_1+P_2$, where $\Supp(P_1)$ is one connected component
of $\Supp(P)$ and $\Supp(P_1)\cap\Supp(P_2)=\varnothing$.
If $P\cdot P_1=0$, then $P_1^2=0$. The Hodge index theorem implies that
$P_1$ is numerically trivial, which contradicts its effectivity.

\begin{theorem}\label{thm:equality-core}
Let $(S,\F)$ be a relatively minimal foliated surface with $p_g(\F)>0$ and $\vol(\F)=1/6$. Let $D\in|K_{\F}|$, and write $D=D_N+D'$ as in \eqref{eq:core}. Then exactly one of the following holds.
\begin{enumerate}
\item Every component of $D$ is $\F$-invariant. There is a unique component $A$ of $D'$ such that $P\cdot A>0$.
Its coefficient in $D$ is one, and
\[
 P\cdot A=\frac16,\qquad
 \mathcorrection{t=2},\quad p_a(A)=0,\quad \nu_A=1,\quad
(\mu_{1,r_1+1},\mu_{2,r_2+1})=(2,3).
\]
Every other component $C$ of $D'$ satisfies $P\cdot C=0$.
Every component of $D$ has rational normalization, and $q(S)=0$.
\item The divisor $D'$ is a single smooth curve $A$, with coefficient one, transverse to $\F$. There are a chain $T=C_1+C_2=[-2,-2]$ and $k\ge1$ mutually disjoint $(-2)$-curves $E_1,\ldots,E_k$, where $k$ is odd, such that
\begin{align}\label{eq:transverse-equality}
 D&=A+C_1+C_2+\sum_{i=1}^k E_i,\nonumber\\
 N&=\frac23C_1+\frac13C_2+\frac12\sum_{i=1}^kE_i,\\
 A^2&=-\frac{k+1}{2},\qquad
 A\cdot C_1=0,\quad A\cdot C_2=A\cdot E_i=1.\nonumber
\end{align}
The intersections involving $A$ are transverse and lie away from $\Sing(\F)$, and $P\cdot A=1/6$.
\end{enumerate}
Both cases occur on rational surfaces.
\end{theorem}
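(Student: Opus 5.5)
The plan is to run the proof of \autoref{thm:lower-bound} again with $P^2=1/6$ and to track which inequalities are forced to be equalities. By \eqref{eq:LT-intro} we have $p_g(\F)=1$, so $D$ is unique. \autoref{cor:small-components} applies, since \eqref{eq:small-assumption} allows equality. So every invariant component of $D'$ has $P\cdot A\in\{0,1/6\}$, with coefficient one when the value is $1/6$. Every non-invariant component is smooth, transverse to $\F$, avoids $\Sing(\F)$, and is disjoint from the other components of $D'$. By \autoref{lem:no-mixed}, \eqref{eq:core-volume} and \eqref{eq:transverse-decomposition}, we get
\[
 \frac16=P^2=\sum_{A\ \text{invariant}}m_A(P\cdot A)+\sum_T\Psi(T).
\]
Each summand is $0$ or at least $1/6$, by \autoref{prop:invariant-intersection} and \autoref{prop:pure-chain}. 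Hence exactly one summand equals $1/6$ and all others vanish. This gives the dichotomy: the nonzero summand comes either from an invariant curve, giving case (i), or from a chain, giving case (ii).

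I will use one tool throughout. A non-invariant component $B$ of $D'$ cannot have $P\cdot B=0$, since by \autoref{lem:zero-types} every curve with $P\cdot B=0$ is invariant. For non-invariant $B$, the quantity $m_B(P\cdot B)$ is the sum of the chain terms at its contact points, because tangency and the other intersection terms are zero.

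\emph{Case (i).} The unique positive summand is $P\cdot A=1/6$ for an invariant $A$ with coefficient one. Its description ($t=2$, $p_a=0$, $\nu_A=1$, determinants $(2,3)$) is read off from \autoref{prop:invariant-intersection}. All $\Psi(T)$ vanish. So any non-invariant $B$ would satisfy $P\cdot B=0$, which is impossible. Hence every component of $D$ is invariant. The components of $D'$ other than $A$ have $P\cdot C=0$, so by \autoref{lem:zero-types} they are rational curves (possibly nodal or in cycles) and have rational normalization. The chain curves are rational as well.

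For $q(S)=0$, suppose $q>0$ and let $\alpha\colon S\to\Alb(S)$ be the Albanese map. Its image is a curve or a surface. Every rational curve is contracted by $\alpha$. Pull back an ample class to get a nef class $H\not\equiv0$ with $H\cdot C=0$ for every component $C$ of $D$. Then $D\cdot H=0$. But $D=K_{\F}$ is big and $H$ is nef and nonzero, so $D\cdot H>0$, a contradiction.

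\emph{Case (ii).} Here all invariant components of $D'$ have $P=0$. Exactly one chain has $\Psi(T)=1/6$, and by \autoref{prop:pure-chain} it is $T=[-2,-2]$, meeting a coefficient-one non-invariant $A$ at one point of $C_2$. Every other chain $T'$ meeting $D'$ has $\Psi(T')=0$, hence meets $D'$ in a single point on $C'_1$. Each remaining non-invariant component $B$ has $P\cdot B=0$ unless it meets $T$, so $A$ is the only non-invariant component.

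Next, apply the equality statement of \autoref{lem:boundary} to each other chain $T'$ meeting $A$. It gives $\mathbf s=m\mathbf e_1=\mathbf e_1$. Then \eqref{eq:integrality} gives $a_r=2/\mu_{r+1}\in\ZZ_{>0}$. So $\mu_{r+1}=2$, and $T'$ is a single $(-2)$-curve $E_i$ with $a=1$ and $b=1/2$. Recall that every maximal chain meets $D'$.

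The step I expect to be hardest is excluding invariant components of $D'$ in this case, together with the chains meeting them. Such a component $C$ has $P\cdot C=0$ and does not meet $A$, by \eqref{eq:disjoint-core}. I would use the positivity fact proved at the start of this section: $P=D-N$ is effective, nef and big, so $P\cdot P_1>0$ for each connected component $P_1$ of $\Supp(P)$. Consider a connected component built only from such $C$ and the chains meeting them; these chains do not meet $A$ by \autoref{lem:no-mixed}. Its $P$-degree is $0$, a contradiction. Here one must check carefully that the coefficients of the chain curves in $P=D-N$ do not link such a piece to $A$; this is where \autoref{lem:zero-types} (types (2)--(4)) and \autoref{lem:no-mixed} do the work.

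Once this is done, $D=A+C_1+C_2+\sum E_i$ and $N$ are as in \eqref{eq:transverse-equality}; the $[-2,-2]$ coefficients come from \autoref{cor:negative-coefficients}. Finally, $\tang(\F,A)=0$ gives $K_{\F}\cdot A=-A^2$, while expanding $D\cdot A$ gives $A^2+1+k$. Hence $A^2=-(k+1)/2$, so $k$ is odd, and $P\cdot A=1/6$. That both cases occur on rational surfaces is deferred to \autoref{ex:invariant-canonical} and \autoref{thm:plane-family}.
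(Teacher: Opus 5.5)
Your proposal is correct and follows essentially the same route as the paper: the same one-positive-summand dichotomy, the same Albanese argument for $q(S)=0$, the integrality $a_r=2/\mu_{r+1}$ for the other chains, and the same connected-component positivity argument for excluding further components of $D'$ in case (ii). The differences are minor: you rule out extra non-invariant components through McQuillan's invariance statement, whereas the paper uses the connected-component argument for all remaining components at once; and the ``careful check'' you flag follows directly from \eqref{eq:disjoint-core} and \autoref{lem:no-mixed}, so types (2)--(4) of \autoref{lem:zero-types} are not needed there.
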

\begin{proof}
By \autoref{cor:small-components} and \autoref{lem:no-mixed},
$P^2$ is the sum of the terms $m_A(P\cdot A)$ for invariant components
$A$ of $D'$ and the terms $\Psi(T)$ for maximal chains meeting
non-invariant components of $D'$. Each positive term is at least
$1/6$, by \autoref{prop:invariant-intersection} and
\autoref{prop:pure-chain}. Thus exactly one term is positive.

Suppose $m_A(P\cdot A)>0$ for an invariant component $A$ of $D'$.
\autoref{prop:invariant-intersection} gives the stated data for $A$.
Every other component $C$ of $D'$ satisfies $P\cdot C=0$ and is
therefore invariant by \autoref{lem:zero-types}. The same classification shows
that these curves have rational normalization. Thus every component of
$D$ is invariant and has rational normalization.
Were the Albanese map nonconstant, the pullback $H$ of an ample divisor on its image would be a non-zero nef class with $D\cdot H=0$, since every rational curve is contracted by the Albanese map. This is impossible for a big divisor $D$: write $D\equiv H_0+E$ with $H_0$ ample and $E$ effective to obtain $D\cdot H>0$. Consequently $q(S)=0$.

Suppose $\Psi(T)>0$ for a maximal chain $T$ meeting a non-invariant
component of $D'$. By \autoref{prop:pure-chain},
$T=C_1+C_2=[-2,-2]$ and it meets $D'$ at one point on $C_2$.
The component $A$ of $D'$ through this point has coefficient one in $D$.
Every other maximal chain $T'$ meeting a non-invariant component of
$D'$ has $\Psi(T')=0$ and meets $D'$ at one point on its first component.
By \autoref{lem:no-mixed} and \eqref{eq:disjoint-core}, all other
components of $D'$, together with the maximal chains meeting them, are
disjoint from the connected component of $\Supp(P)$ containing $A$.
Each such connected component has intersection zero with $P$,
contrary to the observation preceding the theorem. Thus $D'=A$,
with coefficient one.

For any other maximal chain $T'$ meeting $A$, one has $\Psi(T')=0$.
The coefficient of its last component in $D$ is
$2/\mu_{r+1}\in\ZZ_{>0}$. Since $\mu_{r+1}\ge2$, it follows that
$\mu_{r+1}=2$, so $T'$ is a single $(-2)$-curve with coefficient one in
$D$. The components $C_1$ and $C_2$ also have coefficient one in $D$.
This proves the first two lines of \eqref{eq:transverse-equality}.
Finally, $\tang(\F,A)=0$ gives
\[
 0=K_{\F}\cdot A+A^2=2A^2+k+1.
\]
Thus $k$ is odd, in particular positive, and the claimed self-intersection
follows. \autoref{thm:plane-family} with $n=2$ realizes the transverse
case, and \autoref{ex:invariant-canonical} realizes the invariant one.
\autoref{thm:equality-intro} follows as well.
\end{proof}

\begin{remark}
\autoref{thm:equality-core} describes the effective canonical divisor
when $\vol(\F)=1/6$. It does not classify these foliations up to birational
equivalence. If $p_g(\F)>0$ and $q(S)>0$, this divisor must have the
configuration \eqref{eq:transverse-equality}.
\end{remark}

\section{Foliations on irregular surfaces}\label{sec:albanese-geography}

\subsection{The Albanese fibration}\label{sec:alb}

We first show that a foliation with $p_g(\F)=0$ on an irregular surface
is induced by the Albanese fibration. \autoref{cor:alb-intro} then
follows from \autoref{thm:lower-bound}.

\begin{proposition}\label{prop:alb}
Let $(S,\F)$ be a foliated surface with $q(S)>0$. If $p_g(\F)=0$, then the Albanese image of $S$ is a curve and $\F$ is induced by its Albanese fibration $f:S\to B$. In particular, $g(B)=q(S)$.
\end{proposition}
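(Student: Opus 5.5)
Since $q(S)>0$, there is a nonzero holomorphic $1$-form $\omega\in H^0(S,\Omega^1_S)$, and the plan is to restrict such forms to the leaves. Restriction along $T_{\F}\subset T_S$ gives a map
\[
 H^0(S,\Omega^1_S)\longrightarrow H^0(S,K_{\F}),\qquad \omega\mapsto \omega|_{T_{\F}}.
\]
When $p_g(\F)=0$ this map is zero, so every holomorphic $1$-form vanishes on $T_{\F}$. Both $p_g(\F)$ and $q(S)$ are birational invariants, as is the conclusion. We may therefore first pass to a reduced model if convenient, though the restriction argument works on any model, since $K_{\F}$ on $S$ and on a reduced model have the same $h^0$ by \cite[Section~2.3]{LT24}.

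First I show that the Albanese image is a curve. If it were a surface, there would be two holomorphic $1$-forms $\omega_1,\omega_2$ with $\omega_1\wedge\omega_2\not\equiv0$. At a general point $x$ they span $\Omega^1_{S,x}$. Both kill the nonzero vector $v(x)$ spanning $T_{\F,x}$ at a general (regular) point, which is impossible. Hence all holomorphic $1$-forms are pointwise proportional, and $\omega_1\wedge\omega_2=0$ for all pairs. By the Castelnuovo--de Franchis theorem (applicable because $q\ge1$, and if $q=1$ the Albanese image is trivially the elliptic curve $\Alb(S)$), the Albanese map $a:S\to\Alb(S)$ has one-dimensional image. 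Its Stein factorization gives the Albanese fibration $f:S\to B$. Standard facts give that $a$ factors through $B$ and $g(B)=q(S)$, because all $1$-forms are pulled back from $B$ via $f$, and $f^*:H^0(B,\omega_B)\to H^0(S,\Omega^1_S)$ is an isomorphism.

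Next I identify $\F$ with the foliation induced by $f$. Take a nonzero $\omega=f^*\eta$. On the open set where $df\neq0$ and $\eta$ does not vanish, the kernel of $\omega$ equals the kernel of $df$, which is the tangent line to the fibers of $f$. Since $\omega(v)=0$, $T_{\F}$ agrees with $T_{S/B}$ on a dense open set. Both are saturated subsheaves of $T_S$, so they coincide, and $\F$ is induced by $f$. In particular $\F$ is algebraically integrable.

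The main obstacle is the vanishing step: making sure that $\omega|_{T_{\F}}$ really defines a section of $K_{\F}$ on the given (possibly nonreduced) model, and that this agrees with the birational invariant $p_g(\F)$. I would handle this by noting that $T_{\F}\hookrightarrow T_S$ is a sheaf map, so contraction with $\omega$ is a global section of $T_{\F}^*=K_{\F}$ on any model. For a reduced model $X\to S$, the pulled-back forms give sections of $K_{\G}$, and $p_g(\F)=h^0(K_{\G})=0$ forces the vanishing there. Vanishing on a dense open set then descends to $S$. The Castelnuovo--de Franchis input is quoted as standard.
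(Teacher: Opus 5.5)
Your overall argument is correct and follows the paper's idea: global one-forms annihilate $T_{\F}$, so $T_{\F}\subset\ker da$ generically. This forces the Albanese image to be a curve and identifies $\F$ with the foliation of the Albanese fibration.

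\textbf{A false claim to drop.} You assert that $h^0(S,K_{\F})=p_g(\F)$ on an arbitrary model. This is not true: resolving non-reduced singularities can strictly shrink the canonical bundle. For instance, a dicritical blow-up at a point where the generator vanishes to order $\nu$ gives $K_{\G}=\pi^*K_{\F}-\nu E$. In \autoref{thm:plane-family} one has $h^0(\PP^2,K_{\F_n})=h^0(\OO_{\PP^2}(1))=3$, while $p_g(\F_n)=1$. So you cannot read off directly from $p_g(\F)=0$ that $H^0(S,\Omega^1_S)\to H^0(S,K_{\F})$ is zero.

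\textbf{The repair.} What actually works is the argument in your last paragraph. For a reduced model $\pi:X\to S$, the contraction of $\pi^*\omega$ with $T_{\G}$ lies in $H^0(X,K_{\G})=0$. Since $\pi$ is an isomorphism off its exceptional locus, $\omega|_{T_{\F}}$ is a section of the line bundle $K_{\F}$ vanishing on a dense open set, hence zero. Keep that argument and discard the first-paragraph justification.

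\textbf{Comparison with the paper.} With this repair, your route differs from the paper's only in bookkeeping.
\begin{enumerate}
\item The paper works entirely on the reduced model $X$. It must then descend $f_X$ to $S$, using that the exceptional curves of $X\to S$ are rational and hence contracted because $g(B)=q(S)>0$. You transfer the vanishing to $S$ first and work there, so no descent step is needed.
\item Castelnuovo--de Franchis is more than you need. The pullbacks of the invariant forms on $\Alb(S)$ span the image of $da^*$ at every point. So once every global one-form kills the nonzero vector $v(x)$ at a general point, you get $da(v)=0$. Hence $da$ has rank at most one, and the image is a curve.
\end{enumerate}
The rank argument in (ii) is exactly the paper's, and it covers $q(S)=1$ without a separate case.
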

\begin{proof}
Pass first to a reduced model $X$ of $\F$. Both irregularity and the Albanese variety are unchanged. Dualizing $T_{\F}\subset T_X$ gives a morphism
\[
 \Omega_X^1\longrightarrow K_{\F}.
\]
Since $\dim H^0(X,K_{\F})=p_g(\F)=0$, every global holomorphic one-form annihilates $T_{\F}$. Write $a_X:X\to\Alb(X)$ for the Albanese map. Holomorphic one-forms on $X$ are pullbacks of invariant one-forms on $\Alb(X)$, and the latter span its cotangent spaces. Thus $da_X(T_{\F})=0$.
The Albanese map is nonconstant because $q(X)>0$, and the identity $da_X(T_{\F})=0$ shows that its generic rank is at most one. Thus its image is a curve, denoted by $B$.
Namely, the Albanese map gives a fibration
\[
 X\xrightarrow{f_X}B= a_X(X)\subseteq\Alb(X),
\]
where $B$ is a smooth projective curve and $f_X$ has connected fibers. At the generic point, $T_{\F}$ is the kernel of $df_X$.
Therefore, $\F$ is nothing but the foliation induced by $f_X$.

Every exceptional curve of $X\to S$ is contracted by $f_X$, since $g(B)=q(X)=q(S)>0$ by assumption. Thus $f_X$ descends to a morphism $f:S\to B$ with connected fibers, which is the Albanese fibration of $S$. Its induced saturated foliation agrees with the original foliation on a dense open subset, and hence equals it on $S$.
\end{proof}

\begin{proof}[Proof of \autoref{cor:alb-intro}]
The volume assumption and \autoref{thm:lower-bound} give $p_g(\F)=0$.
Hence the conclusion follows from \autoref{prop:alb}.
\end{proof}



\begin{corollary}\label{cor:uniqueness}\label{cor:large-region}
Let $S$ be a smooth projective irregular surface.
\begin{enumerate}
\item There is at most one saturated foliation of general type on $S$
with volume in $(0,1/6)$.
\item If $S$ has maximal Albanese dimension or is not of general type,
every foliation of general type on $S$ has volume at least $1/6$.
The same bound holds for every transcendental foliation of general type
on $S$.
\end{enumerate}
\end{corollary}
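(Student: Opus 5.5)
Both parts will come from \autoref{prop:alb} combined with \autoref{thm:lower-bound}.

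For (i), let $\F$ be a saturated foliation of general type on $S$ with $0<\vol(\F)<1/6$. By \autoref{thm:lower-bound}, $p_g(\F)=0$. By \autoref{prop:alb}, the Albanese image of $S$ is a curve and $\F$ agrees with the foliation induced by the Albanese fibration $f:S\to B$ on a dense open set. A saturated invertible subsheaf of $T_S$ is determined by its generic fibre, so $\F$ is the saturation of $\ker df$. Any two such foliations therefore coincide, which gives uniqueness. The only care needed is that \autoref{prop:alb} is stated for a possibly non-reduced $\F$ on $S$; its final sentence already gives equality on $S$.

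For (ii), suppose some foliation of general type on $S$ has volume $<1/6$. The argument above shows that the Albanese image is a curve, so $S$ is not of maximal Albanese dimension. It also shows that $\F$ is induced by a fibration, and so is algebraically integrable, which settles the transcendental statement. The remaining case is $S$ not of general type. Here $\F$ is induced by the Albanese fibration $f:S\to B$ with $g(B)=q(S)\ge1$. Since $\F$ is of general type, the general fibre has genus $g\ge2$ (for $g\le1$ the foliation has Kodaira dimension at most one). A fibration with $g\ge2$ over a base of genus $\ge1$ forces $\Kod(S)\ge0$. I expect the main obstacle to be showing that in fact $\Kod(S)=2$.

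My plan for that step is to use the relation between $K_{\F}$ and $K_{S/B}$ on a reduced model. For a fibration one has $K_{\F}=K_{S/B}-\sum(\text{multiplicity corrections})$, so $K_{\F}\le K_{S/B}$ up to effective vertical divisors. This gives $\vol(\F)\le\vol(K_{S/B})$, and $\vol(\F)>0$ then makes $K_{S/B}$ big. Since $g(B)\ge1$, $K_S=K_{S/B}+f^*K_B$ with $f^*K_B$ effective, so $K_S$ is big. This contradicts $S$ not being of general type. If the comparison $K_{\F}\le K_{S/B}$ turns out to be delicate, I would instead argue via the classification: if $\Kod(S)\le1$, then $S$ has a fibration with elliptic or rational fibres. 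Such a fibration either coincides with $f$, which is impossible since $g\ge2$, or is transverse to $f$. In the transverse case the fibres of $f$ dominate a curve of genus $\le1$, and I would then show that $\F$ has $\Kod(\F)\le1$ via Brunella's classification.
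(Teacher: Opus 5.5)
Your proposal is correct and matches the paper's proof. In (i), \autoref{thm:lower-bound} forces $p_g(\F)=0$ and \autoref{prop:alb} identifies $\F$ with the saturated kernel of the Albanese fibration. In (ii), on a reduced model you use $K_{X/B}=K_{\F_X}+\sum(m_i-1)C_i$ together with effectivity of $f^*K_B$ for $g(B)\ge1$, which is exactly the paper's identity $K_X=K_{\F_X}+f_X^*K_B+\sum_i(m_i-1)C_i$. That comparison holds for any fibration, so your classification-based fallback is unnecessary.
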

\begin{proof}
For (i), \autoref{cor:alb-intro} identifies every such foliation with
the saturation of the kernel of the differential of the same Albanese
fibration. A saturated rank-one subsheaf of $T_S$ is determined by its restriction
to a dense open subset of $S$.
For (ii), suppose the volume is less than $1/6$. By \autoref{cor:alb-intro}, the foliation is induced by the Albanese fibration, so it is algebraically integrable and the Albanese image is a curve. It remains to see that $S$ is of general type. On a reduced model $X$, write $f_X:X\to B$ for the induced fibration. Its canonical divisor satisfies
\[
K_X=K_{\F_X}+f_X^*K_B+\sum_i(m_i-1)C_i,
\]
where $m_i$ is the multiplicity of the fiber component $C_i$. Since $g(B)\ge1$, the last two terms are linearly equivalent to effective divisors. Thus $K_X$ is big because $K_{\F_X}$ is big, and $S$ is of general type as well. This proves all the assertions in (ii).
\end{proof}

\subsection{Denominators and the genus of a fiber}\label{sec:denominators}

We next bound the denominators of the coefficients of $N$ for foliations
induced by fibrations, without assuming $p_g(\F)>0$. Let $\F$ be induced
by a fibration of genus $g\ge2$, and work on a relatively minimal model
$(X,\F)$. The first integral is a morphism $f:X\to B$: a reduced
algebraically integrable foliation has no dicritical singularities, and
every $\F$-exceptional curve is contained in a fiber. Write $K_{\F}=P+N$
as before. Every component of $N$ is contained in a fiber, so for a
general fiber $F$,
\begin{equation}\label{eq:fibre-degree}
 P\cdot F=K_{\F}\cdot F=2g-2.
\end{equation}

For $g\ge2$, set
\begin{equation*}
 \ell_g=\lcm\{1,2,\ldots,4g+2\}.
\end{equation*}

\begin{proposition}\label{prop:denominator}
Let $\F$ be induced by a fibration of genus $g\ge2$. On a relatively
minimal model, every maximal $\F$-chain has determinant at most $4g+2$.
Moreover, $\ell_gP$ and $\ell_gN$ are integral divisors, and
\[
 \ell_g\vol(\F)\in\ZZ_{\ge0}.
\]
\correction{The determinant bound is also sharp for foliations of general type.}
If the volume is positive, it is at least $1/\ell_g$.
\end{proposition}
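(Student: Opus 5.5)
The determinant bound is the only geometric input. Once it is proved, the integrality statements follow from \autoref{cor:negative-coefficients} and a short intersection computation. I would organize the proof in that order.

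\emph{Step 1: the determinant bound.} Let $f:X\to B$ be the first integral and $T=C_1+\cdots+C_r$ a maximal chain. Then $T$ lies in a fiber $F_0$, and I would compute fiber multiplicities along it. By \eqref{eq:chain-int}, $C_1$ carries a single singularity of $\F$. Hence $C_1$ meets no other fiber component except $C_2$ (or the tail $A$ of \autoref{lem:terminal} when $r=1$). From $F_0\cdot C_j=0$ one gets $m(C_{j+1})=e_jm(C_j)-m(C_{j-1})$, so
\[
 m(C_j)=m(C_1)\mu_j\quad\text{and}\quad m(A)=m(C_1)\mu_{r+1}.
\]
Thus $T$ is the Hirzebruch--Jung resolution of a cyclic quotient point of order $\mu_{r+1}$ sitting on the fiber component $A$, with Camacho--Sad data $-\mu_r/\mu_{r+1}$. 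I would then perform a local base change of order $m(A)$ around $f(F_0)$, normalize, and resolve. This should identify a neighbourhood of the contracted chain with the quotient of a smooth (or nodal) fibered germ by $\ZZ/\mu_{r+1}$, acting with an isolated fixed point on a fiber component. After semistable reduction, the group $\ZZ/\mu_{r+1}$ acts on a nodal curve of arithmetic genus $g$ with a fixed smooth point. Wiman's bound for the order of a cyclic automorphism then gives $\mu_{r+1}\le4g+2$.

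\emph{Main obstacle.} The difficulty is that the component through the fixed point may be rational, where cyclic automorphisms of any order exist. I would handle this by using the action on the whole stable fiber: the automorphism is determined by its action on the stable model, and the Wiman-type bound $4g+2$ holds for cyclic automorphisms of stable curves of genus $g\ge2$. What needs checking is that the local stabilizer at our point injects into this automorphism group. I expect this to be the delicate point. The alternative is a direct Pereira--Svaldi style index computation in the spirit of \cite[Proposition~5.3]{PS19}.

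\emph{Step 2: integrality.} By \eqref{eq:b-formulas} each coefficient $b_j=\xi_j/\mu_{r+1}$ has denominator dividing $\mu_{r+1}\le4g+2$, so $\ell_gN$ is integral, and hence so is $\ell_gP=\ell_gK_{\F}-\ell_gN$. For the volume, $P\cdot N=0$ gives $K_{\F}\cdot N=N^2$, and therefore
\[
 P^2=K_{\F}^2-N^2.
\]
Since $N\cdot C_j=-\delta_{j1}$ on each chain, $N^2=-\sum_Tb_{T,1}=-\sum_T\xi_{T,1}/\mu_{T,r+1}$. Each term has denominator at most $4g+2$, so $\ell_gP^2\in\ZZ$; it is non-negative because $P$ is nef. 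Consequently a positive volume is at least $1/\ell_g$.

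\emph{Step 3: sharpness.} For sharpness I would take a curve $\Sigma$ of genus $g$ with an automorphism $\sigma$ of order $4g+2$ (e.g. $y^2=x^{2g+1}-1$). Form $(\Sigma\times C)/\langle\sigma\times\tau\rangle$ for a curve $C$ of large genus with a suitable free-enough action $\tau$, and resolve. The foliation induced by the projection to $C/\langle\tau\rangle$ is of general type. A fixed point of $\sigma$ with stabilizer of full order then produces a maximal chain of determinant $4g+2$.
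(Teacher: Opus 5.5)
Your multiplicity recursion along the chain is correct. It gives $m_A=m(C_1)\mu_{r+1}$, hence $\mu_{r+1}\mid m_A$; the paper obtains the same divisibility in \autoref{lem:tail-multiplicity} from the Camacho--Sad index. Your Step~2 is also fine: $P^2=K_{\F}^2+\sum_Tb_{T,1}$ is equivalent to the paper's $\ell_gP^2=(\ell_gP)\cdot K_{\F}$.

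\textbf{The gap in Step~1.} The problem is the bound $\mu_{r+1}\le4g+2$ itself. Wiman's bound concerns smooth curves, and the ``Wiman-type bound for cyclic automorphisms of stable curves'' you invoke is false. For example, glue two elliptic curves with $j=0$ at their origins. Swapping the components and composing with an automorphism of order six gives an automorphism of order $12>10$ of a stable curve of genus two. At best, your base-change construction yields an automorphism of order $\mu_{r+1}$ of the component $\tilde A$ through the fixed point. When $\tilde A$ is rational, this says nothing about $g$. That is exactly the case you flag and leave open, and it is the heart of the proof.

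The missing input is global, and the paper supplies it in two steps.
\begin{enumerate}
\item Since $P$ is nef and $P\cdot F=2g-2$ by \eqref{eq:fibre-degree}, one has $m_A(P\cdot A)\le2g-2$.
\item A connectivity argument shows $P\cdot A>0$ unless $\mu_{r+1}=2$. Indeed, if $P\cdot A=0$, then \eqref{eq:orbifold} forces the whole fiber to be supported on $A$ and the chains meeting it, contradicting $P\cdot F>0$.
\end{enumerate}
Let $M$ be the least common multiple of the determinants of all chains meeting $A$. Then $M\mid m_A$, so $M(P\cdot A)\le2g-2$. Elementary estimates from \eqref{eq:orbifold} give $M\le4g$, except when $A\cong\PP^1$ and $\nu_A=0$. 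In that case, \eqref{eq:cs-integrality} makes the cyclic cover $w^M=\prod_i(z-z_i)^{M\mu_{i,r_i}/\mu_{i,r_i+1}}$ unramified at infinity. Riemann--Hurwitz then gives $0<2g(C)-2=M(P\cdot A)\le2g-2$. Wiman applied to the smooth curve $C$ yields $M\le4g(C)+2\le4g+2$; see \autoref{lem:lcm-tail}. This orbifold cover plays the role of your $\tilde A$. What forces its genus into $[2,g]$ is the positivity $P\cdot A>0$ together with the budget $m_A(P\cdot A)\le2g-2$, and nothing in your sketch provides these.

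\textbf{The failure of Step~3.} Your sharpness construction does not work, because the fibration $(\Sigma\times C)/\langle\sigma\times\tau\rangle\to C/\langle\tau\rangle$ is isotrivial. Isotrivial fibrations of genus at least two induce foliations of Kodaira dimension one, not two. Concretely, suppose the group acts faithfully on both factors. Then the quotient map is \'etale in codimension one, so the foliated canonical class of the quotient pulls back to $p_1^*K_\Sigma$ on $\Sigma\times C$, which has square zero. Hence $\vol(\F)=0$. The chain of determinant $4g+2$ you produce therefore does not witness sharpness among foliations of general type. The paper instead uses the non-isotrivial genus-$g$ fibration of L\"u--Tan \cite[Example~9.4]{LT24}, whose negative part contains a maximal chain of $4g+1$ $(-2)$-curves.
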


Let $i(\F)$ denote the least positive integer such that $i(\F)N$
is integral, equivalently such that $i(\F)P$ is integral.
Pereira--Svaldi \cite[Proposition~5.3]{PS19} prove
$i(\F)\le(42(2g-2))!$. We use the next two lemmas to prove the
stronger divisibility statement
\[
 i(\F)\mid\ell_g.
\]
Han--Jiao--Li--Liu \cite{HJLL24} prove a more general discreteness theorem
for foliations with fixed volume of a general leaf. The constant $4g+2$
is Wiman's classical bound \cite{Wim95}; the corresponding bound for local
monodromy is proved in \cite[Lemma~3.3]{LiuTan17}. The example at the end
of the proof shows that the determinant bound is sharp. We do not claim
that the volume bound $1/\ell_g$ is sharp.

\begin{lemma}\label{lem:tail-multiplicity}
Let $T=C_1+\cdots+C_r$ be a maximal $\F$-chain. The other separatrix
through $p_T$ belongs to a unique fiber component $A$ outside $\Supp(N)$.
If $m_A$ is its multiplicity in the fiber, then $\mu_{r+1}\mid m_A$.
Moreover, either $P\cdot A>0$ or $\mu_{r+1}=2$.
\end{lemma}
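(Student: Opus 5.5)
The plan is to use the fibration structure directly. Since $\F$ is induced by $f:X\to B$ and has no dicritical singularities, every $\F$-invariant curve is a fiber component. In particular the chain $T$ lies in a single fiber $F_0=f^*(f(T))$.

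\textbf{Step 1: the separatrix $A$.} By \autoref{lem:terminal}, the point $p_T\in C_r$ is a reduced non-degenerate singularity with exactly two smooth transverse separatrices, $C_r$ and a second branch $\gamma$. Near $p_T$, choose coordinates with $C_r=\{x=0\}$ and $\gamma=\{y=0\}$. The function $f-f(p_T)$ is a local first integral vanishing on $C_r$. Its zero set is $\F$-invariant, hence a union of separatrices through $p_T$. The local form of a first integral at a non-degenerate reduced singularity is $u\,x^ay^b$ with $a,b>0$, so this zero set contains $\gamma$ as well. Hence $\gamma$ lies in $F_0$. Because $\gamma$ is smooth, it lies on a unique irreducible fiber component $A$. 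This $A$ is not in $\Supp(N)$: the components of $\Supp(N)$ are disjoint maximal chains, and a curve through $p_T$ transverse to $C_r$ cannot belong to $T$ (by maximality and the chain structure) or to any other chain (since chains are disjoint). Thus $A$ is the invariant curve of \autoref{lem:terminal}.

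\textbf{Step 2: divisibility.} Write $F_0=\sum m_\Gamma\Gamma$, and let $\mathbf m=(m_{C_1},\dots,m_{C_r})^t$. Since $F_0\cdot C_j=0$ for every $j$, we get $M_T\mathbf m=\sum_{\Gamma\not\subset T}m_\Gamma(\Gamma\cdot C_j)_j$. The only fiber components outside $T$ that meet $T$ are invariant curves meeting it. By \autoref{lem:terminal}, there is only one such curve, $A$, and it meets $C_r$ transversely at $p_T$. So $M_T\mathbf m=m_A\mathbf e_r$. Then \autoref{lem:inverse}, via \eqref{eq:b-formulas}, gives
\[
m_{C_j}=m_A\frac{\mu_j}{\mu_{r+1}}.
\]
Taking $j=1$ and using $\mu_1=1$ gives $m_{C_1}=m_A/\mu_{r+1}\in\ZZ$, so $\mu_{r+1}\mid m_A$.

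\textbf{Step 3: the dichotomy.} Suppose $P\cdot A=0$. Then $A$ is a curve of the set $Z$ in \autoref{lem:zero-types} that lies outside $\Supp(N)$ but meets $\Supp(N)$ at $p_T$. Configurations of types (3) and (4) are disjoint from $\Supp(N)$, so $A$ lies in a chain of type (2) meeting $\Supp(N)$. That lemma then says each component of $\Supp(N)$ met by the chain is a single $(-2)$-curve. Hence $T=[-2]$, so $\mu_{r+1}=2$.

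The main obstacle is this last step. \autoref{lem:zero-types} is stated for foliations of general type, while the lemma itself is stated only for fibrations of genus $g\ge2$. I would first handle the general type case this way, where the lemma is needed for \autoref{prop:denominator}. If the full generality is required, I would try to extract the needed part of McQuillan's analysis directly: in a component of $Z$ that meets $\Supp(N)$ along a tail, integrality forces the attached chains to be $(-2)$-curves. A secondary point to check is the local first-integral normal form in Step 1. The non-degenerate reduced singularity here has eigenvalue ratio a negative rational number, and the existence of a holomorphic first integral linearizes it.
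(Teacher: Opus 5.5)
Your Steps 1 and 2 are correct, and Step 2 is a genuinely different route to the divisibility. The paper argues locally: it writes the fibration at $p_T$ as a unit times $x^ay^{m_A}$ and compares $\CS(\F,A,p_T)=-a/m_A$ with \autoref{lem:terminal}. Your global identity $M_T\mathbf m=m_A\mathbf e_r$ gives the same relation $a=m_{C_r}=m_A\mu_r/\mu_{r+1}$, and $m_{C_1}=m_A/\mu_{r+1}$ gives $\mu_{r+1}\mid m_A$ without any local normal form.

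The gap is in Step 3. This lemma sits in \autoref{sec:denominators}, where the only hypothesis is that $\F$ is induced by a fibration of genus $g\ge2$ on a relatively minimal model. \autoref{prop:denominator} explicitly allows $\vol(\F)=0$, for instance isotrivial fibrations, where $\Kod(\F)=1$. \autoref{lem:zero-types} is stated only for foliations of general type, so it does not reach this case.

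Your fallback idea does not work either. With $p_a(A)=\nu_A=0$, the condition $P\cdot A=0$ in \eqref{eq:orbifold} reads $\sum_i1/\mu_{i,r_i+1}=t-2$. Its solutions are $(3,3,3)$, $(2,4,4)$, $(2,3,6)$ and $(2,2,2,2)$. Each satisfies \eqref{eq:cs-integrality} for suitable numerators, e.g. $\frac13+\frac13+\frac13$, $\frac12+\frac14+\frac14$, $\frac12+\frac13+\frac16$. The corresponding chains, such as $[-3]$, $[-4]$, $[-6]$, are not single $(-2)$-curves, so integrality cannot exclude these configurations. You have to use the fibration itself.

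The paper's argument runs as follows.
\begin{itemize}
\item If $P\cdot A=0$, then \eqref{eq:orbifold} with $t\ge1$ forces $p_a(A)=0$ and $\nu_A\le1$.
\item If $\nu_A=1$, then $\sum_i(1-1/\mu_{i,r_i+1})=1$ with every term in $[\frac12,1)$. This forces $t=2$ with both determinants equal to $2$, so $\mu_{r+1}=2$.
\item If $\nu_A=0$, every singular point of $\F$ on $A$ or on the attached chains has both separatrices inside $A\cup\bigcup_iT_i$. Distinct invariant curves meet only at singular points, so no other fiber component meets this union. By connectedness the fiber is supported on it. Then $P\cdot F=0$, contradicting \eqref{eq:fibre-degree}.
\end{itemize}
This argument is self-contained, so it avoids McQuillan's classification even in the general-type case, where your Step 3 would otherwise be acceptable.
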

\begin{proof}
The other separatrix is algebraic because the foliation is induced by
$f$. It cannot belong to another maximal chain, since the maximal
chains in $\Supp(N)$ are disjoint. Thus it is a branch of a component
$A$ outside $\Supp(N)$.

Choose local coordinates at $p_T$ in which $C_r=(x=0)$ and $A=(y=0)$.
If their fiber multiplicities are $a$ and $m_A$, a local equation of the
fibration is a unit times $x^a y^{m_A}$. Comparing the Camacho--Sad
index along $A$ with \autoref{lem:terminal} gives
\[
 \frac{a}{m_A}=\frac{\mu_r}{\mu_{r+1}},\qquad
 \gcd(\mu_r,\mu_{r+1})=1.
\]
Hence $\mu_{r+1}\mid m_A$.

Suppose $P\cdot A=0$. Since $A$ meets a maximal chain,
\eqref{eq:orbifold} forces $p_a(A)=0$ and $\nu_A\le1$.
If $\nu_A=1$, the same formula implies that $A$ meets exactly two maximal
chains, both of determinant two. If $\nu_A=0$, the singularities on $A$
are precisely its intersections with the maximal chains meeting it. At each such point
the two separatrices are $A$ and the last component of the corresponding
chain. At each node of a chain, the two separatrices are its two
components through that point. Thus no other fiber component meets
their union. Connectedness forces the whole fiber to be supported on
$A$ and these chains. Thus $P$ has intersection zero with the whole
fiber, contrary to \eqref{eq:fibre-degree}. This proves the assertion.
\end{proof}

\begin{lemma}\label{lem:lcm-tail}
Let $A\not\subset\Supp(N)$ be a fiber component meeting $\Supp(N)$, and let
$T_i=C_{i,1}+\cdots+C_{i,r_i}$, $1\le i\le t$, be the maximal
$\F$-chains meeting $A$. Put
\[
 M=\lcm(\mu_{1,r_1+1},\ldots,\mu_{t,r_t+1}).
\]
Then $M\le4g+2$.
\end{lemma}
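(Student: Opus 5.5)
The plan is to read $M$ off the local topology of the fibration near the fiber $F_0\supset A$: I would show that $M$ is the order of a cyclic symmetry of a piece of the general fiber, and then apply a Wiman-type bound, following \cite[Lemma~3.3]{LiuTan17}.

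\textbf{Reductions.} By \autoref{lem:tail-multiplicity}, every $\mu_{i,r_i+1}$ divides the fiber multiplicity $m_A$. Hence $M\mid m_A$, and $M$ is the least common multiple of the orders of the cone points described below. If $P\cdot A=0$, the same lemma gives $M=2\le 4g+2$. So we may assume $P\cdot A>0$. Then \eqref{eq:orbifold} reads
\[
P\cdot A=2p_a(A)-2+\nu_A+\sum_{i=1}^t\Bigl(1-\frac1{\mu_{i,r_i+1}}\Bigr)>0 ,
\]
which says that a suitable orbifold $\mathcal O_A$ has negative orbifold Euler characteristic. This orbifold has underlying curve the normalization of $A$ (or $A$ with its $\nu_A$ further special points removed or marked). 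It has cone points of order $\mu_{i,r_i+1}$ at the points $A\cap T_i$.

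\textbf{Building the cyclic cover.} Take a small disc $\Delta\subset B$ around $f(F_0)$ and perform the base change $z=w^{m_A}$. Normalize, and look at the component $\widetilde A$ lying over $A$. Locally at $p_{T_i}$ the fibration is a unit times $x^ay^{m_A}$ with $a/m_A=\mu_{i,r_i}/\mu_{i,r_i+1}$ in lowest terms (\autoref{lem:tail-multiplicity}). So the cover $\widetilde A\to A$ is cyclic, with deck group $\ZZ/m_A$ acting through a quotient, and it has ramification index exactly $\mu_{i,r_i+1}$ over $p_{T_i}$. The remaining special points of $A$ give further branch or boundary data. Consequently, the effective cyclic group acting on $\widetilde A$ has order divisible by each $\mu_{i,r_i+1}$, hence by $M$, and $\widetilde A/G\cong\mathcal O_A$. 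Topologically, $\widetilde A$ (minus small discs at the $\nu_A$ extra points) embeds into a nearby smooth fiber $F_s$, and the monodromy restricts to it as this periodic map; this is the periodic piece in the Nielsen--Thurston decomposition of the local monodromy. Capping off the boundary circles equivariantly with discs produces a closed surface $\Sigma$ of genus $h\le g$. On $\Sigma$ a cyclic group of order $M'\ge M$ acts, and the quotient orbifold has at least the cone points of orders $\mu_{i,r_i+1}$. Capping only adds cone points or increases genus, so the quotient still has negative Euler characteristic.

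\textbf{Conclusion and main obstacle.} If $h\ge2$, Wiman's bound \cite{Wim95} gives $M\le M'\le 4h+2\le 4g+2$. The cases $h\le1$ need separate treatment, since tori and spheres carry cyclic actions of arbitrarily large order. Here I would apply Riemann--Hurwitz directly to the cyclic action. For $h=0$ a cyclic group has at most two fixed points, and for $h=1$ the possible orders are $\le 6$ with specific quotient signatures. Such quotients have Euler characteristic $\ge 0$, which contradicts negativity unless the signature is small, and in all surviving cases $M\le 6\le 4g+2$. The main obstacle is the middle step. One must make precise that the periodic piece over $A$ really embeds in $F_s$ with genus at most $g$, and that capping preserves the cone orders $\mu_{i,r_i+1}$ in the quotient. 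This requires care when $A$ is singular, i.e.\ when $\nu_A>0$, or meets other fiber components. I would first try to handle it via the $x^ay^{m_A}$ local normal forms and the standard description of the Milnor fiber of a normal crossing fiber. If that becomes unwieldy, I would instead quote the monodromy-order bound of \cite[Lemma~3.3]{LiuTan17} for the periodic part directly.
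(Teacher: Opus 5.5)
There is a genuine gap in the step ``capping only adds cone points or increases genus, so the quotient still has negative Euler characteristic.'' Part of the positivity of $P\cdot A$ in \eqref{eq:orbifold} comes from the terms $2p_a(A)-2$ and $\nu_A$, and these need not produce any branching of $\widetilde A\to A$. A node of $A$ raises $p_a(A)$ without raising the genus of the normalization. At a point where $A$ meets another fiber component whose multiplicity is divisible by $m_A$, the cover is unramified. Replacing a puncture by a disc raises the Euler characteristic, so negativity is not inherited by $\Sigma/G$.

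For instance, take the local configuration of a smooth rational $A$ meeting two chains of determinant $n\ge3$ with $\mu_{1,r_1}+\mu_{2,r_2}=n$, and one further component of multiplicity $n=m_A$. The Camacho--Sad data are consistent, with $A^2=-2$. Then $\nu_A=1$ and $P\cdot A=1-2/n>0$. But $\widetilde A\to A$ is $w^n=z$, so $\Sigma\cong\PP^1$ with quotient signature $(0;n,n)$ and Euler characteristic $2/n>0$. This case survives your $h=0$ analysis, nothing in the proposal bounds $M=n$, and the claim that all surviving cases have $M\le 6$ is false.

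The missing ingredient is the budget inequality from nefness of $P$. Since $F-m_AA$ is effective, $m_A(P\cdot A)\le P\cdot F=2g-2$ by \eqref{eq:fibre-degree}, and hence $M(P\cdot A)\le 2g-2$; in the example this gives $n\le 2g$. The paper uses this inequality throughout:
\begin{enumerate}
\item It gives $M\le 4g-4$ when $P\cdot A\ge 1/2$.
\item It gives $M\le 4g$ in the only remaining case with $\nu_A\ge1$, namely $\nu_A=1$ and $t=2$, after an elementary inequality.
\item When $p_a(A)=\nu_A=0$, the orbifold data of $A$ coincide exactly with the branch data of the explicit cyclic cover $w^M=\prod_i(z-z_i)^{M\mu_{i,r_i}/\mu_{i,r_i+1}}$, with no branching at infinity by \eqref{eq:cs-integrality}. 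Riemann--Hurwitz then gives $2g(C)-2=M(P\cdot A)\in(0,2g-2]$, i.e.\ $2\le g(C)\le g$, and Wiman's bound applies.
\end{enumerate}
This also makes your unproved step unnecessary, namely embedding the periodic piece into a nearby fiber to obtain $h\le g$.
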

\begin{proof}
By \autoref{lem:tail-multiplicity}, $M\mid m_A$.
If $P\cdot A=0$, all the determinants are two, so the assertion holds.
Otherwise
\begin{equation}\label{eq:tail-budget}
 M(P\cdot A)\le m_A(P\cdot A)\le2g-2.
\end{equation}
If $P\cdot A\ge1/2$, this gives $M\le4g-4$.

Suppose $0<P\cdot A<1/2$. Formula~\eqref{eq:orbifold} forces $p_a(A)=0$.
If $\nu_A\ge1$, the only case is $\nu_A=1$, $t=2$. We may assume
$\mu_{1,r_1+1}\le\mu_{2,r_2+1}$. The elementary inequality
\begin{equation*}
 M\left(1-\frac1{\mu_{1,r_1+1}}-\frac1{\mu_{2,r_2+1}}\right)
 \ge\frac M2-2
\end{equation*}
gives $M\le4g$. To check it, if the first determinant is two, use
$M/\mu_{2,r_2+1}\le2$. If the first is at least four, or if it is
three and the second is at least six, use that their reciprocal sum is
at most $1/2$. \correction{The inequality can be verified directly for the remaining pairs $(3,3),(3,4),(3,5)$.}

It remains to consider $\nu_A=0$. All singularities on the smooth
rational curve $A$ are intersections with the chains, so
\eqref{eq:cs-integrality} applies. Choose a coordinate $z$ on
$A\simeq\PP^1$ such that these points have finite coordinates
$z_1,\ldots,z_t$. The equation
\begin{equation*}
 w^M=\prod_{i=1}^t(z-z_i)^{M\mu_{i,r_i}/\mu_{i,r_i+1}}
\end{equation*}
defines, after normalization, a connected cyclic cover $C\to\PP^1$
of degree $M$.
Indeed, for each prime $p\mid M$, choose a determinant
$\mu_{i,r_i+1}$ with the same $p$-adic valuation as $M$.
The corresponding exponent is not divisible by $p$, since
$\gcd(\mu_{i,r_i},\mu_{i,r_i+1})=1$.
There is no ramification at infinity by \eqref{eq:cs-integrality}.
The ramification indices are $\mu_{i,r_i+1}$, so Riemann--Hurwitz gives
\[
 2g(C)-2
 =M\left(-2+\sum_i\left(1-\frac1{\mu_{i,r_i+1}}\right)\right)
 =M(P\cdot A).
\]
Positivity and \eqref{eq:tail-budget} imply $2\le g(C)\le g$.
Wiman's bound \cite{Wim95} for the order of an automorphism of a
curve of genus at least two therefore gives $M\le4g(C)+2\le4g+2$.
\end{proof}

\begin{proof}[Proof of \autoref{prop:denominator}]
Let $T=C_1+\cdots+C_r$ be a maximal $\F$-chain. By
\autoref{lem:tail-multiplicity}, its last component meets a fiber
component $A$ outside $\Supp(N)$. Let $M$ be the least common multiple
of the determinants of all maximal chains meeting $A$. Since
$\det M_T=\mu_{r+1}$ divides $M$, \autoref{lem:lcm-tail} gives
\begin{equation}\label{eq:chain-det-bound}
 \det M_T=\mu_{r+1}\le M\le4g+2.
\end{equation}
In particular, $\det M_T$ divides $\ell_g$.

By \autoref{cor:negative-coefficients}, every coefficient of $N_T$
has denominator dividing $\det M_T$, and the coefficient of $C_r$ is
$1/\det M_T$. Thus, for a positive integer $m$, the divisor $mN_T$
is integral if and only if $\det M_T$ divides $m$. Since the maximal
chains are disjoint, it follows that
\[
 i(\F)=\lcm_T\{\det M_T\}\mid\ell_g,
\]
where the least common multiple is $1$ when $N=0$.
Consequently, $\ell_gN$ is integral, and so is
$\ell_gP=\ell_gK_{\F}-\ell_gN$. The equality $P\cdot N=0$ now gives
\begin{equation*}
 \ell_g\vol(\F)=\ell_gP^2=(\ell_gP)\cdot K_{\F}\in\ZZ_{\ge0}.
\end{equation*}
For a positive volume this integer is at least one.

For sharpness of \eqref{eq:chain-det-bound}, the genus-$g$ fibration inducing a foliation of general type in L\"u--Tan \cite[Example~9.4]{LT24} has in its negative part a maximal chain of $4g+1$ consecutive $(-2)$-curves, with coefficients $(4g+2-i)/(4g+2)$. Its determinant is $4g+2$.
\end{proof}

If the genus $g$ of a general leaf satisfies $2\le g\le G$, then
$\ell_g\mid\ell_G$ and the same proof gives
\begin{equation}\label{eq:bounded-genus-lattice}
 \vol(\F)\in\ell_G^{-1}\ZZ_{\ge0}.
\end{equation}
In particular, there are only finitely many such volumes in a bounded interval.

\subsection{Finiteness of small volumes}\label{sec:geography}

The reduction to the Albanese fibration is essential: its genus is an invariant of the surface, whereas genera of arbitrary pencils on a fixed surface need not be bounded.

Let $S_0$ be the smooth minimal model of a surface $S$ of general type. Then $\vol(S)=K_{S_0}^2$ and $\chi(\OO_S)=\chi(\OO_{S_0})$. The Noether and Bogomolov--Miyaoka--Yau inequalities give
\[
 2\chi(\OO_S)-6\le\vol(S)\le9\chi(\OO_S).
\]
Consequently, bounded surface volume and bounded holomorphic Euler characteristic define the same boundedness condition, after changing the bound. In particular, $\vol(S)\le c$ implies
\[
 \chi(\OO_S)\le C(c):=\left\lfloor\frac{c+6}{2}\right\rfloor.
\]
We use the Euler characteristic formulation in the following lemma and then apply it with $C(c)$ to prove the volume formulation in the introduction; see also \cite{BHPV04}.
The next lemma is essentially due to Catanese \cite{Cat15}.

\begin{lemma}\label{lem:albanese-genus}
For every positive integer $c$, there is an integer $G(c)\ge2$ with
the following property. If $S$ is a smooth minimal surface of general
type with $\chi(\OO_S)\le c$ and its Albanese image is a curve, then
the genus $g\ge2$ of its Albanese fibration satisfies $g\le G(c)$.
\end{lemma}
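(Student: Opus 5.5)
The plan is a boundedness argument in the moduli of minimal surfaces of general type, combined with deformation invariance of the Albanese fibration genus. Fix $c$. Minimal surfaces of general type with $\chi(\OO_S)\le c$ satisfy $K_S^2\le 9c$ by Bogomolov--Miyaoka--Yau, so the pairs $(K_S^2,\chi(\OO_S))$ range over a finite set. By Gieseker's boundedness (or, more elementarily, since $|5K_S|$ gives a birational morphism onto its image in a projective space of bounded dimension, with image of bounded degree), all such surfaces appear as fibers of finitely many smooth families $\mathcal S\to U$ over irreducible bases $U$. This uses Bombieri's theorem together with a Hilbert scheme argument: the canonical models form finitely many flat families, and after stratifying and taking simultaneous resolutions (or working directly with minimal models over a finite stratification), we get finitely many families of minimal models. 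It is therefore enough to bound the Albanese fiber genus within a single connected family.

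Within one such family, the irregularity $q$ is constant, since $q=h^{0,1}$ is a topological invariant ($b_1=2q$). The condition that the Albanese image is a curve is also constant on connected components. Indeed, the Albanese dimension is detected topologically: the image is a curve exactly when the cup product $\wedge^2 H^1(S,\CC)\to H^2(S,\CC)$ restricted to $H^{1,0}$ vanishes, i.e. $\omega\wedge\omega'=0$ for all holomorphic one-forms. By Castelnuovo--de Franchis this gives a fibration of genus $q$. The fiber class $F$ of the Albanese fibration is determined topologically, as the Poincaré dual of the pullback of the fundamental class of $B$ under the map induced on cohomology by $H^1(B)\hookrightarrow H^1(S)$. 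Since $H^1(S,\ZZ)$ is locally constant, the class $F$ varies in a locally constant way, as does $K_S\cdot F=2g-2$. Hence $g$ is locally constant on each connected base. Finitely many families with finitely many components then give a uniform bound $G(c)$.

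The step I expect to be hardest is the rigorous deformation invariance of $g$, which is the content attributed to Catanese \cite{Cat15}. The cleanest route is to cite his result that the Albanese fibration (a fibration onto a curve of genus $\ge 2$, when $q\ge 2$) is a topological invariant, determined by $\pi_1$ or by an isotropic subspace of $H^1$. When $q=1$, the Albanese pencil is onto an elliptic curve. Here I would use instead that $F$ is the pullback of a point class under a map to a complex torus whose homotopy class is fixed by $H^1(S,\ZZ)$, so $[F]\in H^2(S,\ZZ)$ is still locally constant and $2g-2=K_S\cdot F$ is constant. As a fallback that avoids families altogether, I would try a direct bound: $2g-2=K_S\cdot F$ with $F^2=0$, and bound $K_S\cdot F$ using $K_S^2\le 9c$ and the structure of the fibration. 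This fails in general, however, since Hodge index gives the wrong direction, which is why I would rely on boundedness plus topological invariance.
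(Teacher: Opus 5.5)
Your proposal is correct and follows essentially the paper's route: finitely many pairs $(\chi,K^2)$ and hence finitely many deformation types, a topological determination of the Albanese fiber class from cup products on $H^1$, and then $K_S\cdot F=2g-2$ to read off $g$. The paper makes your ``determined topologically'' step precise and uniform in $q$. It proves $f^*H^1(B,\ZZ)=H^1(S,\ZZ)$ from the surjectivity of $\pi_1(S)\to\pi_1(B)$ and a rank count, so the image of $\wedge^2H^1(S,\ZZ)\to H^2(S,\ZZ)/\mathrm{tors}$ is exactly $\ZZ[F]$; this integrality is what pins down $[F]$ itself rather than only the line $\QQ[F]$, and it also makes your separate claim that ``Albanese image is a curve'' is deformation invariant unnecessary.
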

\begin{proof}
The classical inequalities for minimal surfaces of general type give
\[
 1\le\chi(\OO_S)\le c,\qquad 1\le K_S^2\le9\chi(\OO_S)\le9c.
\]
\mathcorrection{There are therefore only finitely many pairs
$(\chi(\OO_S),K_S^2)$. For each pair, the Gieseker moduli space of
canonical models is of finite type, hence has finitely many connected
components. Deformation equivalence of canonical models agrees with
deformation equivalence of their smooth minimal resolutions
\cite[Theorems~166 and~167]{Cat15}. Thus only finitely many smooth
deformation types occur.} It suffices to show
that the Albanese fiber genus, whenever the Albanese image is a curve,
is determined by the integral cohomology ring and the canonical class.

Let $f:S\to B$ be the Albanese fibration, with general fiber $F$.
Since $f$ has connected fibers, the induced map
$\pi_1(S)\to\pi_1(B)$ is surjective.
It follows that the map on first homology is
surjective, and remains surjective after quotienting by torsion.
Since $H_1(B,\ZZ)$ is free, this surjection splits as a map of
abelian groups. Dualizing shows that $f^*H^1(B,\ZZ)$ is a primitive
sublattice of $H^1(S,\ZZ)$. Since $f$ is the Stein factorization of the Albanese map, every holomorphic one-form on $S$ is pulled back from $B$. Conversely, pullback of one-forms is injective, so $q(S)=g(B)$.
Both lattices therefore have rank $2q(S)=2g(B)$,
so
\[
 f^*:H^1(B,\ZZ)\xrightarrow{\ \sim\ }H^1(S,\ZZ).
\]
The cup product on a curve of positive genus surjects onto
$H^2(B,\ZZ)$. Consequently,
\begin{equation*}
 \operatorname{im}\!\left(
 \bigwedge\nolimits^2H^1(S,\ZZ)\xrightarrow{\ \smile\ }
 H^2(S,\ZZ)/\mathrm{tors}
 \right)=\ZZ[F].
\end{equation*}
The equality concerns the image subgroup itself and does not require
$[F]$ to be primitive. In particular, the argument also applies in the
presence of multiple fibers.
The integral cohomology ring determines the generator $[F]$ up to
sign. Since $F^2=0$, adjunction gives
\[
 K_S\cdot F=2g-2>0,
\]
which determines the sign and the genus $g$. In a smooth deformation
both the integral cohomology ring and the canonical class are preserved.
The genus is therefore constant for surfaces in the same deformation
class whose Albanese images are curves. Taking the maximum over the finitely
many deformation types proves the lemma.
\end{proof}

\begin{proof}[Proof of \autoref{thm:finite-intro}]
It is enough to consider foliations with $0<\vol(\F)<1/6$.
By \autoref{cor:alb-intro}, the foliation is induced by the Albanese fibration of $S$.
As $\F$ is of general type, the Albanese fibration is non-isotrivial of genus at least two.
In particular, the underlying surface $S$ is also of general
type. Passing
to the minimal model $S_0$ preserves the surface volume, the Euler characteristic, and the general Albanese fiber. Since $\vol(S)\le c$, the preceding inequalities give $\chi(\OO_{S_0})\le C(c)$. By \autoref{lem:albanese-genus}, the fiber genus is at most $G_c:=G(C(c))$.

According to \eqref{eq:bounded-genus-lattice}, every volume in $(0,1/6)$ belongs to
the set
\begin{equation*}
 \frac1{\ell_{G_c}}\ZZ\ \cap\ (0,1/6),
\end{equation*}
which is finite. Since $G_c\ge2$, one has $1/\ell_{G_c}\le1/6$.
Foliations of volume at least $1/6$ satisfy this bound as well. Thus
\[
 \varepsilon_c=\frac1{\ell_{G_c}}.\qedhere
\]

\end{proof}


\begin{remark}
	The above proof gives the existence of $G_c$ but does not give an explicit bound.
	Under the additional hypotheses considered by Ling--L\"u in
	\cite[Theorem~1.2]{LLlow} and \cite[Theorem~1.2]{LLhyp}, the genus $g$
	of the Albanese fiber is bounded explicitly in terms of $\chi(\OO_S)$.
	Combining their bounds with \autoref{prop:denominator} and
	\autoref{cor:alb-intro} therefore gives an explicit positive lower
	bound for $\vol(\F)$ in terms of $\chi(\OO_S)$ under those hypotheses.
\end{remark}

\subsection{A uniform lower bound problem}\label{sec:remaining}

For volumes strictly between zero and $1/6$, \autoref{cor:alb-intro}
reduces the irregular case to non-isotrivial Albanese fibrations.
In fact, a uniform positive lower bound for foliations of general type
on irregular surfaces exists if and only if such a bound exists for
algebraically integrable foliations of general type on smooth projective
surfaces. Here no bound on the surface volume, equivalently on $\chi(\OO_S)$ for surfaces of general type, is imposed.
Indeed, a lower bound $\varepsilon_{\rm alg}>0$ in the algebraically
integrable case gives the lower bound
$\min\{1/6,\varepsilon_{\rm alg}\}$ on irregular surfaces.
Conversely, suppose $\varepsilon_{\rm irr}>0$ bounds the positive volumes
on all irregular surfaces. Resolve the first integral of an algebraically
integrable foliation to a fibration $f:X\to B$ with reduced induced
foliation. If $g(B)>0$, the surface is already irregular. If
$B=\PP^1$, take a double cover $E\to\PP^1$ branched at four smooth fiber
values of $f$. Then $E$ is elliptic, and the fiber product $X_E$ is smooth
and irregular. The induced foliation $\F_E$ remains reduced, and, for
$\Pi:X_E\to X$,
\[
 K_{\F_E}=\Pi^*K_{\F},\qquad
 \vol(\F_E)=2\vol(\F).
\]
The first equality follows because the ramification is along smooth
invariant fibers.
Consequently,
$\vol(\F)\ge\varepsilon_{\rm irr}/2$, which proves the equivalence.

The part not covered by this reduction consists of transcendental
foliations on surfaces with $q(S)=0$. Rational surfaces form a natural
first class to consider.

The family in \autoref{thm:plane-family} in the next section also leaves open the questions
of which volume values in $(1/6,1/2)$ occur for foliations with positive
geometric genus and whether a positive accumulation point below $1/2$
can occur in this class.
When $\vol(\F)=1/6$ and $p_g(\F)>0$,
one may ask which configurations of invariant curves $C$ with $P\cdot C=0$
can occur and which values of the genus and self-intersection of the
transverse curve are possible.

\section{Examples}\label{sec:examples}

In this section, we construct two examples reaching the equality in \autoref{thm:lower-bound}.
The two examples also realize both cases of
\autoref{thm:equality-intro}. \autoref{thm:plane-family} also shows
that $1/2$ is an accumulation point of volumes of degree-two foliations
with geometric genus $p_g(\F)=1$.
The case $n=2$ realizes the transverse equality configuration, while \autoref{ex:invariant-canonical} realizes the invariant configuration of \autoref{thm:equality-core}.

\begin{example}\label{thm:plane-family}\label{sec:plane-family}
For every integer $n\ge2$, we construct a transcendental foliation
$\F_n$ of degree two on $\PP^2$ such that
\begin{equation*}
 p_g(\F_n)=1,\qquad
 \vol(\F_n)=\frac{n-1}{2(n+1)}.
\end{equation*}
\end{example}

For each $n\ge2$, choose an irrational real number $\alpha$ satisfying
\begin{equation*}
 1<\alpha<\frac{n}{n-1}.
\end{equation*}
Let $\F_n$ be the foliation defined on the affine chart $Z=1$ by
\begin{equation*}
 V=x(1-nx+\alpha y)\frac{\partial}{\partial x}
   +y\bigl(2-(n-1)x+y\bigr)\frac{\partial}{\partial y}.
\end{equation*}
The corresponding homogeneous one-form is
\begin{equation*}
\begin{split}
 \Omega={}&ZY\bigl(2Z-(n-1)X+Y\bigr)dX
 -ZX(Z-nX+\alpha Y)dY\\
 &+XY\bigl(-Z-X+(\alpha-1)Y\bigr)dZ.
\end{split}
\end{equation*}
\correction{Its coefficients have degree three, have no common factor, and satisfy
the Euler relation.} Thus $\F_n$ has degree two and canonical bundle
$\OO_{\PP^2}(1)$ on the plane. The three coordinate lines are invariant.

The only non-reduced singularities are $O=(0,0)$ and $U=[1:0:0]$,
with eigenvalues $(1,2)$ and $(1,n)$, respectively.
Indeed, the other affine singularities are
\[
 R=(1/n,0),\qquad T=(0,-2),\qquad Q=(x_Q,y_Q),
\]
where
\[
 x_Q=\frac{2\alpha-1}{(n-1)\alpha-n}<0,
 \qquad y_Q=(n-1)x_Q-2<0.
\]
The eigenvalues at $R$ and $T$ are $(-1,(n+1)/n)$ and
$(1-2\alpha,-2)$, respectively. At $Q$, the determinant of the
linear part is $x_Qy_Q\bigl((n-1)\alpha-n\bigr)<0$.
The other singularities at infinity are
\[
 W=[0:1:0],\qquad J=[1:1/(\alpha-1):0],
\]
with eigenvalues, up to a common non-zero factor,
\[
 (\alpha-1,-1),\qquad
 \left(-1,n-\frac{\alpha}{\alpha-1}\right),
\]
respectively. Hence these five singularities are reduced.
The irrational ratio of the eigenvalues at $W$ also shows that $\F_n$
is transcendental, since a reduced foliation induced by a fibration
has negative rational eigenvalue ratios at its singularities.

At both $O$ and $U$, the two coordinate lines are smooth transverse
invariant branches. 
Thus both singularities are analytically linearizable. Indeed, after scaling the vector field, a nonlinear resonant Poincar\'e--Dulac normal form with eigenvalues $(1,k)$, $k\ge2$, has the form
\[
x\partial_x+(ky+cx^k)\partial_y,\qquad c\ne0,
\]
and has only one formal separatrix. This contradicts the existence of the two smooth transverse invariant branches.
Their reductions require two and $n$ blow-ups, respectively.
In each sequence the last exceptional curve is non-invariant and the
preceding exceptional curves are invariant. Let $X$ be the resulting
reduced surface, and use $\F_n$ also for the induced foliation on $X$.
Write $H$ for the pullback of a line, and
\[
 e_1,e_2\quad\text{over }O,\qquad
 f_1,\ldots,f_n\quad\text{over }U
\]
for the total transforms of the exceptional divisors. The intersection
matrix in this basis is diagonal, with $H^2=1$ and $e_i^2=f_j^2=-1$.
By the transformation formula for the canonical divisor
\cite[Section~2.2]{LT24},
\begin{equation*}
 K_{\F_n}\sim H-e_2-f_n.
\end{equation*}
A linear form defines a canonical section on $X$ precisely when it
vanishes at $O$ and $U$. Thus the space of canonical sections is
$\CC\cdot Y$, and $p_g(\F_n)=1$.

Set
\[
 A=e_2,\qquad G=e_1-e_2,\qquad
 J_j=f_j-f_{j+1}\quad(1\le j<n).
\]
The strict transform $L$ of $Y=0$ has class $H-e_1-e_2-f_1$.
Consequently, the unique effective canonical divisor is
\begin{equation*}
 D=L+G+A+\sum_{j=1}^{n-1}J_j.
\end{equation*}
\correction{The curve $A$ is a smooth rational curve with $A^2=-1$ and is everywhere
transverse to $\F_n$.} It meets $G$ and $L$ once each.
The curves $G$ and $J_{n-1}+\cdots+J_1+L$ form two disjoint invariant
chains of $(-2)$-curves, with determinants $2$ and $n+1$.
The Zariski decomposition of $D$ is $D=P+N$, where
\begin{align*}
 N&=\frac12G+\frac1{n+1}L
       +\sum_{j=1}^{n-1}\frac{j+1}{n+1}J_j,\\
 P&=A+\frac12G+\frac n{n+1}L
       +\sum_{j=1}^{n-1}\frac{n-j}{n+1}J_j.
\end{align*}
Indeed, the support of $N$ has negative definite intersection matrix,
and direct computation gives
\[
 P\cdot G=P\cdot L=P\cdot J_j=0,\qquad
 P\cdot A=-1+\frac12+\frac n{n+1}
          =\frac{n-1}{2(n+1)}>0.
\]
Since $P$ is effective, it has non-negative intersection with every
irreducible curve outside its support. Thus $P$ is nef, and
\[
 \vol(\F_n)=P^2=P\cdot D=P\cdot A=\frac{n-1}{2(n+1)}.
\]
The other non-invariant exceptional curve $f_n$ meets $J_{n-1}$ once
and no other component of $D$. Therefore
\begin{equation*}
 P\cdot f_n=\frac1{n+1}.
\end{equation*}

These configurations are preserved on a relatively minimal model.
Indeed, an $\F_n$-exceptional curve $E$ with $K_{\F_n}\cdot E=-1$
would be a component of $N$, whose components all have self-intersection
$-2$. Hence $K_{\F_n}\cdot E=0$, and nefness gives
$P\cdot E=N\cdot E=0$. The curve $E$ cannot be a component of $P$:
the only component outside $N$ is the transverse curve $A$, whereas
$E$ is invariant. Since every coefficient of $P$ on its support is
positive, $E$ is disjoint from $\Supp(P)$. The same argument applies
to each subsequent contraction. Both $A$ and $f_n$ survive, and their
intersections with the nef part are preserved by the projection formula.
For $n=2$, the divisor $D'=A$ gives case~\textup{(ii)} of
\autoref{thm:equality-core} with $k=1$ and volume $1/6$.
As $n\to\infty$, the volumes increase to $1/2$.

\begin{example}\label{ex:invariant-canonical}
We construct a transcendental foliation $\F_{1/6}$ of degree two on
$\PP^2$ with $p_g(\F_{1/6})=1$ and $\vol(\F_{1/6})=1/6$, whose
effective canonical divisor on a reduced model is invariant.
This gives case~\textup{(i)} of \autoref{thm:equality-core}.
\end{example}

Consider the homogeneous one-form
\begin{equation*}
 \Omega=X^2Y\,dX-(X^3+XYZ+YZ^2)\,dY+Y^2(X+Z)\,dZ.
\end{equation*}
Its coefficients have no common factor and satisfy the Euler relation,
so it defines a foliation $\F_{1/6}$ of degree two. On $Z=1$, it is
generated by
\begin{equation*}
 V_0=\bigl(x^3+(1+x)y\bigr)\frac\partial{\partial x}
      +x^2y\frac\partial{\partial y}.
\end{equation*}
The only singularities are $O=[0:0:1]$ and $P_\infty=[0:1:0]$.
The linear part at $O$ is $y\frac{\partial}{\partial x}$.
At $P_\infty$, with $u=X/Y$ and $v=Z/Y$, a vector field is
$(u+v)\frac{\partial}{\partial u}-u^2\frac{\partial}{\partial v}$.
Thus $P_\infty$ is a reduced saddle-node.

The line $L=(y=0)$ is invariant. Blow up $O$, and then twice at the
intersection of the most recent exceptional curve with the strict
transform of $L$. In the successive charts $y=xt$, $t=xs$, and $s=xu$,
the induced foliations are generated by
\begin{align}
 V_1&=x\bigl(t(1+x)+x^2\bigr)\frac{\partial}{\partial x}
          -t^2(1+x)\frac{\partial}{\partial t},\notag\\
 V_2&=x(s+x+sx)\frac{\partial}{\partial x}
          -s(2s+x+2sx)\frac{\partial}{\partial s},\notag\\
 \label{eq:V3}
 V_3&=x(u+1+ux)\frac{\partial}{\partial x}
          -u(3u+2+3ux)\frac{\partial}{\partial u}.
\end{align}
All three exceptional curves are invariant, and the vanishing orders
of the vector fields at the three centers are $1,2,2$.
Let $S_3$ be the resulting surface and $\F_3$ the induced foliation.
The transformation formula for the canonical divisor
\cite[Section~2.2]{LT24} gives
\begin{equation*}
 K_{\F_3}\sim H-e_2-e_3,
\end{equation*}
where $H$ is the pullback of a line and $e_i$ denotes the total transform
of the $i$-th exceptional divisor. Set
\begin{equation*}
 C_1=e_1-e_2,\quad C_2=e_2-e_3,\quad A=e_3,\quad
 \widetilde L=H-e_1-e_2-e_3.
\end{equation*}
These curves form the chain $C_1-C_2-A-\widetilde L$, with
self-intersection numbers $-2,-2,-1,-2$, respectively.
The singularities at $C_1\cap C_2$, $C_2\cap A$ and
$A\cap\widetilde L$ have eigenvalues $(-1,2)$, $(-2,3)$ and $(1,-2)$.
The only remaining non-reduced point is
\[
 q:\quad x=0,\quad u=-\frac23.
\]

We check that $q$ has nonlinear Poincar\'e--Dulac normal form, so that
its further reduction is crepant. Put $z=u+2/3$. Equation~\eqref{eq:V3}
becomes
\begin{equation*}
 \dot x=\frac x3(3xz-2x+3z+1),\qquad
 \dot z=-\frac{3z-2}{3}(3xz-2x+3z).
\end{equation*}
The eigenvalues are $1/3$ and $2$. A second formal separatrix
transverse to $A=(x=0)$ would have the form
$z=\phi(x)=\sum_{j\ge1}a_jx^j$ and satisfy
\begin{equation*}
 -(3\phi-2)(3x\phi-2x+3\phi)
 =x\phi'(x)(3x\phi-2x+3\phi+1).
\end{equation*}
Comparing coefficients through degree five gives
\[
 a_1=\frac45,\quad a_2=-\frac{22}{25},\quad
 a_3=\frac{344}{375},\quad a_4=-\frac{118}{125},\quad
 a_5=\frac{2972}{3125}.
\]
The equation at degree six is $128/3125=0$, a contradiction.
Thus the normal form is nonlinear.
\begingroup
After scaling the vector field and making a local analytic change of
coordinates, its Poincar\'e--Dulac normal form is
\[
 W_6=x\frac{\partial}{\partial x}
       +(6y+cx^6)\frac{\partial}{\partial y},\qquad c\ne0.
\]
For $n\ge2$, blowing up the non-reduced point of
$W_n=x\partial_x+(ny+cx^n)\partial_y$ in the chart $y=xv$ gives
\[
 W_{n-1}=x\frac{\partial}{\partial x}
       +\bigl((n-1)v+cx^{n-1}\bigr)\frac{\partial}{\partial v}.
\]
The other singularity on the exceptional curve is reduced, with
eigenvalues $n$ and $1-n$. After five such blow-ups, the remaining
non-reduced point has generator
$W_1=x\partial_x+(y+cx)\partial_y$.
Blowing it up in the chart $x=uv$, $y=v$ gives
\[
 -cu^2\frac{\partial}{\partial u}
       +v(1+cu)\frac{\partial}{\partial v},
\]
which has a reduced saddle-node at $(0,0)$; the other chart introduces
no further non-reduced point. At all six centers the saturated vector
field has vanishing order one, and every new exceptional curve is
invariant. The canonical blow-up formula
$K_{\F'}=\sigma^*K_{\F}+(1-\operatorname{ord}_pW)E$
for an invariant exceptional curve therefore has discrepancy zero
at each step; see also \cite[Section~2.2]{LT24}.
\endgroup
Consequently, the reduction $\rho:Y\to S_3$ satisfies
\begin{equation}\label{eq:crepant-reduction}
 \mathcorrection{K_{\F_Y}=\rho^*K_{\F_3}.}
\end{equation}
The saddle-node also shows that the foliation is transcendental.

A section of $\OO_{\PP^2}(1)$ is represented on $Z=1$ by $ax+by+c$.
To give a section of $K_{\F_3}$, it must satisfy $c=0$ \correction{at the second
blow-up.} After division by a local equation of the exceptional
divisor, the section becomes $a+bxs$. \correction{The third blow-up then}
requires $a=0$. Consequently,
\[
 H^0(S_3,K_{\F_3})=\CC\cdot y,
\]
and \eqref{eq:crepant-reduction} gives $p_g(\F_{1/6})=1$.
The unique effective canonical divisor on $S_3$ is
\begin{equation*}
 D=\widetilde L+C_1+C_2+A.
\end{equation*}
Its Zariski decomposition is $D=P+N$, where
\begin{align*}
 N&=\frac23C_1+\frac13C_2+\frac12\widetilde L,\\
 P&=\frac13C_1+\frac23C_2+\frac12\widetilde L+A
    =\frac16(3H-e_1-e_2-e_3).
\end{align*}
Indeed, the intersection matrix of $\Supp(N)$ is negative definite,
and
\[
 P\cdot C_1=P\cdot C_2=P\cdot\widetilde L=0,\qquad
 P\cdot A=\frac16.
\]
Since $P$ is effective, these equalities imply that it is nef.
The crepant reduction pulls back the Zariski decomposition, and hence
\begin{equation*}
 \vol(\F_{1/6})=P^2=\frac16.
\end{equation*}
The strict transform of $A$ has intersection $1/6$ with $\rho^*P$.
All components of $D$ and all exceptional curves of $\rho$ are invariant,
so the effective canonical divisor on $Y$ is invariant as required.

\end{document}